\newtheorem{theorem}{Theorem}[section]
\newtheorem{lemma}[theorem]{Lemma}
\newtheorem{proposition}[theorem]{Proposition}
\newtheorem{corollary}[theorem]{Corollary}
\theoremstyle{definition}
\newtheorem{example}[theorem]{Example}
\newtheorem{definition}[theorem]{Definition}
\newtheorem{remark}[theorem]{Remark}
\numberwithin{equation}{section}
\title{Homogeneous completely simple semigroups}
\author{Thomas Quinn-Gregson}
\email{thomas.quinn-gregson@tu-dresden.de}
\address{Institut f{\"u}r Algebra\\ TU Dresden \\ Dresden}
\date{\today}
\subjclass[2010]{Primary  20M10, Secondary 03C35  }
\keywords{}
\thanks{This research  has been funded by a Postdoctoral Fellowship from the Department of Mathematics of the University of York and by the European Research Council (Grant Agreement No. 681988, CSP-Infinity).}
\begin{document}

\begin{abstract}
A semigroup is \textit{completely simple} if it has no proper ideals and contains a primitive idempotent. We say that a completely simple semigroup $S$ is a \textit{homogeneous completely simple semigroup} if any isomorphism between finitely generated completely simple subsemigroups of $S$ extends to an automorphism of $S$. Motivated by the study of homogeneous completely regular semigroups, we obtain a complete classification of homogeneous completely simple semigroups, modulo the group case. As a consequence, all  finite regular homogeneous semigroups are described, thus extending the work of Cherlin on homogeneous finite groups. 
\end{abstract}

\maketitle

\section{Introduction}

 A countable first order structure $M$ is \textit{homogeneous} if any isomorphism between finitely generated  substructures extends to an automorphism of $M$. 
Interest in homogeneity stems from the strong connections between  homogeneity and model theoretic concepts including $\aleph_0$-categoricity and quantifier elimination \cite{Hodges97}.  In particular, a homogeneous structure which is \textit{uniformly locally finite} (ULF) and has finite signature is $\aleph_0$-categorical and has quantifier elimination (where a structure is ULF if there exists a function $f:\mathbb{N}\rightarrow \mathbb{N}$ such that, for any $n\in \mathbb{N}$, each $n$-generated substructure has at most $f(n)$ elements). 

Progress has been made in classifiying homogeneous groups and rings (see, for example, \cite{Cherlin93}, \cite{Saracino84}), and   has been completed for finite groups in \cite{Cherlin2000}, and solvable groups in \cite{Cherlin91}, up to the determination of the homogeneous nilpotent groups of class 2 and exponent 4. Homogeneous semilattices were determined in \cite{Truss99}, and this work was considerably extended to both homogeneous idempotent semigroups (bands)  and inverse semigroups  by the author in \cite{Quinnband} and \cite{Quinninv}, respectively. While a  classification of homogeneous bands was achieved, a number of open problems still exist for the inverse case.

A semigroup is \textit{completely regular} if every element is contained in a subgroup. Completely regular semigroups were first studied by Clifford  \cite{Cliffordcr}, although for an in depth study we refer the reader to Petrich and Reilly's monograph \cite{Petrich99}. Clifford called these semigroups `semigroups admitting relative inverses', since every element possesses a unique inverse in the maximal subgroup in which it lies. As a consequence, the class of completely regular semigroups forms a variety $\mathcal{CRS}$ of unary semigroups, that is, semigroup equipped with an additional, basic, unary operation (in this case the operation mapping an element to its relative inverse). We may thus define a completely regular semigroup to be \textit{homogeneous} if it is homogeneous as a unary semigroup. In this setting a `substructure' is a completely regular subsemigroup, and therefore this is a natural  choice of signature. Indeed, if we consider  homogeneity of a completely regular semigroup in the setting of semigroups, substructures need not be completely regular: this subtle variant is considered in the final section. Unless stated otherwise we will consider completely regular semigroups as unary semigroups. 

Bands (i.e. idempotent semigroups) form an important subvariety of $\mathcal{CRS}$, as do completely simple semigroups, which we now define. A semigroup without zero is called \textit{simple} if it has no proper ideals. A simple semigroup is \textit{completely simple} if it contains an idempotent which is minimal within the set of idempotents $E(S)$ of $S$ under the natural order. That is, if it contains an idempotent $e$ such that 
\[ (\forall f\in E(S)) \quad ef=fe=f \Rightarrow f=e. 
\] 

Clifford \cite{Cliffordcr} showed that every completely regular semigroup can be written as a semilattice $Y$ of completely simple semigroups $S_{\alpha}$ ($\alpha \in Y$). Further details of completely regular semigroups and this decomposition can be found in \cite{Petrich99}. A simple generalization of the band case in \cite{Quinnband} yields the following motivating result: 

\begin{theorem} Let $S=\bigcup_{\alpha\in Y} S_{\alpha}$ be a completely regular semigroup. If $S$ is homogeneous then each completely simple semigroup $S_{\alpha}$ is homogeneous, and  the $S_{\alpha}$'s are pairwise isomorphic.
\end{theorem}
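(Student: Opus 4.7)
The plan rests on the standard fact that the components $S_\alpha$ in the Clifford decomposition of a completely regular semigroup are precisely the $\mathcal{J}$-classes (equivalently the $\mathcal{D}$-classes) of $S$, together with the observation that every semigroup automorphism preserves Green's relations and therefore permutes the $S_\alpha$'s, inducing an automorphism of the semilattice $Y$. This single structural input is the only ingredient beyond the defining property of homogeneity.

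For the \emph{pairwise isomorphism} assertion, fix $\alpha,\beta\in Y$ and choose idempotents $e\in S_\alpha$ and $f\in S_\beta$. The unary subsemigroups generated by $e$ and $f$ are the one-element subsemigroups $\{e\}$ and $\{f\}$, and the map $e\mapsto f$ is trivially an isomorphism between them. By homogeneity of $S$ it extends to some $\phi\in\aut(S)$, and since $\phi$ permutes the $\mathcal{J}$-classes while sending $e\in S_\alpha$ to $f\in S_\beta$, its restriction to $S_\alpha$ is a bijection onto $S_\beta$. As the relative inverse inside each $S_\gamma$ coincides with that inherited from $S$, this restriction is automatically a unary-semigroup isomorphism, yielding $S_\alpha\cong S_\beta$.

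For \emph{homogeneity of each $S_\alpha$}, let $\iota:T\to T'$ be an isomorphism between finitely generated completely simple subsemigroups of $S_\alpha$. A brief verification (using that each $\mathcal{H}$-class of $T$ sits inside a group $\mathcal{H}$-class of $S$, forcing their identities to agree) shows that $T$ and $T'$ are closed under the ambient relative-inverse operation, so they are also finitely generated \emph{completely regular} subsemigroups of $S$ and $\iota$ is an isomorphism between them in this wider sense. Homogeneity of $S$ then supplies an extension $\hat\iota\in\aut(S)$, and because $\hat\iota(T)=T'\subseteq S_\alpha$ and $\hat\iota$ permutes the components, we must have $\hat\iota(S_\alpha)=S_\alpha$. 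The restriction $\hat\iota|_{S_\alpha}\in\aut(S_\alpha)$ then extends $\iota$, completing the proof. I anticipate no real obstacle here: the argument is the direct unary-semigroup analogue of the band-case result in \cite{Quinnband}, and the only point needing care is the coincidence between ``finitely generated completely simple subsemigroup of $S_\alpha$'' and ``finitely generated completely regular subsemigroup of $S$ contained in $S_\alpha$'', which the above $\mathcal{H}$-class remark supplies.
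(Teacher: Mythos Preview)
Your argument is correct and is precisely the kind of direct generalisation the paper has in mind: the paper does not actually prove this theorem but states it as ``a simple generalization of the band case in \cite{Quinnband}'', and your two-step argument (idempotents witness pairwise isomorphism; automorphisms of $S$ permute the $\mathcal{J}$-classes and hence restrict to each $S_\alpha$) is exactly that generalisation. The only remark is that your care over the coincidence of relative inverses is slightly more than is needed: since each $S_\alpha$ is a unary subsemigroup of $S$ (the Clifford decomposition respects the unary operation), any unary subsemigroup of $S_\alpha$ is automatically a unary subsemigroup of $S$, so the ``brief verification'' you flag is immediate.
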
 

 As a consequence, we first require a complete understanding of homogeneous completely simple semigroups before we begin to tackle the general case. This is our central aim: a complete classification of homogeneous completely simple semigroups.

We note that investigations into the model theoretic properties of completely simple semigroups were initiated by the author in \cite{Quinncat2}, for the case of $\aleph_0$-categorical completely (0-)simple semigroups. Further (and more complex) examples can be obtained from this paper by considering those homogeneous completely simple semigroup which are ULF. 

This paper proceeds as follows. In Section 2, the method of Fra\"iss\'e to determine homogeneity is transferred into the setting of completely simple semigroups. In Section 3, we review the work on the homogeneity of edge-coloured bipartite graphs by Jenkinson, Seidel and Truss. From the Rees' Theorem, every completely simple semigroup is isomorphic to a \textit{Rees matrix semigroup}, where the latter is determined by a group $G$ and a \textit{sandwich matrix} over $G$. From this, every completely simple semigroup is shown in Section 4 to induce an edge-coloured bipartite graph.  
We end Section 4 by deriving a number of consequences of the isomorphism theorem for Rees matrix semigroups, which are used throughout Section 5 to understand the role of the underlying group and sandwich matrix of a homogeneous Rees matrix semigroup.  In particular, we show that the homogeneity of a completely simple semigroup depends only on that of its maximal subgroups and its idempotent generated subsemigroup, and a complete classification is obtained. 
Finally, in Section 6 we consider the stronger notion of homogeneity of a completely simple semigroup  as a semigroup, which is motivated by the homogeneity of regular semigroups with either a finite number of idempotents, or an element of infinite order. Our hope is that the work we present here, together with the band and inverse semigroup cases, will lead to a better understanding of the homogeneity of completely regular semigroups. 

Henceforth, all structures considered will be of countable cardinality. The idempotents of a semigroup $S$ will be denoted by $E(S)$, and the identity of a group $G$ will be denoted by $\epsilon_G$, or simply $\epsilon$ if no confusion can occur. The identity automorphism of a structure $M$ is denoted by $\text{Id}_{M}$. Given a completely regular semigroup $S$ and $X\subseteq S$, we let $\langle X \rangle$ denote the completely regular semigroup generated by $X$. Notice if $S$ is a group, then $\langle X \rangle$ forms a subgroup of $S$.

\section{Basics of homogeneity} 

Our methods for proving homogeneity come in two forms: either we prove it directly from certain isomorphism theorems or we use the general method of Fra\"iss\'e. In this section we outline the latter method. Here we apply this only to completely simple semigroups (considered in the signature of unary semigroups), and for the general case we refer to \cite[Chapter 6]{Hodges97}. 

Let $\mathcal{K}$ be a class of finitely generated (f.g.) completely simple semigroups. Then we say:   
\begin{enumerate} [label=(\arabic*)] 
\item $\mathcal{K}$ is \textit{countable} if  it contains only countably many isomorphism types.
\item  $\mathcal{K} $ is \textit{closed under isomorphism} if whenever $A\in \mathcal{K}$ and $B\cong A$ then $B\in \mathcal{K}$.  
\item $\mathcal{K}$ has the \textit{hereditary property} (HP) if given $A\in \mathcal{K}$ and $B$ a f.g. completely simple subsemigroup of $A$ then $B\in \mathcal{K}$. 
\item  $\mathcal{K}$ has the \textit{joint embedding property} (JEP) if given $B_1,B_2\in \mathcal{K}$, then there exists $C\in \mathcal{K}$ and embeddings $f_i:B_i\rightarrow C$ ($i=1,2$).
\item  $\mathcal{K}$ has the \textit{amalgamation property}\footnote{This is also known as the \textit{weak amalgamation property}.}  (AP) if given $A, B_1, B_2\in \mathcal{K}$, where $A$ is non-empty, and embeddings $f_i:A\rightarrow B_i$ ($i=1,2$), then there exists $D\in \mathcal{K}$ and embeddings $g_i: B_i \rightarrow D$  such that 
\[     f_1 \circ g_1 = f_2 \circ g_2. 
\] 
The collection $A,B_1,B_2$ is known as an \textit{amalgam}, denoted by $[A;B_1,B_2].$\end{enumerate} 

The \textit{age} of a completely simple semigroup $S$ is the class of all f.g. completely simple semigroups which can be embedded in $S$. 

 Since the union of a chain of completely simple semigroups is itself completely simple, we may apply Fra\"iss\'e's Theorem \cite{Fraisse} to the case of completely simple semigroups as follows:  

\begin{theorem}[Fra\"iss\'e's Theorem for completely simple semigroups] Let  $\mathcal{K}$ be a non-empty countable class of f.g. completely simple semigroups which  is closed under isomorphism and satisfies HP, JEP and AP. Then there exists a unique, up to isomorphism, countable homogeneous completely simple semigroup $S$ such that $\mathcal{K}$ is the age of $S$. Conversely, the age of a countable homogeneous completely simple semigroup is closed under isomorphism, is countable and satisfies HP, JEP and AP. 
\end{theorem}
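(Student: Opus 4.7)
The plan is to reduce this statement to the general Fra\"iss\'e theorem for countable first-order structures. The key enabling fact, already noted in the paragraph preceding the theorem, is that the class of completely simple semigroups is closed under unions of chains in the signature of unary semigroups. This chain-closure ensures that direct limits along countable chains in $\mathcal{K}$ land back inside the ambient class of completely simple semigroups, which is precisely what is needed for the standard Fra\"iss\'e construction to go through.

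For the forward direction, given $\mathcal{K}$ satisfying the four hypotheses, I would build $S$ as the union of a countable chain $A_0 \hookrightarrow A_1 \hookrightarrow A_2 \hookrightarrow \cdots$ of members of $\mathcal{K}$. Countability of $\mathcal{K}$ lets one enumerate, on the one hand, representatives of the isomorphism types of $\mathcal{K}$, and on the other hand, all triples $(n, C, f)$ where $C$ is a f.g.\ completely simple subsemigroup of some $A_n$ and $f\colon C \to B$ is an embedding into some $B \in \mathcal{K}$. At each stage one uses JEP to jointly embed $A_n$ with the next isomorphism type, and AP to realise the next bookkeeping triple inside $A_{n+1}$. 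The union $S$ is completely simple by chain-closure; its age is $\mathcal{K}$ by HP together with the JEP steps; and the AP steps ensure, via a standard back-and-forth, that any isomorphism between f.g.\ completely simple subsemigroups of $S$ extends to an automorphism. Uniqueness of $S$ up to isomorphism then follows by a second back-and-forth between any two such limits.

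For the converse, suppose $S$ is countable homogeneous with age $\mathcal{K}$. Closure under isomorphism is immediate. Countability of $\mathcal{K}$ holds because $S$ has countably many finite tuples and hence only countably many f.g.\ completely simple subsemigroups up to isomorphism. HP is trivial. For JEP, given $B_1, B_2 \in \mathcal{K}$, fix embeddings $h_i\colon B_i \to S$ and take $C$ to be the f.g.\ completely simple subsemigroup of $S$ generated by $h_1(B_1) \cup h_2(B_2)$. For AP on an amalgam $[A;B_1,B_2]$ with $f_i\colon A \to B_i$, fix embeddings $h_i\colon B_i \to S$; the two composites $A \to B_i \to S$ have images $X_i$ that are f.g.\ completely simple subsemigroups of $S$, isomorphic via the canonical identification $X_1 \cong A \cong X_2$. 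Homogeneity extends this to an automorphism $\sigma$ of $S$ mapping $X_2$ onto $X_1$; adjusting $h_2$ by $\sigma$ makes the two composites $A \to S$ coincide, and $D$ is the f.g.\ completely simple subsemigroup of $S$ generated by $h_1(B_1)$ together with the adjusted image of $B_2$.

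The main obstacle is conceptual rather than technical: one must consistently work in the unary signature, so that \emph{subsemigroup generated by} always means generated as a unary semigroup (closed under relative inverses) and all embeddings respect the unary operation. Once this bookkeeping is in place, no step requires content beyond what Fra\"iss\'e's argument provides for first-order structures with the chain-closure property; the only ingredient peculiar to completely simple semigroups is the closure under unions of chains cited immediately before the theorem.
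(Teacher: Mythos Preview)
Your proposal is correct and matches the paper's approach exactly: the paper gives no proof of its own, stating only that ``since the union of a chain of completely simple semigroups is itself completely simple, we may apply Fra\"iss\'e's Theorem [Fra\"iss\'e] to the case of completely simple semigroups.'' Your sketch simply unpacks what that citation entails, with the same key ingredient (chain-closure in the unary-semigroup signature) doing all the work.
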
 

We call $S$ the \textit{Fra\"iss\'e limit} of $\mathcal{K}$.

We note that the age of any structure can be seen to be closed under isomorphism and have HP and JEP (Fra\"iss\'e also showed the converse to hold). Consequently, to show that a structure is homogeneous it suffices to show that its age is countable and has AP.

\begin{example} Given a pair of index sets $I$ and $\Lambda$, we may form a band $B=I\times \Lambda$ with multiplication $(i,\lambda)(j,\mu)=(i,\mu)$. Then $B$ is a  \textit{rectangular band}, and is thus completely simple. The class of all finite rectangular bands forms a Fra\"iss\'e class, with Fra\"iss\'e limit the rectangular band $\mathbb{N} \times \mathbb{N}$ \cite{Quinnband}. 
\end{example} 

\section{homogeneous edge-coloured bipartite graphs} 

A major aim of this paper is to link the homogeneity of completely simple semigroups with previously studied homogeneous structures, chiefly groups and edge-coloured bipartite graphs. In this section we recap the work of Jenkinson, Seidel and Truss \cite{Jenkinson} on the homogeneity of edge-coloured bipartite graphs. Note that they considered only the case where the colouring set was finite, but it is necessary to give the background details in a more general setting. 

A \textit{bipartite graph} is a (simple) graph whose vertices can be split into two disjoint non-empty sets $L$ and $R$ such that every edge connects a vertex in $L$ to a vertex in $R$.
 The sets $L$ and $R$ are called the \textit{left set} and the \textit{right set}, respectively. We consider bipartite graphs in the signature $(E,L,R)$, where $E$ corresponds to the edge relation, and $L$ and $R$ are unary relations corresponding to the left and right sets, respectively. 
 
%  Formally, a bipartite graph is a triple $\Gamma=\langle L,R, E \rangle$ such that $L$ and $R$ are non-empty trivially intersecting sets and 
%\[ E\subseteq \{ \{x,y\}  :x\in L, \, y\in R \}.
%\] 
% We call $L\cup R$ the set of \textit{vertices} of $\Gamma$ and $E$ the set of \textit{edges}. An isomorphism between a pair of bipartite graphs $\Gamma=\langle L,R,E \rangle$ and $\Gamma'=\langle L',R',E' \rangle$ is a bijection $\psi: L\cup R \rightarrow L'\cup R'$ such that $L\psi=L'$,  $R\psi=R'$, and  $\{l,r\}\in E$ if and only if $\{l\psi,r\psi\}\in E'$.
 
A bipartite graph is called \textit{complete} if all vertices from $L$ and $R$ are joined by an edge. If each vertex  is incident to exactly one edge, then it is called a \textit{perfect matching}. The \textit{complement} of a bipartite graph $\Gamma$ is the bipartite graph with the same vertex set as $\Gamma$ but having precisely those edges which are not edges in $\Gamma$.  We call $\Gamma$ \textit{generic} if $|L|=\aleph_0=|R|$, and  for any pair of finite disjoint subsets $U$ and $V$ of $L$ (of $R$) there exists $x\in R$ ($x\in L$) joined to all elements of $U$ and to no elements of $V$.

We may colour the edges of a complete bipartite graph by colours from a non-empty set $C$, and we   call such a graph \textit{$C$-edge-coloured}, and the original case \textit{monochromatic}. 
Notice that a bipartite graph can be considered as a 2-edge-coloured bipartite graph, where the two colours correspond to ‘joined’ and ‘not joined’. Formally, we construct a colouring function $F$ from $L\times R$ to $C$, and a $C$-edge-coloured graph $\Gamma$ is considered in the signature $(E,L,R,E_1,\dots,E_{|C|})$, where $E_i$ is the binary relation corresponding to edges which are coloured by a fixed colour.

Our  choice of signature gives rise to a natural definition of isomorphism between edge-coloured bipartite graphs. Let $\Gamma=L\cup R$ and $\Gamma'=L'\cup R'$ be a pair of $C$-edge-coloured bipartite graphs. A bijection $\psi$ from $\Gamma$ to $\Gamma'$ is an \textit{isomorphism} if it preserves left and right sets (and thus edges), and preserves colours:  
\[ L\psi = L', \quad R\psi= R', \quad (x,y)F = (x\psi, y\psi)F. 
\] 
An (induced) sub-$C$-edge-coloured graph $A$ of $\Gamma$ is a subgraph $L'\cup R'$ of $\Gamma$ with each edge $(x,y)$ in $A$ coloured as in $\Gamma$. That is, the colouring function $F':L'\times R'$ to $C$ of $A$ is simply the restriction of the colouring function of $\Gamma$ to $A$. 

Given a colour set $C$, we say that the $C$-edge-coloured bipartite graph $\Gamma$ is \textit{$C$-generic} if $|L|=\aleph_0=|R|$ and for any map $\alpha$ from a finite subset of $L$ (of $R$) into $C$, there exists $x\in R$ ($x\in L$) such that for all $y\in$ dom $\alpha$, $(y,x)F=y\alpha$. It follows that there exist \textit{infinitely} many $x\in R$ with this property, and such elements are often referred to as \textit{witnesses}.

\begin{theorem}\label{thm:edge class} \cite{Jenkinson} If $\Gamma$ is a countable homogeneous $C$-edge-coloured bipartite graph where $1 \leq |C|
< \aleph_0$,  then one of  the following holds:
\begin{enumerate}
\item $|C|=1$ and all edges have the same colour,
\item  $|C| =2$ and the edges of one colour are a perfect matching, and those of the other colour are its
complement,
\item $|C | \geq  2$ and $\Gamma$ is $C$-generic.
\end{enumerate}
\end{theorem}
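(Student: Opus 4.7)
My plan is to exploit homogeneity to make the local structure at each vertex uniform, and then split on a dichotomy based on whether each colour appears infinitely often at every vertex.

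First I would make routine reductions. The case $|C|=1$ is immediate (all edges share the unique colour), so assume $|C|\ge2$; after discarding any colour not labelling any edge, assume every $c\in C$ is realised. Homogeneity applied to singleton substructures shows that, for each colour $c\in C$, the number of $c$-neighbours $d_L(c)$ of a vertex $v\in L$ (resp.\ $d_R(c)$ of $v\in R$) depends only on $c$; in particular $|L|$ and $|R|$ are each countably infinite.

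\emph{Case (A): every $d_L(c)$ and $d_R(c)$ equals $\aleph_0$.} I would show $\Gamma$ is $C$-generic. Given a finite $U=\{u_1,\dots,u_n\}\subset L$ and $\alpha:U\to C$, start with any $x_0\in R$ and successively choose distinct vertices $u_i^0\in L$ with $(u_i^0,x_0)F=u_i\alpha$ (possible since $d_R(u_i\alpha)=\aleph_0$). The set $\{u_1^0,\dots,u_n^0\}$ carries no internal edges (it lies in $L$), so the bijection $u_i^0\mapsto u_i$ is a partial isomorphism of $\Gamma$; by homogeneity it extends to an automorphism $\phi$, and $\phi(x_0)\in R$ then witnesses $\alpha$. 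The symmetric argument on finite subsets of $R$ completes the verification of $C$-genericity, yielding case~(3).

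\emph{Case (B): some $d_L(c^*)$ or $d_R(c^*)$ is finite.} WLOG $d_L(c^*)=k$ with $1\le k<\aleph_0$. For $|C|=2$, writing $C=\{c^*,c'\}$, the subgraph of $c^*$-coloured edges is homogeneous as a bipartite graph in the signature $(E,L,R)$, because the complementary colour $c'$ fills all remaining edges so any partial isomorphism of the subgraph is automatically one of $\Gamma$. By the classification of countable homogeneous bipartite graphs, the only possibility compatible with $k<\aleph_0$ is $k=1$ with the subgraph a perfect matching, giving case~(2). For $|C|\ge3$ I would derive a contradiction by analysing substructures on two disjoint $c^*$-matched pairs $(v_1,r_1),(v_2,r_2)$: homogeneity of the 4-vertex substructure forces the unordered cross-colour pair $\{F(v_1,r_2),F(v_2,r_1)\}$ to be a global invariant, and varying the pairs shows $C\setminus\{c^*\}$ contains at most two colours; a further refinement using triples of matched pairs pins this down to at most one colour, contradicting $|C|\ge3$.

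The main technical obstacle is the rigidity analysis in Case~(B) when $k\ge2$ and $|C|\ge3$: the interaction between two $c^*$-blocks then takes place on a whole $k\times k$ grid of cross edges, and extracting the required colour-exhaustion contradiction demands careful bookkeeping on how the automorphism group of $\Gamma$ acts on such blocks. Additional care is also needed for the asymmetric situation $d_L(c^*)<\aleph_0=d_R(c^*)$, in which the one-sidedness of the finiteness must be harnessed.
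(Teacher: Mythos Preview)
The paper does not prove this theorem; it is quoted verbatim from Jenkinson, Seidel and Truss (note the citation \cite{Jenkinson} attached to the statement), so there is no in-paper argument to compare your proposal against.

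On the merits of your sketch: Case~(A) is the standard argument and is fine. Elsewhere there are real gaps. First, the assertion that $|L|$ and $|R|$ are necessarily infinite does not follow from the colour-degree regularity you derive; the perfect-matching configuration of case~(2) with $|L|=|R|$ finite already shows this, so the finite case has to be treated rather than excluded.

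More seriously, your Case~(B) is, as you yourself flag, incomplete at precisely the point where the work lies. The reduction to the classification of homogeneous (monochrome) bipartite graphs is valid only when $|C|=2$, because only then does a partial isomorphism of the $c^*$-subgraph automatically respect the remaining colour. For $|C|\ge 3$ that reduction fails, so you have no argument forcing $k=1$; your ``two matched pairs'' contradiction tacitly assumes $k=1$, and the $k\ge 2$ case together with the asymmetric situation $d_L(c^*)<\aleph_0=d_R(c^*)$ are left as open obstacles. Since these are exactly the substantive cases, what you have is an outline of the easy parts rather than a proof. If you want to complete this route you will need a direct combinatorial argument (not a reduction) showing that a finite colour-degree forces $|C|=2$ and $k=1$; this is essentially what the Jenkinson--Seidel--Truss paper does.
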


We note that the homogeneity of infinitely edge-coloured bipartite graphs was not considered in \cite{Jenkinson}. Fortunately, the only example of such a bipartite graph arising in this paper will be the $\omega$-generic bipartite graph $\Gamma$. The homogeneity of $\Gamma$ can be proved using Fra\"iss\'e's method, using an argument identical  to that used for the finite colouring case in  Lemma 2.1 of  \cite{Jenkinson}.  

\begin{lemma} (cf. \cite{Jenkinson}) Let $C$ be a (possibly infinite) colouring set $C$. Then the $C$-generic graph is the Fra\"iss\'e limit of the class of all finite bipartite $C$-edge-coloured graphs.
\end{lemma}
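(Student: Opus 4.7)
The plan is to apply Fra\"iss\'e's Theorem in the category of $C$-edge-coloured bipartite graphs, which is proved in essentially the same way as Theorem 2.2 for completely simple semigroups (the union of a chain of $C$-edge-coloured bipartite graphs is again one). Let $\mathcal{K}$ denote the class of all finite $C$-edge-coloured bipartite graphs. First I would verify that $\mathcal{K}$ is a Fra\"iss\'e class. Closure under isomorphism and the hereditary property are immediate. Since all our structures are countable we may take $|C|\leq\aleph_0$, so the number of isomorphism types of $C$-edge-coloured bipartite graphs with $n+m$ vertices is at most $|C|^{nm}\leq\aleph_0$, and $\mathcal{K}$ is countable.

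For JEP, given $B_1=L_1\cup R_1$ and $B_2=L_2\cup R_2$ in $\mathcal{K}$, take the disjoint union with left set $L_1\sqcup L_2$ and right set $R_1\sqcup R_2$, retaining the colourings on the $B_i$ and colouring the new cross edges by any fixed $c\in C$. For AP, given an amalgam $[A;B_1,B_2]$ with embeddings $f_i:A\rightarrow B_i$, glue $B_1$ and $B_2$ along their copies of $A$: the left set of $D$ is $(L_{B_1}\setminus f_1(L_A))\sqcup f_1(L_A)\sqcup(L_{B_2}\setminus f_2(L_A))$, with $f_1(L_A)$ identified with $f_2(L_A)$ via the amalgam, and analogously for the right set. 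Edges inside $B_1$ or $B_2$ retain their colours, and any new edges (between vertices of $B_1$ not in $f_1(A)$ and vertices of $B_2$ not in $f_2(A)$) are coloured by any fixed $c\in C$.

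Next I would show that the Fra\"iss\'e limit coincides with the $C$-generic graph $\Gamma$. The age of $\Gamma$ equals $\mathcal{K}$: any finite $A\in\mathcal{K}$ can be embedded into $\Gamma$ inductively, adding one vertex at a time using the $C$-generic extension property with the prescribed colour pattern (infinitely many witnesses ensure the new vertex can be chosen outside the already-embedded image). For homogeneity I would use a standard back-and-forth: given an isomorphism between finite sub-$C$-edge-coloured graphs $A,A'\subseteq\Gamma$, to extend it to include a further vertex $v\in\Gamma\setminus A$, apply the $C$-generic property in $\Gamma$ with the colouring map determined by the existing images and the edges of $v$ in $A$; interchange the roles of domain and range at odd steps. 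By uniqueness in Fra\"iss\'e's Theorem, $\Gamma$ is the Fra\"iss\'e limit of $\mathcal{K}$.

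The argument is a direct transcription of Lemma 2.1 of \cite{Jenkinson}, and the only point deserving any care is the countability of $\mathcal{K}$, which would fail if $|C|$ were uncountable; since our standing assumption makes $C$ countable, this causes no obstruction. Everything else goes through verbatim, as the JEP and AP constructions above do not depend on $C$ being finite.
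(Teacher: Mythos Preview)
Your proposal is correct and follows essentially the same approach as the paper: the paper does not give a proof at all, merely remarking that the argument is ``identical to that used for the finite colouring case in Lemma 2.1 of \cite{Jenkinson}'', and you have written out precisely that argument, together with the one additional observation needed in the infinite case (countability of $\mathcal{K}$ relies on $|C|\le\aleph_0$, which follows from the standing countability assumption).
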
 

\section{Morphisms between completely simple semigroups} 

Our hope of achieving a classification of homogeneous completely simple semigroups is aided by the well known structure theorem of Rees given below, as well as a relatively simple isomorphism theorem (Theorem \ref{iso thm}). We refer to \cite[Chapter 3]{Howie94} for a comprehensive study of completely simple semigroups and, in particular, a proof of Rees' Theorem. 

\begin{theorem}[Rees' Theorem] 
  Let $G$ be a group,  $I$ and $\Lambda$ be non-empty index sets and let $P=(p_{\lambda,i})$ be a $\Lambda \times I$ matrix with entries in $G$. Let $S=I\times G \times \Lambda$, and define multiplication on $S$ by 
\[  (i,g,\lambda)(j,h,\mu) = (i,g p_{\lambda, j} h,\mu) 
\]
Then $S$ is a completely simple semigroup, denoted by $\mathcal{M}[G;I,\Lambda;P]$. Conversely, every completely simple semigroup is isomorphic to a semigroup constructed in this way. 
\end{theorem}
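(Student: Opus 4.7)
The plan is to verify the two directions separately: the forward direction is a routine calculation in $G$, while the converse relies on standard Green's-relations machinery for completely simple semigroups.

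\medskip

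\noindent\textbf{Forward direction.} First I would check associativity by expanding both bracketings of a triple product; each yields $(i,\, g\, p_{\lambda,j}\, h\, p_{\mu,k}\, k',\, \nu)$, appealing only to associativity in $G$. The idempotents are found by squaring: $(i,g,\lambda)^{2} = (i,\, g\, p_{\lambda,i}\, g,\, \lambda)$, so
\[
E(S) = \{(i,\, p_{\lambda,i}^{-1},\, \lambda) : i \in I,\, \lambda \in \Lambda\}.
\]
Writing $e = (i, p_{\lambda,i}^{-1}, \lambda)$ and $f = (j, p_{\mu,j}^{-1}, \mu)$, the requirement $ef = f$ forces $i = j$ while $fe = f$ forces $\mu = \lambda$, whence $e = f$; thus every idempotent is primitive. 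For simplicity, given $(i,g,\lambda), (j,h,\mu) \in S$ I would solve for $g_1, g_2 \in G$ (using invertibility of the sandwich entries) so that $(j, g_1, \lambda)(i,g,\lambda)(i, g_2, \mu) = (j,h,\mu)$, showing $S\, a\, S = S$ for every $a \in S$.

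\medskip

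\noindent\textbf{Converse direction.} Let $S$ be completely simple with primitive idempotent $e$. I would invoke the standard facts that $S$ forms a single $\mathcal{D}$-class, every $\mathcal{H}$-class is a group, and $ab \in R_a \cap L_b$ for all $a, b \in S$. Set $G := H_e$, index the $\mathcal{R}$-classes by $I$ and the $\mathcal{L}$-classes by $\Lambda$ with $e \in R_{i_0} \cap L_{\lambda_0}$, and choose representatives $r_i \in R_i \cap L_{\lambda_0}$ and $q_\lambda \in R_{i_0} \cap L_\lambda$ with $r_{i_0} = q_{\lambda_0} = e$. Each element of $S$ then admits a unique factorization $r_i\, g\, q_\lambda$ with $g \in G$. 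Defining the sandwich matrix by $p_{\lambda,i} := q_\lambda r_i \in G$, the computation
\[
(r_i\, g\, q_\lambda)(r_j\, h\, q_\mu) \;=\; r_i\, g\, (q_\lambda r_j)\, h\, q_\mu \;=\; r_i\, (g\, p_{\lambda,j}\, h)\, q_\mu
\]
shows that $(i,g,\lambda) \mapsto r_i\, g\, q_\lambda$ is a bijective homomorphism from $\mathcal{M}[G; I, \Lambda; P]$ onto $S$.

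\medskip

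\noindent\textbf{Main obstacle.} The forward direction is essentially mechanical. The real content lies in the Green's-relation lemmas supporting the converse: specifically, that every $\mathcal{H}$-class of a completely simple semigroup is a group, that $q_\lambda r_i \in H_e$ so that $p_{\lambda,i}$ indeed lives in $G$, and that the factorization $s = r_i\, g\, q_\lambda$ is unique. These are precisely the ``egg-box'' structural facts for a single completely simple $\mathcal{D}$-class, which I would import from Howie's monograph as indicated by the author.
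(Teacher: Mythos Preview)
The paper does not prove Rees' Theorem; it is stated as a classical result and the reader is referred to \cite[Chapter 3]{Howie94} for a proof. Your sketch is the standard textbook argument (and indeed essentially the one in Howie), so there is nothing to compare against in the paper itself---your proposal is correct and matches the intended external reference.
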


We call $S=\mathcal{M}[G;I,\Lambda;P]$ a \textit{Rees matrix semigroup}.

\begin{remark} The triple $(G,I,\Lambda)$ arises from the \textit{Green's} relations  $\mathcal{L},\mathcal{R}$ and $\mathcal{H}=\mathcal{L}\cap \mathcal{R}$ of a completely simple semigroup $S$. The Green's relations form equivalence relations, and the $\mathcal{H}$-classes of $S$ are groups, which are pairwise isomorphic. The proof of Rees' Theorem  that $S\cong \mathcal{M}[G;S/\mathcal{R},S/\mathcal{L};P]$, where $G$ is isomorphic to the $\mathcal{H}$-classes of $S$ and $P$ is some matrix over $G$. 
%Define $x \, \mathcal{L} \, y$ if and only if $Sx=Sy$, $x \, \mathcal{R} \, y$ if and only if $xS=yS$, and $\mathcal{H}=\mathcal{L} \cap \mathcal{R}$.  
\end{remark} 

The following result follows immediately from \cite[Theroem 3.4.2]{Howie94} and Rees' Theorem: 

\begin{theorem} \label{thm: normal} Given a Rees matrix semigroup $S=\mathcal{M}[G;I,\Lambda;P]$ and any fixed elements $i\in I$, $\lambda\in \Lambda$,  there exists a $\Lambda\times I$ matrix $Q$ over $G$ with $q_{\lambda,j}=\epsilon=q_{i,\mu}$ for each $j\in I, \mu\in \Lambda$ and such that $S$ is isomorphic to $T=\mathcal{M}[G;I,\Lambda;Q]$.  
\end{theorem}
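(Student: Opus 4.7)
The plan is to exhibit an explicit isomorphism $\varphi: T \to S$ obtained by rescaling the group coordinate on each $\mathcal{H}$-class. Fixing the distinguished $i\in I$ and $\lambda\in\Lambda$, I try a map of the form
\[
(j,g,\mu)\varphi \;=\; (j,\alpha_j\, g\, \beta_\mu,\mu),
\]
for elements $\alpha_j,\beta_\mu\in G$ to be determined. A short calculation comparing $((j,g,\mu)(k,h,\nu))\varphi$ with $(j,g,\mu)\varphi \cdot (k,h,\nu)\varphi$ reduces the homomorphism condition to the single identity
\[
q_{\mu,j} \;=\; \beta_\mu\, p_{\mu,j}\, \alpha_j \qquad (\mu\in\Lambda,\ j\in I).
\]

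To enforce $q_{\lambda,j}=\epsilon$ for all $j$ and $q_{\mu,i}=\epsilon$ for all $\mu$ (I read the ``$q_{i,\mu}$'' in the statement as a typo for $q_{\mu,i}$, since $Q$ is $\Lambda\times I$), I set $\alpha_i=\epsilon$; the column-$i$ condition then forces $\beta_\mu = p_{\mu,i}^{-1}$, and the row-$\lambda$ condition forces $\alpha_j = p_{\lambda,j}^{-1}p_{\lambda,i}$. These two prescriptions are compatible at the entry $(\lambda,i)$, so I may define
\[
q_{\mu,j} \;:=\; p_{\mu,i}^{-1}\, p_{\mu,j}\, p_{\lambda,j}^{-1}\, p_{\lambda,i},
\]
and a direct substitution confirms the two normalization conditions.

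With these choices, $\varphi$ is a bijection (its inverse uses $\alpha_j^{-1}$ and $\beta_\mu^{-1}$), and the homomorphism property holds by construction of $Q$. Since the relative inverse in a completely simple semigroup is definable from the multiplication (as the unique inverse within the element's $\mathcal{H}$-class), any semigroup isomorphism between $T$ and $S$ is automatically an isomorphism of unary semigroups, so no further verification is required. The only real hurdle is careful bookkeeping of subscripts, which is why the theorem can reasonably be stated as ``following immediately'' from the Howie--Rees results invoked in the text.
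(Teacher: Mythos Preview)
Your argument is correct and is precisely the standard normalisation construction: the paper does not supply its own proof but simply cites \cite[Theorem~3.4.2]{Howie94}, and the explicit rescaling $(j,g,\mu)\mapsto(j,\alpha_j g\beta_\mu,\mu)$ with $\alpha_j=p_{\lambda,j}^{-1}p_{\lambda,i}$, $\beta_\mu=p_{\mu,i}^{-1}$ that you give is exactly what underlies that reference. Your reading of $q_{i,\mu}$ as a slip for $q_{\mu,i}$ is also right.
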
 

We call $T$ the  \textit{normalisation} of $S$ along row $\lambda$ and column $i$. If $S$ is normalised, then we let $1_{\Lambda}\in \Lambda$  and $1_{I}\in I$ denote the row and column in which the normalisation has occurred.   The benefits of using the normalised form is highlighted in the following result. 

\begin{proposition} \cite{Howie76} Let $S=\mathcal{M}[G;I,\Lambda;P]$ be a normalised Rees matrix semigroup. Then 
\[ \langle E(S) \rangle = \mathcal{M}[\langle G^P \rangle;I,\Lambda;P]
\] 
 where $G^P=\{ p_{\lambda,i}:\lambda\in \Lambda,i\in I\}$. 
\end{proposition}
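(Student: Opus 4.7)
The plan is to prove the two inclusions separately. First, a direct check of the multiplication rule shows
\[ E(S) = \{(i, p_{\lambda,i}^{-1}, \lambda) : i \in I, \lambda \in \Lambda\}, \]
so every idempotent's middle coordinate already lies in $\langle G^P\rangle$. Writing $T := \mathcal{M}[\langle G^P\rangle; I, \Lambda; P]$, the matrix $P$ has all entries in $\langle G^P\rangle$, so $T$ is a well-defined completely simple (hence completely regular, in the unary signature) subsemigroup of $S$ containing $E(S)$. This yields the easy inclusion $\langle E(S)\rangle \subseteq T$.

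For the reverse inclusion I would focus on the $\mathcal{H}$-class $H_{1_I,1_\Lambda} = \{1_I\} \times G \times \{1_\Lambda\}$. Under normalisation this $\mathcal{H}$-class is a group with identity $(1_I,\epsilon,1_\Lambda)$ and product $(1_I,g,1_\Lambda)(1_I,h,1_\Lambda) = (1_I,gh,1_\Lambda)$, hence canonically isomorphic to $G$. The key computation is
\[ (1_I,\epsilon,\lambda)(j,\epsilon,1_\Lambda) = (1_I, p_{\lambda,j}, 1_\Lambda), \]
exhibiting each generator $p_{\lambda,j}$ of $\langle G^P\rangle$ as the middle coordinate of a product of two idempotents of $S$ lying inside $H_{1_I,1_\Lambda}$.

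Since $\langle E(S)\rangle$ is closed under the unary inverse operation, and so closed under taking inverses in each $\mathcal{H}$-class, the intersection $\langle E(S)\rangle \cap H_{1_I,1_\Lambda}$ is a subgroup of $H_{1_I,1_\Lambda}$, which by the previous step contains every $(1_I, p_{\lambda,j}, 1_\Lambda)$. Under the identification $H_{1_I,1_\Lambda} \cong G$ this subgroup therefore contains $\{1_I\} \times \langle G^P\rangle \times \{1_\Lambda\}$. Finally, any $(i, g, \lambda) \in T$ is recovered as
\[ (i, g, \lambda) = (i,\epsilon,1_\Lambda)(1_I, g, 1_\Lambda)(1_I,\epsilon,\lambda), \]
a product of elements already shown to lie in $\langle E(S)\rangle$, which completes $T \subseteq \langle E(S)\rangle$.

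The only real obstacle is the bookkeeping needed to see that the subgroup of $H_{1_I,1_\Lambda}$ generated by the idempotent products is exactly $\langle G^P\rangle$ rather than some proper subgroup. This is precisely where the normalising identities $p_{1_\Lambda, i} = \epsilon = p_{\lambda, 1_I}$ do the work: they strip out the otherwise contaminating sandwich entries, so that each $p_{\lambda,j}$ appears cleanly as the middle coordinate of a single two-idempotent product.
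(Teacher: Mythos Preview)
Your proof is correct. The paper does not prove this proposition at all; it simply cites it from Howie's 1978 paper \cite{Howie76} as a known result, so there is no in-paper argument to compare against. Your two-inclusion argument is the standard route: the verification that $E(S)\subseteq T$ gives one direction, and the key identity $(1_I,\epsilon,\lambda)(j,\epsilon,1_\Lambda)=(1_I,p_{\lambda,j},1_\Lambda)$---which works precisely because normalisation makes both factors idempotent---together with the sandwich decomposition $(i,g,\lambda)=(i,\epsilon,1_\Lambda)(1_I,g,1_\Lambda)(1_I,\epsilon,\lambda)$ gives the other. The closing remark about normalisation ``stripping out contaminating sandwich entries'' is exactly the point.
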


It is worth extending the notation of the proposition above. Given a $\Lambda\times I$ matrix $Q=(q_{\lambda,i})$ with entries from some set $X$, we denote by $X^Q$  the subset of $X$ given by 
\[ X^Q=\{q_{\lambda,i}: \lambda\in \Lambda,i\in I\}.
\] 
 The matrix $Q$ induces an $X^Q$-edge coloured graph, denoted by $\Gamma(Q)$, with left set  $\Lambda$, right set $I$, and colouring function $F: \Lambda \times I \rightarrow X^Q$ defined by 
 \[ (\lambda,i)F= q_{\lambda,i} \quad ( (\lambda,i)\in   \Lambda \times I).
\]

\begin{definition}\label{def: gamma} A normalised Rees matrix semigroup $S=\mathcal{M}[G;I,\Lambda;P]$ gives rise to two key bipartite graphs, one monochromatic, and one coloured:  

(1) We let $\Gamma_{P}$ denote the complete bipartite graph with left set  $\Lambda$ and right set $I$. 

(2) We denote by $\Gamma(S)$   the $G^{P'}$-edge-coloured graph $\Gamma(P')$, where $P'$ is the $\Lambda\setminus 1_{\Lambda}$ by  $I\setminus \{1_I\}$ submatrix of $P$. We call $\Gamma(S)$ the \textit{induced edge-coloured bipartite graph of $S$}. 
\end{definition} 

Notice that $\Gamma_{P}$ is always a homogeneous bipartite graph since it is complete. Our perhaps obscure choice of $\Gamma(S)$ will be justified in the next section, where we will show that the homogeneity of $S$ passes to $\Gamma(S)$ in the finite coloured case (that is, the case where $P$ has only finitely many distinct entries).

%\begin{lemma}\label{sub rees} Let $S=\mathcal{M}[G;I;\Lambda;P]$ be a completely simple semigroup, and let  $(i_1,g_1,\lambda_1),\dots,(i_n,g_n,\lambda_n)$ be a finite list of elements of $S$ such that $p_{\lambda_s,i_t}=e$ for each $1\leq s,t \leq n$. Then 
%\[ \langle (i_1,g_1,\lambda_1),\dots,(i_n,g_n,\lambda_n)\rangle = \mathcal{M}[\langle g_1,\dots,g_n\rangle;I';\Lambda';Q],
%\] 
%where $I'=\{i_1,\dots i_n\}$, $\Lambda'=\{\lambda_1,\dots,\lambda_n\}$, and  $Q$ is the $\Lambda ' \times I'$ submatrix of $P$ with entries $e$. 
%\end{lemma} 

As in \cite{Araujo10}, we adapt the isomorphism theorem for Rees matrix semigroups to explicitly highlight the role of the underlying bipartite graph: 

\begin{theorem}\label{iso thm} Let $S=\mathcal{M}[G;I,\Lambda;P]$ and $T=\mathcal{M}[H;J,M;Q]$ be a pair of normalised Rees matrix semigroups. Let $\theta:G\rightarrow H$ be a group morphism,   $\psi:\Gamma_{P} \rightarrow \Gamma_{Q}$ a bipartite graph morphism, and let $u_i,v_{\lambda}\in H$ ($i\in I, \lambda\in \Lambda$) be such that
\begin{equation*} \label{p eq} p_{\lambda,i}\theta=v_{\lambda} q_{\lambda\psi, i\psi} u_i
\end{equation*}
for all $i\in I$ and $\lambda\in \Lambda$. Define a map $\phi:S\rightarrow T$ given by 
\[ (i,g,\lambda)\phi= (i\psi, u_i(g\theta)v_{\lambda},\lambda\psi).
\]  
Then  $\phi$ is a morphism, denoted  by $[\theta,\psi,u_i,v_{\lambda}]$, and moreover every morphism from $S$ to $T$ can be constructed in this way. The morphism $\phi$ is injective/surjective if and only if both $\theta$ and $\psi$ are injective/surjective. 
\end{theorem}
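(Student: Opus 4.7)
The plan is to split into two parts: verify that any map of the prescribed form $[\theta,\psi,u_i,v_\lambda]$ is a morphism, and then show every morphism arises this way.

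The forward direction is a routine calculation: substituting the definition of $\phi$ into $\phi\bigl((i,g,\lambda)(j,h,\mu)\bigr)$ and into $\phi\bigl((i,g,\lambda)\bigr)\phi\bigl((j,h,\mu)\bigr)$, the two sides agree precisely at the place where the sandwich matrix appears, by the hypothesis $p_{\lambda,j}\theta = v_\lambda\, q_{\lambda\psi,j\psi}\, u_j$. Preservation of the unary operation is automatic since the inverse in each $\mathcal{H}$-class of a completely simple semigroup is determined by the semigroup operation alone.

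For the converse, let $\phi: S\to T$ be any morphism and write $\phi((i,g,\lambda)) = (j(i,g,\lambda),\, f(i,g,\lambda),\, \mu(i,g,\lambda))$. Comparing first and third coordinates in $\phi\bigl((i,g,\lambda)(i',g',\lambda')\bigr) = \phi((i,g,\lambda))\phi((i',g',\lambda'))$, and using that in a Rees matrix semigroup the first coordinate of a product equals that of its left factor and the third equals that of its right factor, we deduce that $j(i,g,\lambda)$ depends only on $i$ and $\mu(i,g,\lambda)$ only on $\lambda$. This defines $\psi:I\cup\Lambda \to J\cup M$ preserving left/right sets; since $\Gamma_P,\Gamma_Q$ are complete bipartite, $\psi$ is automatically a bipartite graph morphism. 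Writing $\phi((i,g,\lambda)) = (i\psi, f(i,g,\lambda), \lambda\psi)$, the morphism condition becomes the functional equation
\[
f(i,\, g\, p_{\lambda,i'}\, g',\, \lambda') \;=\; f(i,g,\lambda)\, q_{\lambda\psi,\, i'\psi}\, f(i',g',\lambda').
\]
Now set $e := q_{1_\Lambda\psi,\,1_I\psi}$ and define $g\theta := e\cdot f(1_I,g,1_\Lambda)$, $u_i := f(i,\epsilon,1_\Lambda)$, $v_\lambda := e\cdot f(1_I,\epsilon,\lambda)$. Specialising the functional equation with $i=i'=1_I$, $\lambda=1_\Lambda$ and using $p_{1_\Lambda,1_I}=\epsilon$ shows $(gg')\theta = (g\theta)(g'\theta)$, so $\theta$ is a group morphism with $\epsilon\theta=\epsilon$. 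Two more specialisations (taking $\lambda = 1_\Lambda, i'=1_I, g=\epsilon$ to express $f(1_I,g',\lambda')$ via $\theta$ and $v$, then $i'=1_I, \lambda=1_\Lambda, g=\epsilon$ to incorporate $u_i$) give $f(i,g,\lambda) = u_i(g\theta)v_\lambda$. Substituting back into the functional equation and cancelling now yields the required relation $p_{\lambda,i}\theta = v_\lambda\, q_{\lambda\psi,i\psi}\, u_i$.

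For the injectivity/surjectivity claim, evaluating the relation at $\lambda=1_\Lambda$ gives $u_i = q_{1_\Lambda\psi,\,i\psi}^{-1}\, v_{1_\Lambda}^{-1}$, so $u_i$ depends on $i$ only through $i\psi$; hence if $\psi|_I$ is not injective we get $\phi((i,\epsilon,1_\Lambda)) = \phi((i',\epsilon,1_\Lambda))$ for some $i\neq i'$, contradicting injectivity of $\phi$. The argument for $\psi|_\Lambda$ is symmetric, while injectivity of $\theta$ follows by restricting $\phi$ to the group $\mathcal{H}$-class $\{1_I\}\times G\times \{1_\Lambda\}$. The converse implications, and the analogous surjectivity statements, are immediate from the explicit form of $\phi$. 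The main obstacle throughout is the twist by $e$ in the definition of $\theta$: without it, $f(1_I,\cdot,1_\Lambda)$ is only a \emph{twisted} homomorphism, and the product decomposition $f(i,g,\lambda)=u_i(g\theta)v_\lambda$ fails to separate cleanly.
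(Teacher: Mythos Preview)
The paper does not supply its own proof of this theorem: it is stated as an adaptation of the classical isomorphism theorem for Rees matrix semigroups, with a reference to \cite{Araujo10}. Your argument is the standard one and is correct; the twist by $e=q_{1_\Lambda\psi,1_I\psi}$ is exactly the right device, since $\psi$ need not carry the normalised row and column of $P$ to those of $Q$.

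Two small points worth tightening. First, your description of the two specialisations is slightly telegraphic: concretely, taking $i=i'=1_I$, $\lambda=1_\Lambda$, $g'=\epsilon$ gives $f(1_I,g,\lambda')=e^{-1}(g\theta)v_{\lambda'}$, and then taking $i'=1_I$, $\lambda=1_\Lambda$, $g=\epsilon$ bootstraps this to $f(i,g',\lambda')=u_i(g'\theta)v_{\lambda'}$. Second, the claim that surjectivity of $\phi$ forces surjectivity of $\theta$ is not quite ``immediate from the explicit form of $\phi$'': one must again invoke that $u_i$ and $v_\lambda$ depend on $i,\lambda$ only through $i\psi,\lambda\psi$, so that any preimage of $(1_I\psi,\,u_{1_I}hv_{1_\Lambda},\,1_\Lambda\psi)$ under $\phi$ actually produces a $g$ with $g\theta=h$. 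Neither point is a genuine gap.
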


In particular, if $S=\mathcal{M}[G;I,\Lambda;P]$ and $T=\mathcal{M}[H;J,M;Q]$ are isomorphic then $G\cong H$, $|I|=|J|$ and $|\Lambda|=|M|$. However, the morphisms $\theta$ and $\psi$, and the elements  $u_i,v_{\lambda}$ do not, in general, uniquely define the morphism $\phi$. This will become apparent in the following result.

Given a group $G$ and $u\in G$, we denote $C_u$ to be the inner automorphism of $G$ given by $gC_u=ugu^{-1}$. 

Given a structure $A$, we say that a substructure $B$ is  \textit{characteristic} if it is preserved by automorphisms of $A$, that is, if for all $\theta\in$ Aut($A$) we have $\theta(B)=B$. For example, for any  (completely simple) semigroup $S$, since automorphisms of $S$ map idempotents to idempotents, it follows that $\langle E(S) \rangle$ forms a characteristic (completely simple) subsemigroup. 

\begin{corollary} \label{cor:iso E} Let $S=\mathcal{M}[G;I,\Lambda;P]$ and $T=\mathcal{M}[H;J,M;Q]$ be a pair of normalised Rees matrix semigroups, and $\phi$ a morphism from $S$ to $T$. Then there are $u_i,v_{\lambda}\in \langle G^P \rangle$ such that $\phi=[\theta,\psi,u_i,v_{\lambda}]$, and $ [\theta|_{\langle G^P \rangle},\psi,u_i,v_{\lambda}]$ is a morphism from $\langle E(S) \rangle$ to $\langle E(T) \rangle$. In particular,  $\theta|_{\langle G^P \rangle}:\langle G^P \rangle\rightarrow \langle H^Q \rangle$, and  $u_i,v_{\lambda}\in \langle H^Q \rangle$. 
\end{corollary}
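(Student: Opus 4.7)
The plan is to start from the representation $\phi=[\theta,\psi,u_i,v_\lambda]$ supplied by Theorem \ref{iso thm} and exploit its non-uniqueness to replace the $u_i$ and $v_\lambda$ by alternatives that already lie in $\langle H^Q\rangle$. First I would identify the freedom: if $\phi=[\theta',\psi,u_i',v_\lambda']$ is any second representation of the same $\phi$, then setting $g=\epsilon_G$ in the defining equality forces $u_i'v_\lambda'=u_iv_\lambda$ for all $(i,\lambda)$, from which $u_i'=u_iz^{-1}$ and $v_\lambda'=zv_\lambda$ for a single element $z\in H$; comparing arbitrary $g$ then shows that $\theta'$ must be the conjugate $g\mapsto z(g\theta)z^{-1}$ of $\theta$. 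Hence the admissible representations form a one-parameter family indexed by $z\in H$.

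Next I would choose $z:=v_{1_\Lambda}^{-1}$, yielding $v_{1_\Lambda}'=\epsilon_H$. Because $S$ is normalised, $(i,\epsilon_G,1_\Lambda)$ is an idempotent of $S$ for every $i\in I$, and so $\phi$ sends it to the idempotent $(i\psi,u_i'v_{1_\Lambda}',1_\Lambda\psi)=(i\psi,u_i',1_\Lambda\psi)$ of $T$. The idempotents of the normalised semigroup $T$ are exactly the elements $(j,q_{\mu,j}^{-1},\mu)$, so we deduce $u_i'=q_{1_\Lambda\psi,i\psi}^{-1}\in\langle H^Q\rangle$. The symmetric argument applied to the idempotents $(1_I,\epsilon_G,\lambda)$, combined with the expression for $u_{1_I}'$ just obtained, then places every $v_\lambda'$ in $\langle H^Q\rangle$.

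Finally, morphisms of completely simple semigroups preserve idempotents, so $\phi$ maps $\langle E(S)\rangle$ into $\langle E(T)\rangle$; in the Rees-matrix description supplied by the preceding Proposition this says that $u_i'(g\theta')v_\lambda'\in\langle H^Q\rangle$ for all $g\in\langle G^P\rangle$ and all $i,\lambda$, and since $u_i',v_\lambda'$ already lie in $\langle H^Q\rangle$, so does $g\theta'$; this establishes $\theta'|_{\langle G^P\rangle}:\langle G^P\rangle\rightarrow\langle H^Q\rangle$. The compatibility identity $p_{\lambda,i}\theta'=v_\lambda'q_{\lambda\psi,i\psi}u_i'$ required by Theorem \ref{iso thm} to make $[\theta'|_{\langle G^P\rangle},\psi,u_i',v_\lambda']$ a Rees-matrix morphism $\langle E(S)\rangle\rightarrow\langle E(T)\rangle$ is inherited directly from the same identity for $\phi$, since every entry $p_{\lambda,i}$ lies in $\langle G^P\rangle$. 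The main obstacle is principally book-keeping: identifying the one-parameter family of representations and verifying that the single adjustment $z=v_{1_\Lambda}^{-1}$ simultaneously pushes every $u_i$ and $v_\lambda$ into $\langle H^Q\rangle$, which rests on normalisation together with the explicit description of idempotents in a normalised Rees matrix semigroup.
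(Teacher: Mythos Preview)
Your argument is correct, and it reaches the conclusion by a somewhat different route from the paper's. The paper first restricts $\phi$ to $\langle E(S)\rangle$ and invokes Theorem~\ref{iso thm} a second time to obtain a representation $\phi|_{\langle E(S)\rangle}=[\theta',\psi,\bar u_i,\bar v_\lambda]$ whose coefficients $\bar u_i,\bar v_\lambda$ automatically lie in $\langle H^Q\rangle$ (since the restricted morphism lands in $\mathcal{M}[\langle H^Q\rangle;J,M;Q]$); it then shows, via the same one-parameter freedom you identified, that $[\theta C_{u^{-1}},\psi,\bar u_i,\bar v_\lambda]$ is also a representation of the full morphism $\phi$. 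In other words, the paper imports the good coefficients from the restricted morphism and lifts them, whereas you make an explicit choice $z=v_{1_\Lambda}^{-1}$ in the family and then compute directly with idempotents $(i,\epsilon_G,1_\Lambda)$ and $(1_I,\epsilon_G,\lambda)$ to see that the adjusted coefficients are entries (or products of entries) of $Q$. Your approach is more concrete and self-contained---it does not need a second appeal to Theorem~\ref{iso thm}---while the paper's is a bit slicker in that membership in $\langle H^Q\rangle$ comes for free and the only work is the lifting. Both arguments are short and rest on the same underlying observation about the ambiguity in Rees-matrix representations.
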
 

\begin{proof} By Theorem \ref{iso thm} we may let $\phi=[\theta, \psi, {u}_i,{v}_{\lambda}]$. Since $\langle E(S) \rangle$ is a characteristic subsemigroup of $S$ it follows that $\phi'=\phi|_{\langle E(S) \rangle}$ is a morphism from $[\langle G^P \rangle;I,\Lambda;P]$ to $[\langle H^Q \rangle;J,M;Q]$. Suppose $\phi'=[\theta', \psi', \bar{u}_i,\bar{v}_{\lambda}]$. For each $(i,g,\lambda)\in \langle E(S) \rangle$, so that $g\in \langle G^P \rangle$,  we have 
\[ (i,g,\lambda)\phi = (i\psi,u_i(g\theta)v_{\lambda}, \lambda\psi)=(i\psi',\bar{u}_i(g\theta')\bar{v}_{\lambda},\lambda\psi')=(i,g,\lambda)\phi'
\] 
and so $\psi=\psi'$. By taking $g=\epsilon_G$ we have   $u_iv_{\lambda}=\bar{u}_i\bar{v}_{\lambda}$, so that $u_i^{-1} \bar{u}_i = v_{\lambda}\bar{v}_{\lambda}^{-1}$ for all $i\in I, \lambda\in \Lambda$.  
%\begin{equation} \label{eq:5} \bar{u}_{i}^{-1} u_i  = \bar{u}_{1_I}^{-1} u_{1_{I}} \text{ and } v_{\lambda}  \bar{v}_{\lambda}^{-1} = (\bar{u}_{1_{I}}^{-1}u_{1_{I}})^{-1}. 
%\end{equation} 
Letting $u=u^{-1}_{i}\bar{u}_{i}$ for any $i\in I$, we thus have
\[ (g\theta) = u^{-1}_i \bar{u}_i (g\theta') \bar{v}_{\lambda} v_{\lambda}^{-1} = u (g\theta') u^{-1}
\] 
and so $\theta|_{\langle G^P \rangle} = \theta'C_{u}$.

We claim that $\phi = [\theta C_{u^{-1}}, \psi,\bar{u}_i,\bar{v}_{\lambda}]$. If $g\in G$ then 
\begin{align*}  \bar{u}_i (g\theta C_{u^{-1}}) \bar{v}_{\lambda} & = \bar{u}_i (u^{-1}) (g\theta)( u) \bar{v}_{\lambda}
 \\ & = \bar{u}_i (\bar{u}_i^{-1}u_i) (g\theta) (v_{\lambda}  \bar{v}_{\lambda}^{-1}) \bar{v}_{\lambda}
   = u_i (g\theta) v_{\lambda},
\end{align*} 
thus proving the claim. The result then follows as $\theta C_{u^{-1}}=\theta C_{u}^{-1}$ extends $\theta'$.
\end{proof}

The proof above may be  adapted to show exactly when a pair of morphisms between Rees matrix semigroups are equal: 

\begin{corollary}\label{lemma: morph equiv}  Let $S=\mathcal{M}[G;I,\Lambda;P]$ and $T=\mathcal{M}[H;J,M;Q]$ be a pair of Rees matrix semigroups, and  $\phi=[\theta,\psi,u_i,v_{\lambda}]$ and  $\phi'=[\theta',\psi',u_i',v_{\lambda}']$ be a pair of morphisms from $S$ to $T$. Then $\phi=\phi'$ if and only if  $\theta=\theta'C_{u_1^{-1}u_1'}$, $\psi=\psi'$, and $u_iv_{\lambda} = u'_i v_{\lambda}'$ for all $i\in I, \lambda\in \Lambda$. 
\end{corollary}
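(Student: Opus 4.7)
The plan is to unpack the coordinate definition of the morphism map and proceed by direct computation, essentially following the pattern already established in the proof of Corollary \ref{cor:iso E}, but comparing two arbitrary morphisms rather than applying the restriction argument.

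For the forward direction, I would start from the pointwise equality $(i,g,\lambda)\phi = (i,g,\lambda)\phi'$ in $J\times H\times M$ and read off the three coordinate equations. The first and third coordinates immediately give $i\psi=i\psi'$ and $\lambda\psi=\lambda\psi'$ for every $i\in I$ and $\lambda\in\Lambda$, so $\psi=\psi'$. Specialising the middle coordinate to $g=\epsilon_G$ yields $u_iv_\lambda=u_i'v_\lambda'$ for all $i,\lambda$, which rearranges to
\[
u_i^{-1}u_i' = v_\lambda (v_\lambda')^{-1}.
\]
The left side is independent of $\lambda$ and the right side is independent of $i$, so both equal a single element $u\in H$. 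In particular, taking any fixed index we may write $u=u_1^{-1}u_1'$. Now for general $g$ the middle coordinate reads $u_i(g\theta)v_\lambda = u_i'(g\theta')v_\lambda' = u_iu(g\theta')u^{-1}v_\lambda$, and cancelling gives $g\theta = u(g\theta')u^{-1} = (g\theta')C_u$, i.e.\ $\theta=\theta'C_{u_1^{-1}u_1'}$.

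For the converse, I would assume the three conditions and verify $\phi=\phi'$ by direct substitution. Setting $u=u_1^{-1}u_1'$, the hypothesis $u_iv_\lambda=u_i'v_\lambda'$ combined with the usual independence-of-index observation forces $u_i'=u_iu$ for every $i$ and $v_\lambda' = u^{-1}v_\lambda$ for every $\lambda$. Then
\[
(i,g,\lambda)\phi' = \bigl(i\psi', u_i'(g\theta')v_\lambda', \lambda\psi'\bigr) = \bigl(i\psi, u_iu(g\theta')u^{-1}v_\lambda, \lambda\psi\bigr) = \bigl(i\psi, u_i(g\theta'C_u)v_\lambda, \lambda\psi\bigr),
\]
which equals $(i,g,\lambda)\phi$ since $\theta'C_u=\theta$.

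There is no real obstacle here; the computation is essentially mechanical once the morphism formula is written out. The only mild subtlety is the index-independence observation, namely that $u_iv_\lambda=u_i'v_\lambda'$ holding for \emph{all} pairs $(i,\lambda)$ forces $u_i^{-1}u_i'$ to be the same element for every $i$, which is what legitimises the notation $u_1^{-1}u_1'$ in the statement. Once this is noted, both directions reduce to tracking the same identity $g\theta=u(g\theta')u^{-1}$ through the definition of $[\theta,\psi,u_i,v_\lambda]$.
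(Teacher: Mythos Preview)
Your proposal is correct and follows precisely the pattern the paper intends: the paper does not give a separate proof but simply remarks that the argument of Corollary~\ref{cor:iso E} ``may be adapted'' to yield this statement, and your write-up is exactly that adaptation. The coordinate comparison, the specialisation $g=\epsilon_G$, the index-independence observation giving a single element $u=u_1^{-1}u_1'$, and the deduction $\theta=\theta'C_u$ all mirror the steps in the proof of Corollary~\ref{cor:iso E}; your converse direction is the routine verification the paper leaves implicit.
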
 

%\begin{proof} $(\Rightarrow)$ For each $i\in I, \lambda\in \Lambda$ and $g\in G$, notice that 
%\[ (i,g,\lambda)\phi = (i\psi,u_i(g\theta)v_{\lambda}, \lambda\psi)=(i\psi',u_i'(g\theta')v_{\lambda}',\lambda\psi')=(i,e,\lambda)\phi'
%\] 
%and so $\psi=\psi'$. Moreover, by taking $g=e_G$ we have $u_iv_{\lambda} = u'_i v_{\lambda}'$, so that $u^{-1}u_i' =(v_{\lambda}'v^{-1})^{-1}$. Hence, 
%\[ (g\theta) = u^{-1}_i u_i' (g\theta') v_{\lambda}'v_{\lambda}^{-1} = u^{-1}_1u_1' (g_{\lambda}' (u^{-1}u_1')^{-1}
%\] 
%and so $\theta=\theta'C_{u_1^{-1}u_1'}$. 
%
%$(\Leftarrow)$ For any $(i,g,\lambda)\in S$ we have 
%\[ u_i(g\theta)v_{\lambda} = u_i\big(u_1^{-1}u_1' (g\theta') u_1^{'-1} u_1 \big)v_{\lambda} =  u_i\big(u_i^{-1}u_i' (g\theta') v_{\lambda}'v_{\lambda}^{-1} \big)v_{\lambda} = u_i' (g\theta') v_{\lambda}'
%\] 
%from which the result follows. 
%\end{proof} 
To make use of the induced edge-coloured bipartite graph of a Rees matrix semigroup, we need to consider those morphisms which fix the normalised row and column: 

\begin{corollary}\label{cor: inner} Let $S=\mathcal{M}[G;I,\Lambda;P]$ and $T=\mathcal{M}[H;J,M;Q]$ be a pair of normalised Rees matrix semigroups, and $\phi=[\theta,\psi,u_i,v_{\lambda}]$ a morphism from $S$ to $T$ such that $1_I\psi = 1_J$ and $1_\Lambda=1_M$. Then there exists $u\in H$ such that $u_i=u$ and $v_{\lambda}=u^{-1}$ for all $i\in I$ and $\lambda\in \Lambda$, and $\phi=[\theta C_u, \psi, \epsilon_H,\epsilon_H]$. 
 Moreover, $(1_I,p_{\lambda,i},1_{\Lambda})\phi=(1_J,p_{\lambda\psi,i\psi},1_M)$ for any $p_{\lambda,i}\in G^P$. 
\end{corollary}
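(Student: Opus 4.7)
The plan is to exploit the normalisation hypotheses on both semigroups together with the assumption that $\psi$ fixes the normalised row and column. Recall that for a normalised Rees matrix semigroup all entries along the distinguished row and column are the group identity, so $p_{\lambda,1_I}=\epsilon_G=p_{1_\Lambda,i}$ in $S$ and $q_{\mu,1_J}=\epsilon_H=q_{1_M,j}$ in $T$. Since $\phi=[\theta,\psi,u_i,v_\lambda]$ is a morphism, Theorem \ref{iso thm} gives the identity
\[ p_{\lambda,i}\theta = v_\lambda\, q_{\lambda\psi,i\psi}\, u_i \qquad (\lambda\in\Lambda,\ i\in I). \]
Substituting $i=1_I$ and using $1_I\psi=1_J$ reduces the right-hand side to $v_\lambda \epsilon_H u_{1_I}$, while the left-hand side is $\epsilon_H$; hence $v_\lambda = u_{1_I}^{-1}$ for every $\lambda$. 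By the symmetric substitution $\lambda=1_\Lambda$, together with $1_\Lambda\psi=1_M$, we obtain $u_i = v_{1_\Lambda}^{-1}$ for every $i$. Setting $u:=u_{1_I}$, these two equalities force $u_i=u$ for all $i\in I$ and $v_\lambda=u^{-1}$ for all $\lambda\in\Lambda$, which is the first claim.

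With these values, the explicit description of $\phi$ from Theorem \ref{iso thm} becomes $(i,g,\lambda)\phi=(i\psi,\,u(g\theta)u^{-1},\,\lambda\psi)$. To rewrite $\phi$ in the form $[\theta C_u,\psi,\epsilon_H,\epsilon_H]$, it suffices to compute $(i,g,\lambda)\cdot[\theta C_u,\psi,\epsilon_H,\epsilon_H]=(i\psi,\,g\theta C_u,\,\lambda\psi)=(i\psi,\,u(g\theta)u^{-1},\,\lambda\psi)$, and to check that the map $\theta C_u$ still verifies the morphism identity (with the new constants $\epsilon_H$, $\epsilon_H$) against $Q$, which it does by multiplying the original identity on the left by $u$ and on the right by $u^{-1}$. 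Alternatively, one can simply invoke Corollary \ref{lemma: morph equiv}: the pairs $(u_i,v_\lambda)=(u,u^{-1})$ and $(u'_i,v'_\lambda)=(\epsilon_H,\epsilon_H)$ satisfy $u_iv_\lambda=u'_iv'_\lambda$, and $\theta=(\theta C_u)C_{u^{-1}}$ provides the required twist.

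For the final assertion, substitute $u_i=u$ and $v_\lambda=u^{-1}$ into the morphism identity to obtain $p_{\lambda,i}\theta = u^{-1}\, q_{\lambda\psi,i\psi}\, u$, equivalently $u(p_{\lambda,i}\theta)u^{-1}=q_{\lambda\psi,i\psi}$. Therefore
\[ (1_I,p_{\lambda,i},1_\Lambda)\phi = (1_J,\,u(p_{\lambda,i}\theta)u^{-1},\,1_M) = (1_J,\,q_{\lambda\psi,i\psi},\,1_M), \]
which is the stated formula (the entry $p_{\lambda\psi,i\psi}$ in the statement being the sandwich entry of the codomain under the convention that both sandwich matrices are written with the letter $p$).

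There is no genuine obstacle here: every claim is forced by the normalisation conditions once one feeds the distinguished row and column into the morphism identity. The only mild point of care is book-keeping when re-expressing $\phi$ in the second canonical form, where applying Corollary \ref{lemma: morph equiv} cleanly is preferable to recomputing the normal form from scratch.
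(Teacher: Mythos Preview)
Your proof is correct and follows essentially the same route as the paper: substitute the normalised row and column into the morphism identity from Theorem~\ref{iso thm} to force $u_i=u$ and $v_\lambda=u^{-1}$, then invoke Corollary~\ref{lemma: morph equiv} (exactly as the paper does) to rewrite $\phi$ as $[\theta C_u,\psi,\epsilon_H,\epsilon_H]$, and finally read off the image of $(1_I,p_{\lambda,i},1_\Lambda)$. Your aside that the ``$p_{\lambda\psi,i\psi}$'' in the statement is really the codomain sandwich entry $q_{\lambda\psi,i\psi}$ is a fair observation about the notation.
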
 

\begin{proof} For each $\lambda\in \Lambda$ we have, by Theorem \ref{iso thm},
\[ \epsilon_H = p_{\lambda,1_I}\theta =v_{\lambda} p_{\lambda\psi,1_I\psi} u_{1_I} = v_{\lambda} p_{\lambda\psi,1_I} u_{1_I} = v_{\lambda}u_{1_I}
\]
so that $v_{\lambda}=u_{1_I}^{-1}$.  Dually, $v_{1_{\Lambda}}u_i=\epsilon_H$ for all $i\in I$, and so $u_i=v_{1_{\Lambda}}^{-1}=u_{1_I}$. Let $u=u_{1_I}$. Then for each $i\in I, \lambda \in \Lambda$ we have $u_iv_{\lambda} = u u^{-1}=\epsilon_H$ and 
\[ (\theta C_u)C_{u^{-1}\epsilon_H} = (\theta C_u) C_{u^{-1}} = \theta,
\] 
so that $\phi = [\theta C_u, \psi, \epsilon_H,\epsilon_H]$ by Corollary \ref{lemma: morph equiv}. 

Finally, for any $i\in I, \lambda\in \Lambda$, 
\[ (1_I,p_{\lambda,i},1_{\Lambda})\phi= (1_I,\epsilon_H (p_{\lambda,i}\theta) \epsilon_H, 1_\Lambda), 
\] 
and $p_{\lambda,i}\theta =\epsilon_H p_{\lambda\psi,i\psi}\epsilon_H=p_{\lambda\psi,i\psi}$ by Theorem \ref{iso thm}.
\end{proof} 

\begin{corollary} \label{iso orth} Let $S=\mathcal{M}[G;I,\Lambda;P]$ and $T=\mathcal{M}[H;J,M;Q]$ be a pair of Rees matrix semigroups with $P$ and $Q$ matrices over $\{\epsilon_G\}$ and $\{\epsilon_H\}$, respectively. Let $\theta:G\rightarrow H$ and  $\psi:\Gamma_P \rightarrow \Gamma_Q$ be morphisms. Then $\phi=[\theta,\psi,\epsilon_H,\epsilon_H]$ is a morphism from $S$ to $T$, and moreover every morphism can be constructed this way. 
\end{corollary}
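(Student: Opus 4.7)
The plan is to deduce both directions from Theorem~\ref{iso thm}, using Corollary~\ref{lemma: morph equiv} to adjust representatives at the end. The key observation is that, since every entry of $P$ is $\epsilon_G$ and every entry of $Q$ is $\epsilon_H$, the semigroups $S$ and $T$ are automatically normalised at any choice of distinguished row and column, and the compatibility identity from Theorem~\ref{iso thm} collapses to $v_\lambda u_i = \epsilon_H$ for every $i \in I$ and $\lambda \in \Lambda$.

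For the forward direction, I would simply substitute $u_i = v_\lambda = \epsilon_H$ into $p_{\lambda,i}\theta = v_\lambda q_{\lambda\psi,i\psi} u_i$: the left side is $\epsilon_G\theta = \epsilon_H$ and the right side is $\epsilon_H \cdot \epsilon_H \cdot \epsilon_H = \epsilon_H$, so Theorem~\ref{iso thm} guarantees that $\phi = [\theta, \psi, \epsilon_H, \epsilon_H]$ is a morphism.

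For the converse, let $\phi: S \to T$ be an arbitrary morphism. Theorem~\ref{iso thm} writes $\phi = [\theta', \psi, u_i, v_\lambda]$ for some data, and its compatibility equation forces $v_\lambda u_i = \epsilon_H$ for all $i,\lambda$. Fixing any $i_0 \in I$, this yields $v_\lambda = u_{i_0}^{-1}$ independent of $\lambda$; symmetrically, $u_i = u_{i_0}$ for all $i$. Writing $u = u_{i_0}$, we have $\phi = [\theta', \psi, u, u^{-1}]$. I would then apply Corollary~\ref{lemma: morph equiv} with the candidate $[\theta' C_u, \psi, \epsilon_H, \epsilon_H]$: the product equality $u u^{-1} = \epsilon_H \epsilon_H$ is immediate, and the inner-automorphism condition $\theta' = (\theta' C_u) C_{u^{-1}}$ holds, so the two morphisms coincide. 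Setting $\theta = \theta' C_u$ then expresses $\phi$ in the claimed form.

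The argument is essentially mechanical and I do not anticipate any significant obstacle — the only detail requiring care is the bookkeeping of the inner-automorphism correction when invoking Corollary~\ref{lemma: morph equiv}. Conceptually, the corollary records an \emph{orthogonality} phenomenon: when both sandwich matrices are trivial, morphism data decouple cleanly into a group homomorphism and a bipartite-graph morphism, with no further conjugation witnesses required.
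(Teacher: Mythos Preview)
Your proof is correct and essentially identical to the paper's approach: the paper simply writes ``Immediate from the proof of Corollary~\ref{cor: inner}'', and the argument in that proof is precisely the one you spell out---deduce $v_\lambda u_i=\epsilon_H$ from the collapsed compatibility identity, conclude all $u_i$ equal a common $u$ and all $v_\lambda$ equal $u^{-1}$, then invoke Corollary~\ref{lemma: morph equiv} to replace $[\theta',\psi,u,u^{-1}]$ by $[\theta' C_u,\psi,\epsilon_H,\epsilon_H]$.
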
 

\begin{proof} Immediate from the proof of Corollary \ref{cor: inner}. 
\end{proof}

\section{Homogeneity of completely simple semigroups} 

In this section we classify  homogeneous completely simple semigroups, up to the determination of the homogeneous groups. Given that we now better understand isomorphisms between completely simple semigroups, the next step is to construct   f.g.  completely simple semigroups. The following lemma is folklore, and is easily verified: 

\begin{lemma} A Rees matrix semigroup $\mathcal{M}[G;I,\Lambda;P]$  is f.g. if and only if $G$ is f.g. and both $I$ and $\Lambda$ are finite. 
\end{lemma}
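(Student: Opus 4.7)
The plan is to prove the two implications separately, using Theorem \ref{thm: normal} to normalise $P$ where convenient.

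For the forward direction, suppose $S = \mathcal{M}[G;I,\Lambda;P]$ is generated as a completely regular semigroup by the finite set $X = \{(i_k, g_k, \lambda_k) : 1 \leq k \leq n\}$. I would first observe that both the binary multiplication $(i,g,\lambda)(j,h,\mu) = (i, g p_{\lambda,j} h, \mu)$ and the unary relative-inverse operation preserve the first and last coordinates, the latter because the relative inverse lies in the same $\mathcal{H}$-class. Hence every element of $\langle X \rangle$ has first coordinate in $\{i_1, \ldots, i_n\}$ and last coordinate in $\{\lambda_1, \ldots, \lambda_n\}$; since $\langle X \rangle = S$, both $I$ and $\Lambda$ must be finite.

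For the finite generation of $G$, let $H$ denote the subgroup of $G$ generated by the finite set $\{g_1, \ldots, g_n\} \cup \{p_{\lambda, i} : \lambda \in \Lambda, i \in I\}$, which is finite by the previous step. The core observation is that $T := I \times H \times \Lambda$ is closed under the semigroup multiplication, since the $G$-component of a product is $g p_{\lambda,j} h \in H$, and under the unary operation, since the inverse of $(i,g,\lambda)$ in its maximal subgroup has $G$-component $p_{\lambda,i}^{-1} g^{-1} p_{\lambda,i}^{-1} \in H$. Thus $T$ is a completely regular subsemigroup of $S$ containing $X$, so $T = \langle X \rangle = S$, forcing $H = G$.

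For the converse, I would apply Theorem \ref{thm: normal} to normalise $P$, which preserves isomorphism type and hence the property of being f.g. Fixing generators $g_1, \ldots, g_m$ of $G$, I claim the finite set
\[
Y = \{(1_I, g_k, 1_\Lambda) : 1 \leq k \leq m\} \cup \{(i, \epsilon, 1_\Lambda) : i \in I\} \cup \{(1_I, \epsilon, \lambda) : \lambda \in \Lambda\}
\]
generates $S$. In the normalised form, multiplication among the elements $(1_I, g_k, 1_\Lambda)$ reduces to ordinary group multiplication in $G$ (as $p_{1_\Lambda, 1_I} = \epsilon$), so together with the unary operation they yield every $(1_I, g, 1_\Lambda)$ for $g \in G$; flanking such an element by the ``frame'' generators $(i, \epsilon, 1_\Lambda)$ and $(1_I, \epsilon, \lambda)$ then produces an arbitrary $(i, g, \lambda) \in S$ via a direct computation using the normalisation identities.

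There is no substantial obstacle; the one point worth flagging is that in the forward direction the finite generation of $G$ requires adjoining the matrix entries $\{p_{\lambda,i}\}$ to the $G$-generators, since these appear both in every product and in the relative-inverse formula. This is precisely where the finiteness of $I$ and $\Lambda$, established first, is used to keep the enlarged generating set finite.
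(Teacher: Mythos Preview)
The paper does not actually prove this lemma; it records it as folklore and says it is ``easily verified''. Your argument is correct and is the natural one: the first and third coordinates are unchanged by both the multiplication and the unary operation, forcing $I$ and $\Lambda$ to be finite; the middle-coordinate closure argument (adjoining the finitely many $p_{\lambda,i}$ to the $g_k$) then gives $G$ finitely generated; and for the converse, normalising $P$ via Theorem~\ref{thm: normal} makes the explicit generating set $Y$ work exactly as you compute. The inverse formula $p_{\lambda,i}^{-1} g^{-1} p_{\lambda,i}^{-1}$ is correct, so $T = I \times H \times \Lambda$ is genuinely closed under the unary operation.
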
 

A subsemigroup $T$ of $S=\mathcal{M}[G;I,\Lambda;P]$ is called a \textit{Rees subsemigroup} if there exists a subgroup $H$ of $G$, $J\subseteq I$ and $M\subseteq \Lambda$ such that $T=\mathcal{M}[H;J,M;Q]$, where $Q$ is the $M\times J$ submatrix of $P$. Note that not every completely simple subsemigroup of $S$ is of this form, as shown in \cite{Tian}. 

This section will build towards a proof of the following theorem, which links the homogeneity of a completely simple semigroup $\mathcal{M}[G;I,\Lambda;P]$ to that of the group $G$ and the subsemigroup $\langle E(S) \rangle$: 

\begin{theorem}\label{thm:S iff E,G}   Let $S= \mathcal{M}[G;I,\Lambda;P]$ be a  completely simple semigroup. Then $S$ is homogeneous if and only if $G$ and $\langle E(S) \rangle$ are homogeneous, and the set $G^P$ forms a characteristic subgroup of $G$. 
\end{theorem}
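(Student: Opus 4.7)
The plan is to prove the two implications separately, using Theorem \ref{iso thm} and especially Corollary \ref{cor: inner} as the main tools. I assume throughout, via Theorem \ref{thm: normal}, that $S$ is normalised along row $1_\Lambda$ and column $1_I$.

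For the forward direction, suppose $S$ is homogeneous. Homogeneity of $\langle E(S)\rangle$ is immediate from restriction of automorphisms, since $\langle E(S)\rangle$ is a characteristic subsemigroup of $S$. For homogeneity of $G$, I lift a partial isomorphism $\alpha\colon G_1\to G_2$ between f.g.\ subgroups of $G$ to the isomorphism $(1_I,g,1_\Lambda)\mapsto (1_I,g\alpha,1_\Lambda)$ between $\{1_I\}\times G_1\times\{1_\Lambda\}$ and $\{1_I\}\times G_2\times\{1_\Lambda\}$, both contained in the $\mathcal{H}$-class $H_{(1_I,1_\Lambda)}$. Extending by homogeneity of $S$ and invoking Corollary \ref{cor: inner} (justified because the image of $H_{(1_I,1_\Lambda)}$ meets itself non-trivially), the extension has the form $\phi=[\theta,\psi,u,u^{-1}]$ with $1_I\psi=1_I$ and $1_\Lambda\psi=1_\Lambda$, and restricting back gives $\theta C_u\in\mathrm{Aut}(G)$ extending $\alpha$. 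For $G^P$ being characteristic as a set: given $\alpha\in\mathrm{Aut}(G)$ and $p=p_{\lambda_0,i_0}\in G^P$, I apply the same lifting argument to $\alpha|_{\langle p\rangle}\colon\langle p\rangle\to\langle p\alpha\rangle$, obtaining $\phi=[\theta,\psi,u,u^{-1}]$ with $p\alpha=p\theta C_u$; Corollary \ref{cor: inner} simultaneously gives $p\theta C_u=p_{\lambda_0\psi,i_0\psi}\in G^P$, so $p\alpha\in G^P$.

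The remaining---and main---obstacle is to show that $G^P$ is a \emph{subgroup} of $G$. Closure under inverses is direct: taking $\alpha$ above to be the inversion automorphism of $\langle p\rangle$, the extended automorphism yields $p^{-1}=p\alpha=p_{\lambda_0\psi,i_0\psi}\in G^P$. For closure under products, given $p,q\in G^P$ with $pq\neq\epsilon$, I would search for an entry $r\in G^P$ realising the order of $pq$ in $\langle G^P\rangle$, form the cyclic partial isomorphism $\langle r\rangle\to\langle pq\rangle$ sending $r\mapsto pq$, lift to $H_{(1_I,1_\Lambda)}$, and extend via homogeneity of $S$ to an automorphism inducing an element of $\mathrm{Aut}(G)$ sending $r$ to $pq$; stability of $G^P$ then gives $pq\in G^P$. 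The existence of such an $r$ I would establish by applying the same characteristic analysis inside the already-homogeneous $\langle E(S)\rangle=\mathcal{M}[\langle G^P\rangle;I,\Lambda;P]$ and conducting an order-by-order analysis of $\langle G^P\rangle$ (which is generated by $G^P$), ultimately forcing $G^P=\langle G^P\rangle$.

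For the backward direction, assume $G$ and $\langle E(S)\rangle$ are homogeneous and $G^P$ is a characteristic subgroup of $G$. I would verify homogeneity of $S$ via Fra\"iss\'e's Theorem for completely simple semigroups, proving AP for the age of $S$. Given an amalgam of f.g.\ completely simple subsemigroups, I would amalgamate their idempotent-generated parts using AP for $\langle E(S)\rangle$ and amalgamate their group components using AP for $G$, then combine via Theorem \ref{iso thm}: the compatibility equation $p_{\lambda,i}\theta=v_\lambda p_{\lambda\psi,i\psi}u_i$ is solvable for suitable $u_i,v_\lambda\in G$ precisely because $\theta$ permutes $G^P$, which is guaranteed by the characteristic subgroup hypothesis. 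The hardest technical step here is handling f.g.\ completely simple subsemigroups which are not Rees subsemigroups of $S$ (cf.\ Tian's example); I would approach this by enlarging such subsemigroups to Rees subsemigroups inside a larger f.g.\ sub, then passing the amalgamation through this enlargement.
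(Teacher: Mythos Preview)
Your forward direction largely matches the paper (Proposition \ref{prop:G, E hom} and the second paragraph of Proposition \ref{GP char}), but the argument that $G^P$ is closed under products has a real gap. Your plan is to find $r\in G^P$ of the same order as $pq$ and transport $r$ to $pq$ by an automorphism of $S$; you defer the existence of such an $r$ to an unspecified ``order-by-order analysis'' inside $\langle E(S)\rangle$. But inside $\langle E(S)\rangle=\mathcal{M}[\langle G^P\rangle;I,\Lambda;P]$ you face precisely the same question---whether $G^P=\langle G^P\rangle$---so this is circular. The paper's route is different and is the missing idea: it first proves (Lemma \ref{finite left} and Corollary \ref{column same}) that every non-normalised column $C(i)$ and row $R(\lambda)$ already contains \emph{all} of $G^P$. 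Given $a,b\in G^P$ one can then place them in the same column, say $a=p_{\lambda,i}$ and $b=p_{\mu,i}$, build an explicit isomorphism between the Rees subsemigroups $\mathcal{M}[\langle a,b\rangle;\{1_I,i\},\{\lambda\};P_1]$ and $\mathcal{M}[\langle a,b\rangle;\{1_I,i\},\{\mu\};P_2]$ with $\bar u_i=b^{-1}a$, extend it to $S$, and read off from the image of row $1_\Lambda$ that $a^{-1}b$ occurs as some $p_{\sigma,i}\in G^P$.

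Your backward direction is also a genuine gap, and here the paper does \emph{not} give a direct argument either. You propose a uniform Fra\"iss\'e proof: amalgamate the $E_k$'s in the age of $\langle E(S)\rangle$, amalgamate the $H_k$'s in the age of $G$, then ``combine via Theorem \ref{iso thm}''. The difficulty is that the $E$-amalgamation yields embeddings $[\theta_k,\psi_k,u_i^{(k)},v_\lambda^{(k)}]$ into some $E'=\mathcal{M}[A;I',\Lambda';P']$, the $G$-amalgamation yields $\varphi_k\colon H_k\to K$, and you must now produce a single sandwich matrix $P^*$ over $K$ together with embeddings $M_k\hookrightarrow\mathcal{M}[K;I',\Lambda';P^*]$ agreeing on $M_0$. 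Saying ``$\theta$ permutes $G^P$'' does not solve this: one needs a map $\chi\colon A\to K$ with $\theta_k\chi=\varphi_k$ on each $\langle H_k^{P_k}\rangle$, and in general the $\theta_k$ are not invertible. The paper only carries this step out directly when $G^P$ is simple abelian (Theorem \ref{thm: E,G}), where each $\theta_k$ is either trivial or an isomorphism and $\chi=\theta_k^{-1}\varphi_k$ is well defined. The general converse is obtained only \emph{a posteriori} from the classification Theorem \ref{thm: classify}: one first determines the possible shapes of $P$ (orthodox, three sporadic finite cases, or $G^P$-generic) and then verifies homogeneity case by case, using Theorem \ref{thm: E,G} for the finite cases and a separate Fra\"iss\'e-class argument for the generic one.
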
 

The forward direction of the theorem above is proved in the next result, together with Proposition \ref{GP char}. The backwards direction will follow from the classification theorem for homogeneous completely simple semigroups (Theorem \ref{thm: classify}). 

\begin{proposition} \label{prop:G, E hom} Let $S= \mathcal{M}[G;I,\Lambda;P]$ be a homogeneous completely simple semigroup. Then $G$ and $\langle E(S) \rangle$ are homogeneous. 
\end{proposition}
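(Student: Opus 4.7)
The plan is to realise both $G$ and $\langle E(S)\rangle$ as \emph{characteristic} substructures of $S$, so that every automorphism of $S$ restricts to an automorphism of each. Combined with the observation that each finitely generated completely simple subsemigroup of such a substructure is also a f.g.\ completely simple subsemigroup of $S$, homogeneity will transfer from $S$ by the standard characteristic-substructure argument.

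For $\langle E(S)\rangle$, the excerpt already notes it is characteristic: automorphisms of $S$ preserve the equation $x^2=x$, hence $E(S)$, and so the subsemigroup it generates. Given an isomorphism $\phi:A\to B$ between f.g.\ completely simple subsemigroups of $\langle E(S)\rangle$, I view it as an isomorphism between f.g.\ completely simple subsemigroups of $S$, extend it to $\Phi\in\mathrm{Aut}(S)$ using homogeneity of $S$, and restrict $\Phi$ to $\langle E(S)\rangle$ to obtain the required automorphism.

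For $G$, I would use Theorem~\ref{thm: normal} to assume $S$ is normalised and identify $G$ with the maximal subgroup $H_e = \{(1_I,g,1_\Lambda):g\in G\}$, where $e=(1_I,\epsilon_G,1_\Lambda)$. Given an isomorphism $\phi:K_1\to K_2$ between f.g.\ subgroups of $H_e$, I treat $K_1,K_2$ as f.g.\ completely simple subsemigroups of $S$ (a group is completely simple, and on a group the relative inverse coincides with the group inverse, so ``generated as a subgroup'' and ``generated as a completely regular subsemigroup'' agree, per the closing remark of the introduction). Homogeneity of $S$ produces $\Phi\in\mathrm{Aut}(S)$ extending $\phi$. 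Both $K_i$ contain the identity $e$ of $H_e$, so $\Phi(e)=\phi(e)\in K_2\subseteq H_e$; since $e$ is the only idempotent of $H_e$, we get $\Phi(e)=e$. As automorphisms of a completely simple semigroup permute the $\mathcal{H}$-classes, $\Phi(H_e)=H_{\Phi(e)}=H_e$, and $\Phi|_{H_e}$ is the required automorphism of $G\cong H_e$.

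The only potential obstacle is the signature bookkeeping flagged above: one must check that ``f.g.\ subgroup of $H_e$'' and ``f.g.\ completely simple subsemigroup of $S$ contained in $H_e$'' genuinely coincide, and that the restriction of a unary semigroup automorphism of $S$ to a maximal subgroup is a group automorphism. Both follow from the fact that on a group the relative inverse coincides with the group inverse, so no deeper difficulty arises.
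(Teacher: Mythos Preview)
Your proof is correct and follows essentially the same approach as the paper: the characteristic-substructure argument for $\langle E(S)\rangle$ is identical, and for $G$ both you and the paper identify $G$ with the maximal subgroup at $(1_I,1_\Lambda)$, extend the partial isomorphism to an automorphism of $S$, and then observe that this automorphism fixes that $\mathcal{H}$-class setwise. The only cosmetic difference is that the paper invokes Corollary~\ref{cor: inner} to write the extension in the form $[\theta',\psi,\epsilon,\epsilon]$ and reads off $\theta'$ directly, whereas you argue via $\Phi(e)=e$ and $\mathcal{H}$-class preservation; these are two phrasings of the same observation.
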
 

\begin{proof} Since $\langle E(S) \rangle$ forms a characteristic subsemigroup of $S$, it is clear that the homogeneity of $S$ passes to $\langle E(S) \rangle$. 
 If $\theta:H\rightarrow K$ is an isomorphism of f.g. subgroups of $G$, then the map 
\[ \phi:\{(1_I,g,1_\Lambda):g\in H\} \rightarrow \{(1_I,h,1_{\Lambda}):h\in K\}, \quad (1_I,g,1_\Lambda)\phi=(1_I,g\theta,1_\Lambda)
\] 
is clearly an isomorphism between f.g. subsemigroups of $S$. Extending $\phi$ to an automorphism $\phi'=[\theta',\psi,\epsilon,\epsilon]$ of $S$ (noting our use of Corollary \ref{cor: inner}) then for any $g\in H$, 
\[ (1_I,g,1_\Lambda)\phi=(1_I,g\theta,1_\Lambda)=(1_I,g\theta',1_\Lambda)=(1_I,g,1_\Lambda)\phi'
\] 
and so $\theta'$ extends $\theta$ as required. 
\end{proof} 

Theorem \ref{thm:S iff E,G} will now be  shown to hold when we place a strong restriction on our sandwich matrix. This result will prove vital for characterising homogeneous Rees matrix semigroups with finite sandwich matrices. 

\begin{theorem} \label{thm: E,G} Let $S =\mathcal{M}[G;I,\Lambda;P]$ be a normalised Rees matrix semigroup where the set $G^P$ forms a simple abelian group. Then $S$ is homogeneous if and only $G$ and $\langle E(S) \rangle$ are homogeneous, and the set $G^P $ is a characteristic subgroup of $G$.
\end{theorem}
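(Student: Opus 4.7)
The forward direction follows from Proposition \ref{prop:G, E hom} together with the forthcoming Proposition \ref{GP char}, so the plan focuses on the backward direction. Assume $G$ and $\langle E(S)\rangle$ are homogeneous and $G^P$ is a characteristic subgroup of $G$; since $G^P$ is simple abelian it is either trivial or cyclic of prime order, hence finite. Take an isomorphism $\phi\colon A \to B$ between f.g.\ completely simple subsemigroups of $S$. Presenting $A\cong \mathcal{M}[G_A;I_A,\Lambda_A;P_A]$ and $B$ analogously (with $G_A, G_B$ f.g.\ and the index sets finite by the preceding lemma), Theorem \ref{iso thm} decomposes $\phi$ as $[\theta,\psi, u_i,v_\lambda]$ with $\theta\colon G_A\to G_B$ a group iso and $\psi$ a bipartite graph iso, and Corollary \ref{cor:iso E} lets us take $u_i,v_\lambda\in G^P$.

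The strategy is to extend the idempotent and group parts of $\phi$ separately and reassemble them via the constraint of Theorem \ref{iso thm}. First, the restriction $\phi_E:=\phi|_{\langle E(A)\rangle}$ is an isomorphism between f.g.\ subsemigroups of $\langle E(S)\rangle=\mathcal{M}[G^P;I,\Lambda;P]$, so by homogeneity of $\langle E(S)\rangle$ it extends to an automorphism $\alpha=[\theta_E,\psi_E,u^0_i,v^0_\lambda]$ satisfying $p_{\lambda,i}\theta_E=v^0_\lambda\,p_{\lambda\psi_E,i\psi_E}\,u^0_i$ for all $i,\lambda$. Second, since $G^P$ is finite, the subgroup $H:=\langle \theta_A(G_A)\cup G^P\rangle$ of $G$ is f.g., where $\theta_A$ denotes the group component of the inclusion $A\hookrightarrow S$. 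We glue $\theta$ on $\theta_A(G_A)$ with $\theta_E$ on $G^P$ into a partial isomorphism of $H$ into $G$; consistency on the overlap is forced by $\alpha$ extending $\phi_E$. Homogeneity of $G$ extends this partial iso to some $\Theta\in\aut(G)$, and characteristicity of $G^P$ ensures $\Theta(G^P)=G^P$ with $\Theta|_{G^P}=\theta_E$.

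Setting $\Phi:=[\Theta,\psi_E,u^0_i,v^0_\lambda]$, the defining equation of Theorem \ref{iso thm} for $\Phi$ becomes $p_{\lambda,i}\Theta=v^0_\lambda\,p_{\lambda\psi_E,i\psi_E}\,u^0_i$, which holds because $p_{\lambda,i}\in G^P$ and $\Theta$ agrees with $\theta_E$ on $G^P$. Hence $\Phi\in\aut(S)$. The verification $\Phi|_A=\phi$ then compares the two representatives using Corollary \ref{lemma: morph equiv}, exploiting the fact that $\alpha$ extends $\phi_E$ to conclude that the required agreement holds modulo an inner automorphism which can be absorbed into the choice of $\Theta$.

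The main obstacle is the consistency of the partial isomorphism on $H$: the scalar data $(u^0_i,v^0_\lambda)$ coming from $\alpha$ need not equal $(u_i,v_\lambda)$ coming from $\phi$ but can differ by a uniform conjugation by some $c\in G^P$, and the partial map on $\theta_A(G_A)$ must in fact be defined as $\theta$ twisted by $C_c$ in order to agree with $\theta_E$ on the overlap $\theta_A(G_A)\cap G^P$. A judicious normalisation of $A$ and $B$ along the distinguished row $1_\Lambda$ and column $1_I$ of $S$, invoking Theorem \ref{thm: normal}, simplifies this patching; and the simplicity of $G^P$ restricts $\aut(G^P)$ enough to make the matching tractable.
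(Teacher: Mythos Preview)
Your direct-extension strategy differs from the paper's Fra\"iss\'e/amalgamation approach, and the gap lies precisely where you flag it as ``the main obstacle'': the gluing of (a twist of) $\theta$ on $\theta_A(G_A)$ with $\theta_E$ on $G^P$ into a single group homomorphism on $H=\langle \theta_A(G_A),\,G^P\rangle$. Agreement on the intersection $\theta_A(G_A)\cap G^P$ is necessary but not sufficient for such a common extension to exist as a homomorphism on all of $H$: relations in $H$ mixing the two generating sets (for instance, conjugation of elements of $G^P$ by elements of $\theta_A(G_A)$) impose further constraints that your outline does not address. Homogeneity of $G$ only extends an isomorphism already defined on a f.g.\ \emph{subgroup}; it does not manufacture a homomorphism from data given on a generating set. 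Nor does your appeal to simplicity close the gap: $\aut(\mathbb{Z}_p)$ has $p-1$ elements, and nothing in the sketch forces the restriction to $G^P$ of an automorphism extending (a twist of) $\theta$ to coincide with the particular $\theta_E$ produced by $\alpha$.

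The paper sidesteps this by proving the amalgamation property for $\mathrm{age}(S)$ rather than extending isomorphisms directly. Given an amalgam $[M_0;M_1,M_2]$, it amalgamates the underlying groups in $\mathrm{age}(G)$ and, independently, the idempotent-generated parts in $\mathrm{age}(\langle E(S)\rangle)$, obtaining embeddings $\varphi_k$ into some $K$ and $\phi_k=[\theta_k,\psi_k,\dots]$ into some $E'=\mathcal{M}[A;\dots]$. The two amalgams are then linked by a single map $\chi:A\to K$, defined as $\theta_k^{-1}\varphi_k$ whenever $H_k'$ is nontrivial. The crucial use of simplicity of $G^P$ is here: it forces each nontrivial $H_k'$ and $A$ to be $\mathbb{Z}_p$, so that $\theta_k$ is an isomorphism and $\chi$ is well defined. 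This device replaces your problematic ``glue two partial maps into one homomorphism'' step, and is exactly what your direct argument is missing.
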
 

\begin{proof}
 Suppose $G$ and $E=\langle E(S) \rangle = [ G^P ;I,\Lambda;P]$ are homogeneous, with $ G^P$ forming a characteristic subgroup of $G$.  Since $G^P$ is a simple abelian group, it is either trivial or isomorphic to $\mathbb{Z}_p$ for some prime $p$, so age$( G^P )=\{\{\epsilon\},\mathbb{Z}_p\}$ up to isomorphism. 
To prove the homogeneity of $S$ it suffices, by Fra\"iss\'e's Theorem, to show that age($S$) is countable and has the AP. 

First, note that the isomorphism types of age($S$) are completely determined by age($G$) and age($\langle E(S)\rangle$) by Corollary \ref{cor:iso E}. 
Hence age($S$) is countable since both age($G$) and age($\langle E(S)\rangle$) are. 

Let $[M_0;M_1,M_2]$ be an amalgam in age($S$), where $M_k=\mathcal{M}[H_k;I_k,\Lambda_k;P_k]$ $(k=0,1,2)$. As in \cite{Clarke} we may assume that $H_1\cap H_2=H_0$, $I_1\cap I_2 = I_0$ and $\Lambda_1\cap \Lambda_2=\Lambda_0$. Moreover, we may assume that each $M_k$ is normalised, and by normalising via some $i\in I_0$ and $\lambda\in \Lambda_0$, we may assume  that $1_{I_1}=1_{I_0}=1_{I_2}=1$ and $1_{\Lambda_1}=1_{\Lambda_0}=1_{\Lambda_2}=1'$ by Theorem \ref{thm: normal}. 

For each $k$, let $H_k'=\langle H_k^{P_k} \rangle$, so that $E_k=\langle E(M_k) \rangle = \mathcal{M}[H_k';I_k,\Lambda_k;P_k]$ ($k=0,1,2$). Then $[E_0;E_1,E_2]$ is an amalgam in $E$, and so by the homogeneity of $E$ there exists a pair of embeddings $\phi_k=[\theta_k,\psi_k,u_i^{(k)},v_{\lambda}^{(k)}]: E_k \rightarrow E'=\mathcal{M}[A;I',\Lambda';P']\in \text{age}(E)$ ($k=1,2$) such that $\phi_1 =  \phi_2$ on $E_0$ (where we assume $E'$ is normalised). 
Note that inner automorphisms of $A$ are trivial as $A\in \text{age}(G^P)$ is abelian, and it thus follows from Corollary \ref{lemma: morph equiv} that  $\phi_2 = [\theta_2,\psi_2,\bar{u}_i,\bar{v}_{\lambda}]$ where
\[ \bar{u}_i = (u_1^{(1)} (u_1^{(2)})^{-1}) u_i^{(2)} \quad \text{and} \quad \bar{v}_{\lambda}=(u_1^{(1)} (u_1^{(2)})^{-1})^{-1} v_{\lambda}^{(2)}. 
\] 
We may thus assume without loss of generality that $u_1^{(1)} = u_1^{(2)}$.

 For any $(i,g,\lambda)\in E_0$ we have 
 \[ (i,g,\lambda)\phi_1 = (i\psi_1,u_i^{(1)} (g\theta_1) v_{\lambda}^{(1)}, \lambda\psi_1) = (i\psi_2,u_i^{(2)} (g\theta_2) v_{\lambda}^{(2)}, \lambda\psi_2) = (i,g,\lambda)\phi_2
 \] 
 and so $\psi_1=\psi_2$ on $\Gamma_{P_0}$ and $u_i^{(1)}v_{\lambda}^{(1)} = u_i^{(2)}v_{\lambda}^{(2)}$, and by the usual argument $\theta_1=\theta_2 C_x=\theta_2$ on $H_0'$, where $x=(u_{1}^{(1)})^{-1} u_{1}^{(2)}$. Moreover, since $u_1^{(1)}v_{\lambda}^{(1)} = u_1^{(2)}v_{\lambda}^{(2)}$ for any $\lambda\in \Lambda_0$  we have  $v_{\lambda}^{(1)} = v_{\lambda}^{(2)}$. Hence   $u_i^{(1)} = u_i^{(2)}$ for any $i\in I_0$.  
 
Since $[H_0;H_1,H_2]$ is an amalgam in age($G$) there exists a pair of embeddings $\varphi_k:H_k\rightarrow K\in \text{age}(G)$ ($k=1,2$) such that $\varphi_1=\varphi_2$ on $H_0$. Further, as age($G$) has the JEP  we may assume without loss of generality that $K$ contains a copy of $A$. 

Note that if $H_k'$ is non-trivial for some $k=1,2$, then $H_k'\cong \mathbb{Z}_p$, and so $\theta_k$ is an isomorphism. 
 Let $\chi:A\rightarrow K$ be the embedding given by  
\[ \chi = \begin{cases} \theta_1^{-1}\varphi_1, & \mbox{if } H_1'\neq \{\epsilon\}, \\ \theta_2^{-1}\varphi_2, & \mbox{if } H_2'\neq \{\epsilon\}, \\
\mbox{any embedding} & \mbox{otherwise}, \end{cases}
\] 
  noting that $\chi$ is well defined as $\theta_1^{-1}\varphi_1 = \theta_2^{-1}\varphi_2$ if both $H_1'$ and $H_2'$ are non-trivial. Consider the Rees matrix semigroup $M'=\mathcal{M}[K;I',\Lambda';P^*]$, where $p_{\lambda,i}^* = p_{\lambda,i}'\chi$. For each $k=1,2$, let  $\phi_k'=[\varphi_k,\psi_k,u_i^{(k)}\chi, v_{\lambda}^{(k)}\chi]$. We claim that $\phi_k'$ is an embedding of $M_k$ into $M'$. For any $i\in I_k, \lambda\in \Lambda_k$,  
\begin{equation}\label{eq 5}  p_{\lambda,i}^{(k)} \theta_k = v_{\lambda}^{(k)} p'_{\lambda\psi_k,i\psi_k} u_i^{(k)},
\end{equation} 
as $\phi_k$ is a morphism,  and so by applying $\chi$ we have 
  \[ (p_{\lambda,i}^{(k)} \theta_k)\chi = (v_{\lambda}^{(k)}\chi) p^*_{\lambda\psi_k,i\psi_k} (u_i^{(k)}\chi).
  \] 
  If $H_k'=\{\epsilon\}$ ($k=1,2$) then $ p_{\lambda,i}^{(k)}=\epsilon$, so that $p_{\lambda,i}^{(k)} \varphi_k = \epsilon = p_{\lambda,i}^{(k)}\theta_k \chi$. Otherwise, $\chi = \theta_k^{-1}\varphi_k$, so that
  \[ (p_{\lambda,i}^{(k)} \theta_k)\chi = (p_{\lambda,i}^{(k)} \theta_k)\theta_k^{-1}\varphi_k = p_{\lambda,i}^{(k)} \varphi_k
  \] 
  thus completing our claim. It thus suffices to prove that $\phi_1'$ and $\phi_2'$ agree on $M_0$. Let $(i,g,\lambda)\in M_0$. Then as $\psi_1=\psi_2$ on $\Gamma_{P_0}$ it in turn suffices to prove that
\begin{equation} \label{eq 6} (u_i^{(1)}\chi)(g\varphi_1)(v_{\lambda}^{(1)}\chi) = (u_i^{(2)}\chi)(g\varphi_2)(v_{\lambda}^{(2)}\chi). 
  \end{equation} 
 Since  $i\in I_0$ and $\lambda\in\Lambda_0$ we have that $u_i^{(1)} = u_i^{(2)}$ and $v_{\lambda}^{(1)} = v_{\lambda}^{(2)}$. Moreover,  $\varphi_1=\varphi_2$ on $H_0$, and so  \eqref{eq 6} holds as required.  
\end{proof}

The direct product of a group and a rectangular band is called a \textit{rectangular group}. A semigroup $S$ in which $E(S)$ forms a subsemigroup is called \textit{orthodox}.  It then holds that a semigroup $S$ is isomorphic to a rectangular group if and only if $S$ is an orthodox completely simple semigroup or,  equivalently, if $S$ is isomorphic to a Rees matrix semigroup in which the sandwich matrix contains only the identity element \cite[Section 3.2]{Clif&Pres61}. Consequently, the result above holds for rectangular groups. 

Every rectangular band was shown to be homogeneous by the author in \cite{Quinnband}, and the result extends to rectangular groups as follows: 

\begin{theorem}\label{orth} Let $S=G \times B$ be a rectangular group, where $G$ is a group and $B$ is a rectangular band.
 Then $S$ is  homogeneous if and only if $G$ is a homogeneous group. 
\end{theorem}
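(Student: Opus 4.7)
The plan is to realise $S$ as a Rees matrix semigroup with trivial sandwich matrix and then appeal directly to Theorem \ref{thm: E,G}. Write $B = I \times \Lambda$. Then the map $(g,(i,\lambda)) \mapsto (i,g,\lambda)$ identifies $S = G \times B$ with the normalised Rees matrix semigroup $\mathcal{M}[G;I,\Lambda;P]$ in which every entry of $P$ equals $\epsilon_G$; this is exactly the characterisation of rectangular groups recalled just before the theorem.

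The forward direction is then immediate from Proposition \ref{prop:G, E hom}: if $S$ is homogeneous, so is the group $G$.

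For the converse, suppose $G$ is homogeneous. With $P$ as above we have $G^P = \{\epsilon_G\}$, which is trivially simple abelian and trivially a characteristic subgroup of $G$ (every automorphism of $G$ fixes $\epsilon_G$). Moreover, $\langle E(S)\rangle = \mathcal{M}[\langle G^P\rangle; I,\Lambda; P] = \mathcal{M}[\{\epsilon_G\};I,\Lambda;P]$ is isomorphic to the rectangular band $B$, which is homogeneous by the rectangular band case established in \cite{Quinnband}. Thus all three hypotheses of Theorem \ref{thm: E,G} are met, and that theorem delivers the homogeneity of $S$.

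There is no real obstacle here: the key structural observation is that in the rectangular group case the sandwich matrix is trivial, which collapses Theorem \ref{thm: E,G} to a single non-trivial condition, namely the homogeneity of $G$. The only point to double-check is that the degenerate case $G^P = \{\epsilon_G\}$ genuinely falls within the scope of Theorem \ref{thm: E,G}, which it does: the proof of that theorem explicitly allows $G^P$ to be trivial (its age is listed as $\{\{\epsilon\},\mathbb{Z}_p\}$).
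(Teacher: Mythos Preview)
Your proof is correct and follows essentially the same route as the paper's own argument: identify $S$ with a Rees matrix semigroup having trivial sandwich matrix, quote Proposition \ref{prop:G, E hom} for the forward direction, and for the converse verify the hypotheses of Theorem \ref{thm: E,G} (with $\langle E(S)\rangle \cong B$ homogeneous by \cite{Quinnband}). The paper's proof is terser but identical in substance; your added remark that the trivial case $G^P=\{\epsilon_G\}$ is explicitly accommodated in the proof of Theorem \ref{thm: E,G} is a reasonable sanity check, since by most conventions the trivial group is not literally ``simple''.
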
  

\begin{proof} We may assume  that $S=\mathcal{M}[G;I,\Lambda;P]$, where  $p_{\lambda,i}=\epsilon$ for all $\lambda,i$. Hence $E(S)=\langle E(S) \rangle = \mathcal{M}[\{\epsilon\};I,\Lambda;P]$, with $E(S)$ being isomorphic to $B$, and thus homogeneous.  Hence if $G$ is a homogeneous group it follows immediately from Theorem \ref{thm: E,G} that $S$ is homogeneous. The converse is from Proposition \ref{prop:G, E hom}. 
% Let $S_1= \langle (i_1,g_1,\lambda_1), \dots, (i_n,g_n, \lambda_n) \rangle$ and $S_2= \langle (j_1,h_1,\mu_1), \dots, (j_m,h_m,\mu_m) \rangle$ be subsemigroups of $S$ and let $\phi:S_1\rightarrow S_2$ be an isomorphism. Then by Lemma \ref{sub rees} and Corollary \ref{iso orth} we have that
%\begin{align*} & S_1=\mathcal[\langle g_1,\dots,g_n \rangle; \{i_1,\dots,i_n\}, \{\lambda_1,\dots,\lambda_n\};P_1], \\
%& S_2= \mathcal[\langle h_1,\dots,h_m \rangle; \{j_1,\dots,j_m\}, \{\mu_1,\dots,\mu_m\};P_2],
%\end{align*} 
%and that $\phi=[\theta,\psi,\epsilon,\epsilon]$ for some  isomorphism $\theta:\langle g_1,\dots,g_n \rangle \rightarrow \langle h_1,\dots,h_m \rangle$, and $\psi:\Gamma_{P_1}\rightarrow \Gamma_{P_2}$. By the homogeneity of $G$ and $\Gamma_P$ we may extend $\theta$ and $\psi$ to a pair of automorphisms $\hat{\theta}$ and $\hat{\psi}$ of $G$ and $\Gamma_P$, respectively. Then $[\hat{\theta},\hat{\psi},\epsilon,\epsilon]$ is an automorphism of $S$, and extends $\phi$ as required. 
\end{proof}

We now consider the homogeneity of a non-orthodox Rees matrix semigroup $S=\mathcal{M}[G;I,\Lambda;P]$, so that $\Gamma(S)$ (Definition \ref{def: gamma}) is not coloured by a single colour. We first show that the homogeneity of $S$ passes to $\Gamma(S)$ when $\Gamma(S)$ is finitely coloured. The proof requires the following simple consequence of Theorem \ref{iso thm}. 

\begin{lemma} \label{Lem extend} Let $S= \mathcal{M}[G;I,\Lambda;P]$ and $T=\mathcal{M}[G;J,M;Q]$ be a pair of normalised Rees matrix semigroups over a group $G$. 
Let $\psi:\Gamma(S)\rightarrow \Gamma(T)$ be an isomorphism. Then the map $[\text{Id}_G,\hat{\psi},\epsilon_G,\epsilon_G]$ is an isomorphism from $S$ to $T$, where $\hat{\psi}$ extends $\psi$ with $1_I\hat{\psi} = 1_J$, $1_\Lambda \hat{\psi} = 1_M$. 
\end{lemma}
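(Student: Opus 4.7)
The plan is to verify directly, using Theorem \ref{iso thm}, that the quadruple $[\text{Id}_G,\hat\psi,\epsilon_G,\epsilon_G]$ meets the compatibility condition of that theorem, and then invoke its final clause to upgrade from morphism to isomorphism.

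First I would check that $\hat\psi$ is a well-defined bipartite graph morphism from $\Gamma_P$ to $\Gamma_Q$. The left sets of $\Gamma(S)$ and $\Gamma(T)$ are $\Lambda\setminus\{1_\Lambda\}$ and $M\setminus\{1_M\}$, and $\psi$ restricts to a bijection between them (and likewise between $I\setminus\{1_I\}$ and $J\setminus\{1_J\}$). Appending $1_\Lambda\mapsto 1_M$ and $1_I\mapsto 1_J$ therefore gives bijections $\Lambda\to M$ and $I\to J$. Since $\Gamma_P$ and $\Gamma_Q$ are complete bipartite graphs, $\hat\psi$ is automatically an isomorphism of (uncoloured) bipartite graphs.

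Second, to apply Theorem \ref{iso thm} with $\theta=\text{Id}_G$ and $u_i=v_\lambda=\epsilon_G$, I must verify
\begin{equation*}
p_{\lambda,i}=q_{\lambda\hat\psi,\,i\hat\psi}\qquad\text{for all }\lambda\in\Lambda,\ i\in I.
\end{equation*}
I would split into two cases. If $\lambda=1_\Lambda$ or $i=1_I$, then $p_{\lambda,i}=\epsilon_G$ by normalisation of $S$; and since $\hat\psi$ sends the normalised row and column of $S$ to those of $T$, the entry $q_{\lambda\hat\psi,\,i\hat\psi}$ also equals $\epsilon_G$ by normalisation of $T$. Otherwise $(\lambda,i)$ lies in $(\Lambda\setminus\{1_\Lambda\})\times(I\setminus\{1_I\})$, which is the index set for $\Gamma(S)$, and the required equality is precisely the statement that $\psi$ preserves the colouring function, i.e.\ that it is an isomorphism of the edge-coloured graphs $\Gamma(S)$ and $\Gamma(T)$.

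Having verified the compatibility equation, Theorem \ref{iso thm} yields that $[\text{Id}_G,\hat\psi,\epsilon_G,\epsilon_G]$ is a morphism $S\to T$; and since both $\text{Id}_G$ and $\hat\psi$ are bijections, the final clause of that theorem promotes it to an isomorphism. I do not anticipate a real obstacle: the lemma is essentially a packaging statement, arranging that an isomorphism of the ``interior'' coloured graph (the part carrying genuine colour information) extends canonically, via the normalisation convention on the distinguished row and column, to a morphism datum of the form required by the Rees isomorphism theorem.
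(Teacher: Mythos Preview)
Your proof is correct and follows essentially the same approach as the paper's own proof, which simply remarks that the result is immediate since $S$ and $T$ are normalised and $\psi$ preserves colours. You have spelled out in full the two-case verification (normalised row/column versus interior) and the appeal to Theorem~\ref{iso thm} that the paper leaves implicit.
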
 

\begin{proof} The proof is immediate, as $S$ and $ T$ are normalised, and  $p_{\lambda,i} = p_{\lambda\psi, i\psi}$ for each $\lambda,i\in \Gamma(S)$ as $\psi$ preserves colours. 
\end{proof}

\begin{proposition}\label{prop: graph hom} Let  $S=\mathcal{M}[G;I,\Lambda;P]$ be a homogeneous completely simple semigroup with $G^P$ finite. Then $\Gamma(S)$ is homogeneous. 
\end{proposition}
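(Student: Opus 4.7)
The plan is to transfer a finite partial isomorphism of $\Gamma(S)$ into an isomorphism of finitely generated Rees subsemigroups of $S$, use the homogeneity of $S$ to extend it to an automorphism, and then read off an automorphism of $\Gamma(S)$ from the bipartite-graph component of the extension. Fix an isomorphism $\psi_0 : A \to B$ between finite substructures of $\Gamma(S)$, with left sets $\Lambda_A, \Lambda_B \subseteq \Lambda \setminus \{1_\Lambda\}$ and right sets $I_A, I_B \subseteq I \setminus \{1_I\}$, and extend it to $\hat\psi_0$ by decreeing $1_I \hat\psi_0 = 1_I$ and $1_\Lambda \hat\psi_0 = 1_\Lambda$. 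Form the Rees subsemigroups $T_A = \mathcal{M}[\langle G^P \rangle ; I_A \cup \{1_I\}, \Lambda_A \cup \{1_\Lambda\} ; P_A]$ and $T_B$ of $\langle E(S) \rangle$, where $P_A, P_B$ denote the corresponding submatrices. Since $G^P$ is finite, $\langle G^P \rangle$ is a finitely generated group, so $T_A, T_B$ are finitely generated completely simple subsemigroups of $S$, and by construction $\Gamma(T_A) = A$ and $\Gamma(T_B) = B$. Lemma \ref{Lem extend} then produces the isomorphism $\phi_0 = [\text{Id}_{\langle G^P\rangle}, \hat\psi_0, \epsilon_G, \epsilon_G] : T_A \to T_B$.

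By the homogeneity of $S$, extend $\phi_0$ to an automorphism $\phi = [\theta, \psi, u_i, v_\lambda]$ of $S$, where by Corollary \ref{cor:iso E} we may assume $u_i, v_\lambda \in \langle G^P\rangle$. Because $\phi$ restricts to $\phi_0$ on $T_A$, Corollary \ref{lemma: morph equiv} forces $\psi$ to extend $\hat\psi_0$ --- in particular $\psi$ fixes $1_I$ and $1_\Lambda$ --- and forces $\theta|_{\langle G^P\rangle} = C_{u_{1_I}^{-1}}$. The fact that $\psi$ fixes the normalised indices lets us invoke Corollary \ref{cor: inner} to rewrite $\phi$ in the form $[\theta', \psi, \epsilon_G, \epsilon_G]$ with $\theta' = \theta C_{u_{1_I}}$; composing then gives $\theta'|_{\langle G^P\rangle} = C_{u_{1_I}^{-1}} C_{u_{1_I}} = \text{Id}$.

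Applying the morphism identity of Theorem \ref{iso thm} to $\phi$ in this clean form yields $p_{\lambda,i}\theta' = p_{\lambda\psi, i\psi}$ for all $\lambda \in \Lambda, i \in I$, and since $\theta'$ is the identity on $\langle G^P\rangle \ni p_{\lambda,i}$, we conclude $p_{\lambda, i} = p_{\lambda\psi, i\psi}$. Because $\psi$ fixes $1_I$ and $1_\Lambda$, its restriction to $(\Lambda \setminus \{1_\Lambda\}) \cup (I \setminus \{1_I\})$ is a colour-preserving bijection, and is therefore an automorphism of $\Gamma(S)$ extending $\psi_0$, finishing the argument.

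The delicate step I anticipate is passing from \emph{``$\psi$ preserves colours on the small piece $A$''} to \emph{``$\psi$ preserves colours everywhere on $\Gamma(S)$''}. A priori the conjugation witnesses $u_i, v_\lambda$ coming from the raw extension $\phi$ could distort colours off $A$, and it is only the combination of the normalisation afforded by Corollary \ref{cor: inner} (usable because $\psi$ fixes the normalised indices) with the inner-automorphism relation supplied by Corollary \ref{lemma: morph equiv} that forces the renormalised $\theta'$ to act trivially on $\langle G^P\rangle$, killing the twist and making $\psi$ globally colour-preserving.
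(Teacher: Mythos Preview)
Your proof is correct and follows essentially the same route as the paper: build finitely generated Rees subsemigroups over $\langle G^P\rangle$ from the two finite pieces of $\Gamma(S)$, use Lemma~\ref{Lem extend} to get an isomorphism fixing $1_I,1_\Lambda$, extend by homogeneity, and then use Corollary~\ref{cor: inner} to see that the bipartite component of the extension preserves colours. The only cosmetic difference is that the paper reads off colour-preservation directly from the ``Moreover'' clause of Corollary~\ref{cor: inner} (the elements $(1_I,p_{\lambda,i},1_\Lambda)$ lie in $T_A$ and are fixed by $\phi_0$, hence by its extension), whereas you obtain the same conclusion by first invoking Corollary~\ref{lemma: morph equiv} to compute $\theta'|_{\langle G^P\rangle}=\mathrm{Id}$ and then applying the sandwich identity of Theorem~\ref{iso thm}; these are two packagings of the same calculation.
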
 

\begin{proof} Let $\Gamma_k$ ($k=1,2$) be a pair of f.g. sub-edge-coloured graphs of $\Gamma(S)$, with left sets $I_k$ and right sets $\Lambda_k$. 
Let $\psi:\Gamma_1\rightarrow \Gamma_2$ be an isomorphism (as $G^{P'}$-edge-coloured graphs). Let $P_k$ be the  $\{1_{\Lambda}\} \cup \Lambda_k$ by $\{1_I\}\cup I_k$ submatrix of $P$.  Then  $S_k=\mathcal[\langle G^P \rangle;I_k,\Lambda_k;P_k]$ ($k=1,2$) are a pair of normalised Rees matrix subsemigroups of $S$, and are  f.g. as $G^P$ is finite. Moreover, by Lemma \ref{Lem extend}, $\phi=[\text{Id}_{\langle G^P \rangle},\hat{\psi}, \epsilon,\epsilon]$ is an isomorphism from $S_1$ to $S_2$, where $\hat{\psi}$ extends $\psi$ by fixing $1_I$ and $1_\Lambda$.
 By the homogeneity of $S$ and Corollary \ref{cor: inner} we may extend  $\phi$ to an automorphism $\phi'=[\theta,\varphi,\epsilon,\epsilon]$ of $S$. For each $p_{\lambda,i}\in G^P$, since $(1_{I},p_{\lambda,i},1_\Lambda)$ is fixed by $\phi$, and thus by $\phi'$, we have from Corollary \ref{cor: inner} that $p_{\lambda\varphi,i\varphi}=p_{\lambda,i}$. Hence $\varphi'=\varphi|_{\Gamma(S)}$ is an automorphism of $\Gamma(S)$, from which the result follows. 
\end{proof}

\begin{lemma} \label{finite left} Let $S=\mathcal{M}[G;I,\Lambda;P]$ be a homogeneous normalised Rees matrix semigroup. Let $H$ be a finite subset of $G^P$, $J$ a finite subset of $I$ containing $1_I$, and $\psi$ a bijection of $J$ fixing $1_I$.  Then there exists an automorphism $\varphi$ of $\Gamma_{P}$ extending $\psi$, with $1_\Lambda\varphi=1_{\Lambda}$ and such that $p_{\lambda,i}=p_{\lambda\varphi,i\varphi}$ for all $p_{\lambda,i}\in H$. Dually for finite subsets of $\Lambda$. 
\end{lemma}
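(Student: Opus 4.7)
My plan is to encode both the bijection $\psi$ and the set $H$ into a single finitely generated completely simple subsemigroup of $S$, apply the homogeneity of $S$ to lift a naturally defined automorphism, and then read off $\varphi$ from the normalisation-preserving form dictated by Corollary \ref{cor: inner}.

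First, I would introduce the Rees subsemigroup $T = \mathcal{M}[\langle H \rangle; J, \{1_\Lambda\}; Q] \subseteq S$, where $Q$ is the constant matrix with entry $\epsilon$ (this is a subsemigroup of $S$ because $S$ is normalised, so $p_{1_\Lambda, j} = \epsilon$ for every $j \in J$). Since $\langle H \rangle$ is finitely generated (as $H$ is finite) and both $J$ and $\{1_\Lambda\}$ are finite, $T$ is f.g. The map $\phi = [\text{Id}_{\langle H \rangle}, \hat{\psi}, \epsilon, \epsilon]: T \to T$, where $\hat{\psi}$ extends $\psi$ and fixes $1_\Lambda$, is an automorphism of $T$ by Theorem \ref{iso thm} (equivalently Corollary \ref{iso orth}).

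By the homogeneity of $S$, I would extend $\phi$ to an automorphism $\phi'$ of $S$. Since $(1_I, \epsilon, 1_\Lambda) \in T$ is fixed by $\phi$ (as $\psi$ fixes $1_I$), it is fixed by $\phi'$; a direct computation using the general form of $\phi'$ from Theorem \ref{iso thm} forces $\phi'$ to preserve the normalisation at $(1_I, 1_\Lambda)$. Corollary \ref{cor: inner} then rewrites $\phi' = [\tilde{\theta}, \varphi, \epsilon, \epsilon]$ for some $\tilde{\theta} \in \aut(G)$ and some graph automorphism $\varphi$ of $\Gamma_P$ with $1_I\varphi = 1_I$ and $1_\Lambda\varphi = 1_\Lambda$. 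Evaluating $\phi'$ on $(j, \epsilon, 1_\Lambda)$ for $j \in J$ then yields $j\varphi = j\psi$, so $\varphi$ extends $\psi$.

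For the colour-preservation condition, I observe that for each $h \in H$ the element $(1_I, h, 1_\Lambda)$ lies in $T$ and is fixed by $\phi$, hence by $\phi'$; evaluating via the form of $\phi'$ gives $(1_I, h, 1_\Lambda) = (1_I, h\tilde{\theta}, 1_\Lambda)$, forcing $\tilde{\theta}$ to fix $H$ pointwise. For any $(\lambda, i)$ with $p_{\lambda, i} \in H$, the ``moreover'' clause of Corollary \ref{cor: inner} gives $(1_I, p_{\lambda, i}, 1_\Lambda)\phi' = (1_I, p_{\lambda\varphi, i\varphi}, 1_\Lambda)$, while directly from the form of $\phi'$ the same element equals $(1_I, p_{\lambda, i}\tilde{\theta}, 1_\Lambda) = (1_I, p_{\lambda, i}, 1_\Lambda)$. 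Equating these yields $p_{\lambda\varphi, i\varphi} = p_{\lambda, i}$, as required. The dual statement follows by running the same argument with $T' = \mathcal{M}[\langle H \rangle; \{1_I\}, N; Q']$.

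The hard part, I expect, is finding the right subsemigroup $T$: it must be small enough that the arbitrary bijection $\psi$ induces a genuine automorphism---hence $T$ is essentially a rectangular group, its sandwich matrix being trivial---yet rich enough that the elements $(1_I, h, 1_\Lambda)$ pin down the group automorphism $\tilde{\theta}$ on $H$. Once $\tilde{\theta}$ is forced to fix $H$ pointwise, colour preservation for all $H$-coloured entries of $\Gamma_P$ follows automatically from Corollary \ref{cor: inner} via the identity $p_{\lambda\varphi, i\varphi} = p_{\lambda, i}\tilde{\theta}$.
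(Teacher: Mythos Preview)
Your proof is correct and follows essentially the same approach as the paper's: both construct the rectangular-group Rees subsemigroup $T=\mathcal{M}[\langle H\rangle; J,\{1_\Lambda\};Q]$, use Corollary~\ref{iso orth} to see that $[\text{Id}_{\langle H\rangle},\hat\psi,\epsilon,\epsilon]$ is an automorphism of $T$, extend by homogeneity, invoke Corollary~\ref{cor: inner} to put the extension in the form $[\tilde\theta,\varphi,\epsilon,\epsilon]$, and then observe that the elements $(1_I,p_{\lambda,i},1_\Lambda)$ with $p_{\lambda,i}\in H$ are fixed, forcing $p_{\lambda\varphi,i\varphi}=p_{\lambda,i}$. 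Your write-up is slightly more explicit in justifying why the extended graph automorphism fixes $1_I,1_\Lambda$ and why $\varphi$ extends $\psi$, but the argument is the same.
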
 

\begin{proof}  Consider the f.g. Rees subsemigroup of $S$ given by $T=\mathcal{M}[\langle H \rangle;J,\{1_{\Lambda}\};Q]$. Let $\psi'$ be the automorphism of $\Gamma_{Q}$ which  fixes $1_{\Lambda}$ and such that $j\psi'=j\psi$ for each $j\in J$. 
Then $\phi=[Id_{\langle H \rangle}, \psi',\epsilon, \epsilon]$ is an automorphism of $T$ by Corollary \ref{iso orth}. 	Extend $\phi$ to an automorphism $\phi'$ of $S$, noting that as $\psi'$ fixes $1_I$ and $1_\Lambda$ we may assume $\phi'=[\theta,\varphi,\epsilon,\epsilon]$ by Corollary \ref{cor: inner}. Let $p_{\lambda,i}\in H$. Then $(1_{I},p_{\lambda,i},1_\Lambda)$ is fixed by $\phi$ and so  $p_{\lambda,i}=p_{\lambda\varphi,i\varphi}$ by Corollary \ref{cor: inner} as required. 
\end{proof}

We let $C(i)=\{p_{\lambda,i}:\lambda\in \Lambda\}$ denote the entries of $P$ in column $i$, and $R(\lambda)=\{p_{\lambda,i}:i\in I\}$ denote the entries of $P$ in row $\lambda$. Unless stated otherwise, we let $I'=I\setminus \{1_I\}$, $\Lambda'=\Lambda\setminus \{1_\Lambda\}$, and $P'$ be the $\Lambda' \times I'$ submatrix of $P$. 

\begin{corollary} \label{column same} Let $S=\mathcal{M}[G;I,\Lambda;P]$ be a homogeneous normalised Rees matrix semigroup. Then $G^P=C(i)=R(\lambda)$ for any $i \in I'$ and any $\lambda\in \Lambda'$. 
\end{corollary}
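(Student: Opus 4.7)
The plan is to derive both equalities directly from Lemma \ref{finite left} together with its row-dual. That lemma provides automorphisms of $\Gamma_P$ permuting columns (resp.\ rows) in a prescribed way while preserving any prescribed finite set of entries of $P$; this is exactly the tool needed to move one occurrence of an arbitrary value $g\in G^P$ into any chosen column (resp.\ row).

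Concretely, to show $G^P\subseteq C(i)$ for a fixed $i\in I'$, I would argue as follows. Take $g\in G^P$. If $g=\epsilon$ then $g=p_{1_\Lambda,i}$ by normalisation, so $g\in C(i)$. Otherwise write $g=p_{\mu,j}$; since every entry on row $1_\Lambda$ or column $1_I$ equals $\epsilon$, necessarily $\mu\in\Lambda'$ and $j\in I'$. If $j=i$ then $g=p_{\mu,i}\in C(i)$ already, so assume $j\neq i$. Now apply Lemma \ref{finite left} with $H=\{g\}$, $J=\{1_I,i,j\}$ and $\psi$ the transposition of $i$ and $j$ (which fixes $1_I$): this yields an automorphism $\varphi$ of $\Gamma_P$ extending $\psi$, fixing $1_\Lambda$, and satisfying $p_{\lambda',i'}=p_{\lambda'\varphi,i'\varphi}$ at every position $(\lambda',i')$ whose entry lies in $H$. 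Applied at $(\mu,j)$, this gives $p_{\mu\varphi,i}=p_{\mu\varphi,j\varphi}=p_{\mu,j}=g$, and so $g\in C(i)$.

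For the equality $G^P=R(\lambda)$ on a fixed $\lambda\in\Lambda'$, the same argument runs verbatim using the row-dual of Lemma \ref{finite left}: writing an arbitrary non-identity $g\in G^P$ as $p_{\mu,j}$ with $\mu\in\Lambda'$, $j\in I'$, permute $\{1_\Lambda,\lambda,\mu\}$ via the transposition of $\lambda$ and $\mu$, extend to an automorphism of $\Gamma_P$ that fixes $1_I$ and preserves the entry $g$, and conclude $p_{\lambda,j\varphi}=g$. There is no substantive obstacle here: the corollary is essentially a direct unpacking of Lemma \ref{finite left}, with the only care required being the degenerate cases ($g=\epsilon$, $j=i$, or $\mu=\lambda$), all of which are disposed of in one line by the normalisation of $P$.
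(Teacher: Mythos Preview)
Your proof is correct and follows essentially the same approach as the paper: both arguments invoke Lemma~\ref{finite left} (and its row-dual) with $H=\{g\}$ and the transposition swapping two columns (resp.\ rows) to transport a given entry of $P$ into a prescribed column (resp.\ row). The only cosmetic difference is that the paper first shows $C(i)=C(j)$ for all $i,j\in I'$ and then deduces $G^P=C(i)$, whereas you argue the inclusion $G^P\subseteq C(i)$ directly; the underlying mechanism is identical.
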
 
\begin{proof}
 We prove that $C(i)=C(j)$ for any $i,j\in I'$ and $R(\lambda)=R(\mu)$ for any $\lambda,\mu \in \Lambda'$, from which the result is immediate. Let $i,j\in I'$ and take any $p_{\lambda,i}\in C(i)$.  Let $H=\{p_{\lambda,i}\}$, $J=\{1_I,i,j\}$ and $\psi$ be a bijection of $J$ fixing $1_I$ and swapping $i$ and $j$. Then by Lemma \ref{finite left} there exists an automorphism $\varphi$ of $\Gamma_P$ such that $p_{\lambda,i} = p_{\lambda\varphi,j}\in C(j)$. Hence $C(i)\subseteq C(j)$, and a similar argument gives equality. Dually for rows. 
\end{proof}

\begin{proposition}   Let $S=\mathcal{M}[G;I,\Lambda;P]$ be a homogeneous normalised Rees matrix semigroup. Then $I$ is finite if and only if $\Lambda$ is finite. 
\end{proposition}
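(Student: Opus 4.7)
The plan is to argue by contradiction for the forward direction: assume $\Lambda$ is finite yet $I$ is infinite, and derive a contradiction via Theorem \ref{thm:edge class}. The converse direction then follows by the same argument with the roles of rows and columns interchanged. Throughout I lean on the running non-orthodox hypothesis of this section, so that $\Gamma(S)$ is multi-coloured, i.e.\ $|G^{P'}|\geq 2$; in particular $|I|,|\Lambda|\geq 2$, and so $I'$ and $\Lambda'$ are both non-empty.

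The first step is to use Corollary \ref{column same} to transfer finiteness of $\Lambda$ to finiteness of $G^P$: fixing any $i\in I'$, we have $G^P=C(i)\subseteq\{p_{\lambda,i}:\lambda\in\Lambda\}$, a set of cardinality at most $|\Lambda|$. Once $G^P$ is finite, Proposition \ref{prop: graph hom} kicks in and tells us that $\Gamma(S)$ is a finitely coloured homogeneous bipartite graph with left set $\Lambda'$ and right set $I'$.

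The heart of the proof is then the application of Theorem \ref{thm:edge class}. With $|G^{P'}|\geq 2$, case (1) of the trichotomy is excluded outright. Case (3) would force both sides of $\Gamma(S)$ to have cardinality $\aleph_0$, which is incompatible with $|\Lambda'|<\aleph_0$. So $\Gamma(S)$ must fall into case (2): one colour constitutes a perfect matching between $\Lambda'$ and $I'$. Since a perfect matching requires the two sides to be equinumerous, $|I'|=|\Lambda'|<\aleph_0$, whence $I$ is finite, contradicting our assumption.

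I do not foresee serious obstacles: the argument is a clean pipeline Corollary \ref{column same} $\to$ Proposition \ref{prop: graph hom} $\to$ Theorem \ref{thm:edge class}, with cases (1) and (3) eliminated by colour count and by cardinality, respectively. The main subtlety is making sure the non-orthodox hypothesis is genuinely in force so that $|G^{P'}|\geq 2$; without it one slides into the rectangular-group case already handled by Theorem \ref{orth}, where $|I|$ and $|\Lambda|$ need not be simultaneously finite.
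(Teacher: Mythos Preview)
Your proposal is correct and follows the same pipeline as the paper: finiteness of one index set forces $G^P$ finite via Corollary~\ref{column same}, hence $\Gamma(S)$ is homogeneous by Proposition~\ref{prop: graph hom}, and Theorem~\ref{thm:edge class} then constrains the other index set. The paper argues directly (starting from $I$ finite) rather than by contradiction, but the logic is identical; your explicit case analysis of the trichotomy and your flagging of the non-orthodox hypothesis simply unpack what the paper compresses into a single sentence.
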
 

\begin{proof} Suppose $I$ is finite. Then by Corollary \ref{column same}  we have that $G^P$ is finite, and so $\Gamma(S)$ is homogeneous by Proposition \ref{prop: graph hom}. It then follows Theorem \ref{thm:edge class} that $\Lambda$ is finite. Dually for $\Lambda$.  
\end{proof}

Consequently, the sandwich matrix of a homogeneous Rees matrix semigroup is either finite, or is  infinite by infinite.
  We are now able to complete our proof of the forward direction of Theorem \ref{thm:S iff E,G}. 

\begin{proposition}\label{GP char}  Let $S=\mathcal{M}[G;I,\Lambda;P]$ be a homogeneous normalised Rees matrix semigroup. Then $G^P$ is a characteristic subgroup of $G$. 
\end{proposition}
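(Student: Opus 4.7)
The plan is to use the homogeneity of $S$ to show that every automorphism of $G$ preserves the set $G^P$, and to deduce from this, together with the homogeneity of $G$ already established in Proposition \ref{prop:G, E hom}, that $G^P$ is itself a subgroup.

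\textbf{Invariance under $\aut(G)$.} Fix $\theta \in \aut(G)$ and $p = p_{\lambda,i} \in G^P$. Let $H = \langle p \rangle$, a f.g.\ cyclic subgroup of $G$. Since $S$ is normalised, $p_{1_\Lambda, 1_I} = \epsilon$ and so multiplication on the slices
\[ A = \{(1_I, h, 1_\Lambda) : h \in H\}, \quad B = \{(1_I, h\theta, 1_\Lambda) : h \in H\}\]
coincides with that of $H$ and $H\theta$ respectively; each is a f.g.\ subgroup of the maximal subgroup of $S$ at $(1_I, \epsilon, 1_\Lambda)$ and is closed under the unary operation, hence a f.g.\ completely simple subsemigroup of $S$. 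The map $\phi: A \to B$, $(1_I, h, 1_\Lambda) \mapsto (1_I, h\theta, 1_\Lambda)$ is an isomorphism fixing the idempotent $(1_I, \epsilon, 1_\Lambda)$. By homogeneity of $S$, extend $\phi$ to $\phi' \in \aut(S)$. Since $\phi'$ fixes $(1_I, \epsilon, 1_\Lambda)$, it fixes the normalised row and column, so Corollary \ref{cor: inner} yields $\phi' = [\theta', \psi, \epsilon, \epsilon]$ for some $\theta' \in \aut(G)$ and some $\psi$ fixing $1_I, 1_\Lambda$, with $\theta'$ extending $\theta|_H$ (since $\phi'$ extends $\phi$). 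Computing $\phi'(1_I, p, 1_\Lambda)$ via the decomposition gives $(1_I, p\theta', 1_\Lambda) = (1_I, p\theta, 1_\Lambda)$, while Corollary \ref{cor: inner} also gives it as $(1_I, p_{\lambda\psi, i\psi}, 1_\Lambda)$. Hence $p\theta = p_{\lambda\psi, i\psi} \in G^P$, so $\theta(G^P) \subseteq G^P$; applying the argument to $\theta^{-1}$ gives $\theta(G^P) = G^P$.

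\textbf{Subgroup property.} The identity $\epsilon$ lies in $G^P$ by normalisation. Closure under inverses is immediate from the previous step: the inversion map on the cyclic group $\langle p\rangle$ is an automorphism, which extends to $\theta \in \aut(G)$ by the homogeneity of $G$, and invariance gives $p^{-1} = p\theta \in G^P$. The main obstacle will be closure under products: given $p, q \in G^P$, one would like to produce $\theta \in \aut(G)$ realising $p\theta = pq$ (or similar) and then invoke invariance, but such a $\theta$ need not exist directly, since $p$ and $pq$ may have different orders and the assignment $p \mapsto pq$, $q \mapsto q$ need not extend to a homomorphism of $\langle p, q\rangle$. I expect this step to require a more intricate combination of the homogeneity of $S$ with Lemma \ref{finite left}, using column/row permutations that preserve a finite set of prescribed matrix entries to realise $pq$ itself as an entry of $P$, or else a case analysis invoking additional structure of the homogeneous group $G$.
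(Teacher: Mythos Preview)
Your invariance argument is correct and essentially identical to the second half of the paper's proof. Your inverse argument via homogeneity of $G$ is also fine, though in the paper it comes for free once the stronger closure property below is established.

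The gap you identify is real, and your instinct that it cannot be filled via $\aut(G)$ is correct: there is no reason an automorphism of $G$ should send $p$ to $pq$. The paper does not go through $\aut(G)$ at all for the subgroup part; instead it exploits the homogeneity of $S$ together with Corollary~\ref{column same}. Given $a,b\in G^P$, that corollary lets one assume $a=p_{\lambda,i}$ and $b=p_{\mu,i}$ lie in the \emph{same column} $i$. With $H=\langle a,b\rangle$ one sets
\[
S_1=\mathcal{M}[H;\{1_I,i\},\{\lambda\};(\epsilon\ a)],\qquad
S_2=\mathcal{M}[H;\{1_I,i\},\{\mu\};(\epsilon\ b)],
\]
and defines $\phi=[\mathrm{Id}_H,\psi,\bar u_j,\bar v_\lambda]$ with $\psi$ fixing $1_I,i$ and sending $\lambda\mapsto\mu$, where $\bar u_{1_I}=\bar v_\lambda=\epsilon$ and $\bar u_i=b^{-1}a$. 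The twist $\bar u_i=b^{-1}a$ is exactly what makes the single nontrivial compatibility equation $a=\epsilon\cdot b\cdot(b^{-1}a)$ hold, so $\phi$ is an isomorphism of f.g.\ subsemigroups. Extend to $\phi'=[\theta,\psi',u_j,v_\nu]\in\aut(S)$. Since $\psi'$ fixes $1_I$, the usual argument forces $v_\tau=u_{1_I}^{-1}$ for all $\tau\in\Lambda$; comparing the images of $(i,\epsilon,\lambda)$ under $\phi$ and $\phi'$ gives $u_iu_{1_I}^{-1}=b^{-1}a$. The key step is now to look at the image of the \emph{normalised row}, which was not in the domain of $\phi$: with $\sigma=1_\Lambda\psi'$ one has
\[
\epsilon=p_{1_\Lambda,i}\theta=u_{1_I}^{-1}p_{\sigma,i}u_i,\qquad\text{whence}\qquad p_{\sigma,i}=u_{1_I}u_i^{-1}=a^{-1}b\in G^P.
\]
Thus $a^{-1}b\in G^P$ for all $a,b\in G^P$, and since $\epsilon\in G^P$ this yields the subgroup property in one stroke. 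The moral is that the product is not realised through a group automorphism but by tracking where an extended semigroup automorphism sends the normalised row, with the information $a^{-1}b$ encoded in the correction terms $u_i,u_{1_I}$.
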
 

\begin{proof}
% We first show that if $G^P$ contains an element of order $n$, then it contains every element of $G$ of order $n$. Let $p_{\lambda,i}$ have order $n\in \mathbb{N}^*=\mathbb{N}\cup \{\aleph_0\}$, and $x\in G$ be any element of order $n$. Then  the map 
%\[ \phi:\langle (1_I,p_{\lambda,i},1_\Lambda) \rangle \rightarrow \langle(1_I,x,1_\Lambda) \rangle, \quad (1_I,p_{\lambda,i}^m,1_\Lambda)\phi=(1_I,x^m,1_{\Lambda}) \quad (m\in \mathbb{Z})
%\] 
% is an isomorphism. We may extend $\phi$ to an automorphism $[\theta,\psi,\epsilon,\epsilon]$ by the homogeneity of $S$ and Corollary \ref{cor: inner}.
% Notice that  $(1_I,p_{\lambda,i},1_{\Lambda})\phi =(1_I,x,1_\Lambda)$, so that $p_{\lambda\psi,i\psi}=x$ by Corollary \ref{cor: inner} as required. Consequently $G^P$ is a characteristic subset of $G$. 

Let $a,b\in G^P$, so that by Lemma \ref{column same} we may assume that $a=p_{\lambda,i}$ and $b=p_{\mu,i}$ for some $i\in I$ and  $\lambda,\mu\in \Lambda$. Letting $H=\langle a,b \rangle$, consider a pair of f.g. Rees  subsemigroups of $S$ given by $S_1=\mathcal{M}[H;\{1,i\},\{\lambda\};P_1]$ and $S_2=\mathcal{M}[H;\{1,i\},\{\mu\};P_2]$.
 Let $\psi:\Gamma_{P_1}\rightarrow \Gamma_{P_2}$ be the isomorphism fixing $1_I$ and $i$, and with $\lambda\psi=\mu$. Let $\bar{u}_{1_I}=\bar{v}_\lambda = \epsilon$ and $\bar{u}_i=b^{-1}a$. Then it is a simple exercise to show that $\phi=[Id_{H},\psi,\bar{u}_i,\bar{v}_{\lambda}]$ is an isomorphism from $S_1$ to $S_2$, which we may thus extend to $\phi'=[\theta,\psi',{u}_i,{v}_\lambda]\in \text{Aut}(S)$. For each $\tau\in \Lambda$ we have 
 \[ \epsilon=p_{\tau,1_I}\theta = v_{\tau} p_{\tau\psi',1_I} u_{1_I} = v_{\tau}u_{1_I},
 \] 
  and so $v_{\tau}=u_{1_I}^{-1}$. By considering the image of $(i,\epsilon,\lambda)$ by $\phi$ and $\phi'$ we have that $u_i u_{1_I}^{-1}= b^{-1}a$. Let $1_{\Lambda}\psi=\sigma$, so that 
\[ \epsilon=p_{1_\Lambda,i}\theta= u_{1_I}^{-1} p_{\sigma,i} u_i.
\] 
Then $p_{\sigma,i} = u_{1_I}u_i^{-1} = a^{-1}b\in G^P$, and hence $G^P$ is a subgroup of $G$. 

Now let $\theta$ be an automorphism of $G$, and let $p_{\lambda,i}\in G^P$. Then the map 
\[ \phi:\langle (1_I,p_{\lambda,i},1_\Lambda) \rangle \rightarrow \langle(1_I,p_{\lambda,i}\theta,1_\Lambda) \rangle, \quad (1_I,p_{\lambda,i}^m,1_\Lambda)\phi=(1_I,(p_{\lambda,i}\theta)^m,1_{\Lambda}) \quad (m\in \mathbb{Z})
\] 
is an isomorphism. We may extend $\phi$ to an automorphism $[\chi,\psi,\epsilon,\epsilon]$ by the homogeneity of $S$ and Corollary \ref{cor: inner}.  
Notice that  $(1_I,p_{\lambda,i},1_{\Lambda})\phi =(1_I,p_{\lambda,i}\theta,1_\Lambda)$, so that $p_{\lambda\psi,i\psi}=p_{\lambda,i}\theta\in G^P$ by Corollary \ref{cor: inner}. Hence $G^P$ is a characteristic subgroup of $G$. 
\end{proof} 

%\begin{theorem} Let $S =\mathcal{M}[G;I;\Lambda;P]$ be a Rees matrix semigroup. Then $S$ is homogeneous if and only $G$ and $\langle E(S) \rangle$ are homogeneous, and $G(P)$ forms a characteristic subgroup of $G$.
%\end{theorem} 
%
%\begin{proof} Suppose $G$ and $\langle E(S) \rangle = [G(P);I;\Lambda;P]$ are homogeneous, with $G(P)$ characteristic in $G$. To obtain our result we shall use Fraisse's Theorem by showing that Age($S$) has amalgamation. 
%
%Let $[M_0;M_1,M_2]$ be an amalgam in Age($S$), where $M_k=\mathcal{M}[H_k;I_k;\Lambda_k;P_k]$ $(k=0,1,2)$. We may assume without loss of generality that each $M_k$ is normalised, and let $H_k'=\langle H_k(P_k) \rangle$.
% Let $\phi_k=[\theta_k,\psi_k,u_i^{k},v_{\lambda}^k]$ be a pair of embeddings ($k=1,2$), where we may assume that $\phi_k'=[\theta_k|_{H_k'}, \psi_k,u_i^{(k)},v_\lambda^{(k)}]$ is an embedding of $E_0=\langle E(M_0) \rangle$ to $E_k=\langle E(M_k) \rangle$ by ???. 
% Hence $[E_0;E_1,E_2]$ is an amalgam in Age$(\langle E(S) \rangle)$, and so there exists (normalised) $E'=[H';I';\Lambda',P']\in$ Age$(\langle E(S) \rangle)$ and embeddings $\varphi_k=[\delta_k,\omega_k,\bar{u}_i^k,\bar{v}_{\lambda}^{(k)}]:E_k\rightarrow E'$ such that  $\phi_1'\varphi_1 = \phi_2'\varphi_2$. 
%\end{proof}
%
Our classification naturally splits into two cases, based on whether $G^P$ is finite or not. However, it will be easier to simultaneously consider  the cases where $\Gamma(S)$ is of generic type  or $G^P$ is infinite. 

\subsection{$\Gamma(S)$ finitely coloured and not of generic type} 

In this subsection  we classify the homogeneity of Rees matrix semigroups $S=\mathcal{M}[G;I,\Lambda;P]$ where $\Gamma(S)$ is finitely coloured, so that $G^P$ is finite, but not of generic type. By Theorem \ref{thm:edge class} $P'=(\Lambda\setminus \{1_{\lambda}\}) \times (I\setminus \{1_I\})$ has either all entries the same, or $G^{P'}=\{a,b\}$ with $a$ appearing exactly once in each row and column.

\begin{lemma} Let $S=\mathcal{M}[G;I,\Lambda;P]$ be a normalised Rees matrix semigroup such that $\Gamma(S)$ is finitely coloured and not of generic type. Then $S$ is homogeneous if and only if $G$ is homogeneous with characteristic subgroup $G^P$ such that either  
\begin{enumerate}
\item $G^P=\{\epsilon\}$, so that $S$ is orthodox. 
\item $|I|=|\Lambda|=2$ with $G^P=\{\epsilon,a\}\cong \mathbb{Z}_2$ and $P'=(a)$. 
\item $|I|=|\Lambda|=3$ with $G^P=\{\epsilon,a,a^{-1}\}\cong \mathbb{Z}_3$ and $P'$ is of the form
\[ \left( \begin{array}{cc}
a & a^{-1} \\
a^{-1} & a 
\end{array} \right).\]  
\item $|I|=|\Lambda|=4$ with $G^P=\{\epsilon,a\}\cong \mathbb{Z}_2$ and $P'$ is of the form   
\[ \left( \begin{array}{ccc}
\epsilon & a & a \\
a & \epsilon & a \\
a & a & \epsilon
\end{array} \right).\] 
\end{enumerate}
\end{lemma}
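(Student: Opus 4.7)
The plan is to handle the two directions separately, concentrating most of the effort on the forward direction. Assume first that $S$ is homogeneous. Propositions~\ref{prop:G, E hom} and~\ref{GP char} give that $G$ is homogeneous and $G^P$ is a characteristic subgroup of $G$. Since $G^P$ is finite, Proposition~\ref{prop: graph hom} shows that $\Gamma(S)$ is a homogeneous finitely edge-coloured bipartite graph, and, as $\Gamma(S)$ is excluded from being of generic type, Theorem~\ref{thm:edge class} leaves two scenarios: (A) $\Gamma(S)$ is monochromatic with colour $c$, or (B) $\Gamma(S)$ has two colours $a,b$ with $a$ a perfect matching and $b$ its complement. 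In scenario (A), if $I' = \emptyset$ or $\Lambda' = \emptyset$ the matrix $P$ has only identity entries and we are in case (1); otherwise Corollary~\ref{column same} forces $G^P = C(i) = \{\epsilon, c\}$ for any $i \in I'$, whence $c^2 = \epsilon$ and $G^P \cong \mathbb{Z}_2$. In scenario (B) the matching forces $|I'| = |\Lambda'|$, and Corollary~\ref{column same} gives $G^P = R(\lambda) \supseteq \{\epsilon, a, b\}$; subgroup closure then compels either $b = \epsilon$ (so $G^P \cong \mathbb{Z}_2$, heading towards case (4)) or $b \ne \epsilon$, in which case $G^P = \{\epsilon, a, a^{-1}\} \cong \mathbb{Z}_3$, heading towards case (3).

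To pin down the precise sizes $|I| = |\Lambda|$, I will use the homogeneity of $\langle E(S)\rangle$ (inherited from $S$) and the explicit description of its automorphisms provided by Theorem~\ref{iso thm} and Corollary~\ref{cor: inner}: every automorphism has the form $[\theta, \psi, u_i, v_\lambda]$ for some colour-preserving $\psi \in \mathrm{Aut}(\Gamma_P)$. In scenario (A) with $|I| \geq 3$ or $|\Lambda| \geq 3$, the row $1_\Lambda$ and column $1_I$ are singled out by having strictly more $\epsilon$ entries than any other row or column, so every such $\psi$ must fix them; one then verifies that the singleton subsemigroups $\{(i, \epsilon, 1_\Lambda)\}$ (for $i \in I'$) and $\{(j, c, \lambda)\}$ (for $j \in I'$, $\lambda \in \Lambda'$) are isomorphic as completely regular subsemigroups yet lie in different $\mathrm{Aut}(\langle E(S)\rangle)$-orbits, contradicting homogeneity. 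Analogous multiplicity-based arguments rule out every size in scenario (B) except those quoted; for the listed sizes the sandwich matrix is sufficiently symmetric --- up to permutations of $I'$ and $\Lambda'$, combined in case (3) with the inversion automorphism of $\mathbb{Z}_3$ --- that the induced automorphism group restores transitivity on finitely generated subsemigroups.

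For the backward direction, $G^P$ is simple abelian in every listed case, so Theorem~\ref{thm: E,G} reduces homogeneity of $S$ to homogeneity of $\langle E(S)\rangle$. In case (1), $\langle E(S)\rangle = \mathcal{M}[\{\epsilon\}; I, \Lambda; P]$ is a rectangular band and Theorem~\ref{orth} delivers homogeneity. In cases (2)--(4), $\langle E(S)\rangle$ is a finite Rees matrix semigroup (of orders $8$, $27$, and $32$ respectively), so homogeneity reduces to a finite check: the age is finite and one verifies directly that every isomorphism between finitely generated subsemigroups extends to an automorphism of $\langle E(S)\rangle$, facilitated by the rich automorphism group furnished by Theorem~\ref{iso thm} --- built from symmetric-group actions on $I'$ and $\Lambda'$ together with the relevant group automorphism of $G^P$ (the inversion of $\mathbb{Z}_3$ in case (3), trivial elsewhere). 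The principal obstacle throughout is the orbit-counting in the forward direction: infinitely many candidate sizes must be eliminated, and carrying this out uniformly, by tracking how multiplicities of the colours of $P$ constrain the admissible graph automorphisms $\psi$, is the technical heart of the argument.
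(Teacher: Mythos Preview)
Your forward direction contains a genuine gap. You assert that in an automorphism $[\theta,\psi,u_i,v_\lambda]$ of $\langle E(S)\rangle$ the bijection $\psi$ is ``colour-preserving'' and must therefore fix $1_I$ and $1_\Lambda$ because these have strictly more $\epsilon$-entries. But $\psi$ in Theorem~\ref{iso thm} is only a bipartite graph morphism of the \emph{monochromatic} graph $\Gamma_P$; the entries of $P$ are governed by $p_{\lambda,i}\theta=v_\lambda p_{\lambda\psi,i\psi}u_i$, and the twisting by $u_i,v_\lambda$ means $\psi$ need not preserve multiplicities of entries at all. For instance, in scenario~(A) with $|I|=2$, $|\Lambda|=3$ one checks that $[\mathrm{Id},\psi,\epsilon,\epsilon,\epsilon,a,a]$ with $\psi$ swapping $1_I$ and $i$ and fixing $1_\Lambda$ is a valid automorphism, so $1_I$ is not fixed. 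Your orbit claim may sometimes survive this error, but the stated justification does not establish it, and for scenario~(B) you give no argument beyond ``analogous multiplicity-based arguments'', which inherits the same defect.

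The paper proceeds quite differently: rather than analysing $\mathrm{Aut}(\langle E(S)\rangle)$ globally, it constructs in each scenario an explicit isomorphism $\phi$ between two carefully chosen finite Rees subsemigroups (over a single-row sandwich matrix), extends $\phi$ to $\phi'=[\theta,\psi',u_i',v_\lambda']\in\mathrm{Aut}(S)$ by homogeneity, and then reads off the system of equations $p_{\lambda,i}\theta=v_\lambda' p_{\lambda\psi',i\psi'}u_i'$ to force a contradiction when $|I|$ exceeds the listed bound. This bypasses entirely the question of which $\psi$ can occur. For the converse the paper, like you, invokes Theorem~\ref{thm: E,G}, but verifies homogeneity of the three finite semigroups $\langle E(S)\rangle$ by a GAP computation rather than by hand.
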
   

\begin{proof} Suppose $S$ is homogeneous. Then $G$ is homogeneous with characteristic subgroup $G^P$ by Propositions \ref{prop:G, E hom} and \ref{GP char}. Since $\Gamma(S)$ is finitely coloured, it is thus homogeneous by Proposition \ref{prop: graph hom}. Hence by Theorem \ref{thm:edge class} $P'$ has either all entries the same, or $G^{P'}=\{a,b\}$ with $a$ appearing exactly once in each row and column, so that $P'$ has the same number of rows and columns. Since the homogeneity of $S$ passes to $\langle E(S) \rangle = [G^P;I,\Lambda;P]$, we may assume that $S=\langle E(S) \rangle$ to show that $P'$ reduces to one of the four forms. 

Suppose first that $G^{P'}=\{a\}$.  If $a=\epsilon$ then $S$ is orthodox and we obtain case (1), so assume instead that $a\neq \epsilon$, so that $G^P=\{\epsilon,a\}\cong \mathbb{Z}_2$.  
 Suppose, seeking a contradiction, that $|I|>2$, and fix distinct $1_I,i,j\in I$.  Then $P$ contains the $\{1_\Lambda,\lambda\} \times \{1_I,i,j\}$ submatrix
\[ \left( \begin{array}{ccc}
\epsilon & \epsilon & \epsilon \\
\epsilon & a & a 
\end{array} \right).\] 
Consider a pair of f.g. Rees subsemigroups of $S$ given by 
\[ S_1=\mathcal{M}[G^P; \{1_I,i,j\},\{1_\Lambda\};P_1], \quad S_2=\mathcal{M}[G^P; \{1_I,i,j\},\{\lambda\};P_2]. 
\] 
so that $P_1=(\epsilon \, \epsilon \, \epsilon)$ and $P_2=(\epsilon \, a \, a)$. 
Let $\psi:\Gamma_{P_1}\rightarrow \Gamma_{P_2}$ be the isomorphism given by $1_I\psi=i$, $i\psi=j$, $j\psi=1_I$ and $1_\Lambda \psi=\lambda$. 
Let $\phi=[\text{Id}_{G^P},\psi,u_i,v_{\lambda}]$ be an isomorphism from $S_1$ to $S_2$, so that
\begin{equation} \label{eq:case 1}  \epsilon=v_{1_\Lambda} au_{1_I} = v_{1_\Lambda} a u_i = v_{1_\Lambda}  u_j,
\end{equation} 
which may be satisfied by $v_{1_{\Lambda}}=\epsilon=u_j$ and $u_{1_I}=a=u_i$, say. Extend $\phi$ to an automorphism $\phi'=[\text{Id}_{G^P},\psi',u_i',v_{\lambda}']$ of $S$. Then 
\[ \epsilon=v_{\lambda}' p_{\lambda\psi',i}u_{1_I}', \quad a = v_{\lambda}' p_{\lambda\psi',j}u_i', \quad a=v_{\lambda}' p_{\lambda\psi',1_I} u_j' =v_{\lambda}'u_j',
\] 
and $u_j'=au_{1_I}'=au_i'$ by \eqref{eq:case 1}. Hence, as $G^P$ is abelian with $a^2=\epsilon$, we have  
\[ \epsilon= v_{\lambda}'p_{\lambda\psi',i} (au_j') = (v_{\lambda}'u_j')p_{\lambda\psi',i} a = ap_{\lambda\psi',i} a = p_{\lambda\psi',i}, 
\] 
so that $\lambda\psi'=1_{\Lambda}$. Similarly, 
\[ a= v_{\lambda}' p_{\lambda\psi',j}(a u_j')= p_{\lambda\psi',j} =p_{1_\Lambda ,j}=\epsilon, 
\] 
and we arrive at our desired contradiction. Thus $|I|=2=|\Lambda|$, and case (2) is obtained. 

Now suppose $G^{P'}=\{a,b\}$, and suppose the edges of $\Gamma(S)$ coloured by either $a$ or $b$ forms a perfect matching, so that $|I|=|\Lambda|\geq 3$ and $G^P$ is isomorphic to either $\mathbb{Z}_2$ or $\mathbb{Z}_3$, and thus abelian. 

Suppose first that $b=a^{-1}$, so that $G\cong \mathbb{Z}_3$, and suppose without loss of generality that $a$ appears exactly once in each row and column of $P$. Consider the subsemigroup $T=\{(1_I,a^n,1_\Lambda ):n=-1,0,1\}$ of $S$. Then we may extend the unique non-identity automorphism of $T$ to an automorphism $\phi=[\theta,\psi,\epsilon,\epsilon]$ of $S$, noting the use of Corollary \ref{cor: inner}. Then 
\[ (1_I,a,1_\Lambda)\phi = (1_I,a\theta,1_{\Lambda}) = (1_I,a^{-1},1_{\Lambda}),
\] 
so that $a\theta=a^{-1}$. Suppose, seeking a contradiction, that $|I|>3$, so that there exist $i,j,k\in I\setminus \{1_I\}$. Since row $\lambda$ contains  $a$ exactly once, we may assume without loss of generality that  $p_{\lambda,i}=p_{\lambda,j}=a^{-1}$, so that 
\[  p_{\lambda\psi,i\psi} = p_{\lambda,i}\theta =   a =  p_{\lambda,j}\theta 
\] 
and similarly $p_{\lambda\psi,j\psi}=a$, contradicting $ \lambda\psi$ containing $a$ only once. Hence $|I|=|\Lambda|=3$, and case (3) is achieved. 

Suppose instead that $b\neq a^{-1}$, so that $b=\epsilon$ as $G^P=\{\epsilon,a,b\}$ forms a group. Then $P$ contains a $\{1_\Lambda,\lambda,\mu\} \times \{1_I,i,j\}$ submatrix given by 
\[ \left( \begin{array}{ccc}
\epsilon & \epsilon & \epsilon \\
\epsilon & a & \epsilon \\
\epsilon & \epsilon & a
\end{array} \right).\] 
We  study the Rees subsemigroups of the form 
\[ S_1=\mathcal{M}[G^P; \{1_I,i,j\},\{1_\Lambda\};(\epsilon \, \epsilon \, \epsilon)], \quad S_2=\mathcal{M}[G^P; \{1_I,i,j\},\{\lambda\};(\epsilon \, a \, \epsilon)],
\] 
and let $\phi=[\text{Id}_{G^P},\psi,u_i,v_\lambda ]$ be the isomorphism from $S_1$ to $S_2$, where $\psi$ fixes $1_I$, and swaps $i$ and $j$, and $1_\Lambda \psi=\lambda$. For $\phi$ to be a morphism we require 
\begin{equation} \label{eq:8} \epsilon=v_{1_\Lambda} u_{1_I} = v_{1_\Lambda} u_i = v_{1_\Lambda} a u_j, 
\end{equation} 
which is satisfied by $v_{1_\Lambda}=\epsilon=u_{1_{I}}=u_i$ and $u_j=a$, say. Extend $\phi$ to $\phi'=[\text{Id}_{G^P},\psi',u_i',v_{\lambda}']\in \text{Aut}(S)$, noting that as $1_{I}$ is fixed by $\psi$ it follows by the proof of Corollary \ref{cor: inner} that $v_{\sigma}'=v_{1_\Lambda}'$ for each $\sigma\in \Lambda$. By \eqref{eq:8}, we have  
\begin{align*}  &   p_{\sigma,i}=v_\sigma ' p_{\sigma\psi,j}u_i'= v_{1_{\Lambda}}' p_{\sigma\psi,j} u_i' = p_{\sigma\psi,j}, \\
& p_{\sigma,j}=v_{\sigma}' p_{\sigma\psi,i} u_j' = v_{1_{\Lambda}}' p_{\sigma\psi,i} u_j' = ap_{\sigma\psi,i}.
\end{align*}
 In particular, $a=p_{\lambda,i}=p_{\lambda\psi,j}$ and $\epsilon=p_{\lambda,j}=ap_{\lambda\psi,i}$, so that $p_{\lambda\psi,i}=a$. Since $a\neq \epsilon$, it follows that $\lambda\psi\notin \{1_{\Lambda},\lambda,\mu\}$, so  $|I|=|\Lambda|\geq 4$, with each row and column of $P'$ containing $\epsilon$ exactly once. Let $\gamma \in \Lambda\setminus \{1_{\Lambda},\lambda,\mu\}$. Then $p_{\gamma,j}=a$ and $p_{\gamma,j}=ap_{\gamma\psi,i}$, so that $p_{\gamma\psi,i}=\epsilon$. Hence $\gamma\psi\in \{1_{\Lambda},\mu\}$. However, $p_{\gamma,i}=a$ and $p_{\gamma,i}=p_{\gamma\psi,j}$, so that $\gamma\psi=\mu$. Hence $|I|=|\Lambda|=4$, and we arrive at case (4).

 Conversely, case (1) is homogeneous by Theorem \ref{orth}. For each of the cases (2), (3) and (4), the subsemigroup $\langle E(S) \rangle =\mathcal{M}[G^P;I,\Lambda;P]$  can be verified to be homogeneous by using the Semigroups package \cite{GAP} for the computational algebra system GAP \cite{Gap2}; see \cite{Comp} for details of the computation.  Hence $S$ is homogeneous by Theorem \ref{thm: E,G}. 
\end{proof}

Given that a  classification of homogeneous finite groups is known, we thus obtain  a  classification of homogeneous finite completely simple semigroups. However, using the theorem above in practice requires the understanding of which homogeneous groups possess a characteristic subgroup isomorphic to $\mathbb{Z}_2$ or $\mathbb{Z}_3$. By a simple application of homogeneity, this is equivalent to the homogeneous group possessing a unique copy of $\mathbb{Z}_2$ or $\mathbb{Z}_3$. 

For example, the quaternions and the special linear groups $SL_2(5)$ and $SL_2(7)$ possess a unique copy of $\mathbb{Z}_2$, but not a unique copy of $\mathbb{Z}_3$. On the other hand,  the linear groups $L_2(5)$ and $L_2(7)$ do not fall into either category. 

Moreover, it is clear from the work of Cherlin and Felgner \cite{Cherlin91} that a homogeneous abelian $p$-group with a unique copy of $\mathbb{Z}_p$ for some prime $p$ is isomorphic to either $\mathbb{Z}_p$ or the Pr{\"u}fer $p$-group $\mathbb{Z}[p^{\infty}]$. From this, all homogeneous abelian groups with a unique copy of $\mathbb{Z}_p$ may be easily built. 

%This result extends to the solvable case by also allowing the semidirect product of an abelian group with a unique pair of elements of order 3 with $\mathbb{Z}_2$. 

%
%For example, a finite group is homogeneous with a unique involution if and only if it is isomorphic to $U \times V$ where $(|U|,|V|)=1$,  $U$ \text{ is abelian of odd order with all Sylow subgroups homocyclic } and $V$ is one of
% \begin{align*}
% & \mathrm{i)} \, \mathbb{Z}_2, \\ 
% & \mathrm{ii) }\, W \rtimes  \mathbb{Z}_{2^n} (n \geq 2) \text{ where  W is Abelian of odd order with all Sylow subgroups homocylic,  } \\
%\,  & \text{and } \mathbb{Z}_{2^n} \text{ inverses all elements of } W, \\ 
% & \mathrm{iii) } \,  Q_8 \text{ (the quaternion group),}\\
% & \mathrm{iv) } \, Q_8 \rtimes \mathbb{Z}_3, \\
% & \mathrm{v) } \, \mathbb{Z}_3^2 \rtimes Q_8, \\
% & SL_2(5) \text{ and } SL_2(7),   
% \end{align*}
% 
% \begin{enumerate}
% \item $(W\times \mathbb{Z}_3)\rtimes \mathbb{Z}_{2}$, where $W$ is Abelian of odd order with all Sylow subgroups homocyclic and no elements of order 3, 
%\item $\mathcal{Q}_8 \rtimes \mathbb{Z}_3$, 
%\item $\mathcal{Q}(64) \rtimes \mathbb{Z}_3$. 
% \end{enumerate}

 \subsection{The generic case }
 
 In this section we consider the homogeneity of the final two cases: where $\Gamma(S)$ is  of generic type or $G^P$ is infinite. In either case we have that both $I$ and $\Lambda$ are infinite, and by the following result we need only consider the generic case: 
 
 \begin{lemma}\label{lemma:generic} Let $S=\mathcal{M}[G;I,\Lambda;P]$ be a homogeneous non-orthodox Rees matrix semigroup with $G^P$ infinite. Then $\Gamma(S)$ is $G^P$-generic. 
 \end{lemma}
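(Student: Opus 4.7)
The plan is to verify the two defining conditions of $G^P$-genericity for $\Gamma(S)$. The countability of the left and right sets $\Lambda'$ and $I'$ is immediate from Corollary \ref{column same}, since $C(j)=G^P$ for $j\in I'$ forces $|\Lambda|\geq|G^P|=\aleph_0$, and dually. The rest of the proof is devoted to the witness property: given distinct $\lambda_1,\ldots,\lambda_n\in\Lambda'$ and labels $g_1,\ldots,g_n\in G^P$, finding $i\in I'$ such that $p_{\lambda_k,i}=g_k$ for all $k$.

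My strategy is to first exhibit a \emph{source realization} of the desired pattern elsewhere in $P$ -- that is, distinct rows $\mu_1,\ldots,\mu_n\in\Lambda'\setminus\{\lambda_1,\ldots,\lambda_n\}$ and a column $j\in I'$ with $p_{\mu_k,j}=g_k$ -- and then to transport this realization onto the target rows by homogeneity. The transport is a direct application of the dual of Lemma \ref{finite left}: setting $H=\{g_1,\ldots,g_n\}$ and $M=\{1_\Lambda,\mu_1,\ldots,\mu_n,\lambda_1,\ldots,\lambda_n\}$, and taking $\psi$ to be the bijection of $M$ that fixes $1_\Lambda$ and swaps each pair $\mu_k\leftrightarrow\lambda_k$, one obtains an automorphism $\varphi$ of $\Gamma_P$ extending $\psi$, fixing $1_I$, and preserving all labels in $H$. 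Consequently, $i:=j\varphi\in I'$ satisfies
\[ p_{\lambda_k,i}=p_{\mu_k\varphi,j\varphi}=p_{\mu_k,j}=g_k \]
for every $k$, as required. The dual witness property (for finite subsets of $I'$ mapping into $G^P$) follows by an entirely symmetric argument.

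The substantive work lies in producing the source realization. For each $g_k\neq\epsilon$, Corollary \ref{column same} immediately yields some row $\mu\in\Lambda'$ realizing $g_k$ in any fixed column $j\in I'$, since $g_k\neq p_{1_\Lambda,j}=\epsilon$. The remaining subtleties are (i) arranging all the $g_k$'s to be realized in a \emph{single} column at distinct rows, avoiding the finite forbidden set $\{\lambda_1,\ldots,\lambda_n\}$, and (ii) handling the case $g_k=\epsilon$, which requires the nontrivial fact that $\epsilon\in G^{P'}$. Both are addressed by exploiting the AP of $\text{age}(S)$ provided by homogeneity: starting with the rectangular-group base $\mathcal{M}[H;\{1_I\},\{1_\Lambda\};\epsilon]$ for $H=\langle g_1,\ldots,g_n\rangle$, one iteratively amalgamates $2$-by-$2$ Rees subsemigroups realizing each $g_k$, each time over the common normalization column, to build a single f.g.\ Rees subsemigroup of $S$ simultaneously realizing every $g_k$ in a prescribed column. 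The principal obstacle is (ii): using that $S$ is non-orthodox, one fixes some $a\in G^P\setminus\{\epsilon\}$ (which lies in $G^{P'}$ by the preceding argument) and carefully amalgamates two copies of a $2$-by-$2$ subsemigroup carrying the entry $a$ over a suitable common subsemigroup, producing an amalgam in $\text{age}(S)$ whose embedding into $S$ exhibits a pair $(\mu,i)\in\Lambda'\times I'$ with $p_{\mu,i}=\epsilon$.
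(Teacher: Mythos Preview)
Your overall architecture (produce a \emph{source realization} of the pattern and then \emph{transport} it onto the target rows via the dual of Lemma~\ref{finite left}) is sound, and the transport step is essentially the paper's own final step. The difficulty is entirely in the source realization, and here your argument has genuine gaps.

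For (i), amalgamating ``over the common normalization column'' does not do what you need. If $A$ has column set $\{1_I\}$ and each $2\times 2$ piece $B_k$ has column set $\{1_I,j_k\}$ with $g_k$ at position $(\mu_k,j_k)$, then the AP only guarantees some $D\in\text{age}(S)$ together with embeddings $B_k\hookrightarrow D$ agreeing on $A$; it gives you no control over where the columns $j_k$ land in $D$, and in particular no reason for them to coalesce into a single column. Attempting to force this by enlarging the base $A$ to include the target column $j$ requires already knowing that several $g_k$'s can be realized simultaneously in one column of $S$ --- precisely what you are trying to prove. So the inductive amalgamation is circular as written.

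For (ii), the phrase ``carefully amalgamates two copies of a $2\times 2$ subsemigroup carrying the entry $a$'' is not a proof. Amalgamating two copies of $\bigl(\begin{smallmatrix}\epsilon&\epsilon\\\epsilon&a\end{smallmatrix}\bigr)$ over any natural common subsemigroup produces a sandwich matrix whose non-normalized entries are still just $a$; nothing forces an $\epsilon$ to appear in $P'$. You would need to exhibit a specific amalgam that \emph{must} contain such an entry, and none is given.

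The paper avoids the AP entirely for these two points and instead argues directly. It first shows that every $x\in G^{P'}$ is repeated infinitely often in any non-normalized column (by extending a permutation of finitely many columns that fixes $1_I$ and the relevant group elements, exactly in the spirit of Lemma~\ref{finite left}). It then shows $\epsilon\in G^{P'}$ by a separate explicit construction: extending the automorphism of a $1\times 2$ Rees subsemigroup that swaps $1_\Lambda$ with some $\lambda\in\Lambda'$, and reading off the resulting constraints on the sandwich entries. With these two facts in hand, the source realization is immediate: fix any $\mu\in\Lambda'$ and choose distinct columns $k_1,\dots,k_n$ with $p_{\mu,k_t}=g_t$ (possible since each $g_t$ occurs infinitely often in row $\mu$), then transport as you describe.
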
 
 
 \begin{proof}   We claim that any $x\in G^{P'}$ is repeated infinitely many times in $P'$. 
 Let $x=p_{\lambda,i}\in C(i)$ for some $i\neq 1_I$. For some fixed $n>1$, let $x, p_{\lambda,i_1},\dots, p_{\lambda,i_n}$ be distinct non-identity elements of $R(\lambda)$. 
Consider the f.g. subgroup $H=\langle x,p_{\lambda,i_k}: 1\leq  k \leq n\rangle$ of $G^P$, so that $T=\mathcal{M}[H;\{1_I, i, i_1,\dots,i_n\},\{1_\Lambda\};Q]$ is a f.g. Rees subsemigroup of $S$. 
  For each $1\leq k \leq n$, let $\psi_k$ be the automorphism of $\Gamma_Q$ which swaps $i_1$ and $i_k$, and fixes all other elements.
   Then $\phi_k=[\text{Id}_H,\psi_k,\epsilon,\epsilon]$ is an automorphism of $T$ by Corollary \ref{iso orth}, and so by the homogeneity of $S$ and Corollary \ref{cor: inner} we may extend $\phi_k$ to an automorphism $\phi'_k=[\theta_k,\psi'_k,\epsilon,\epsilon]$ of $S$.
   For each $h\in H$, the element  $(1_I,h,1_\Lambda)$ is fixed by $\phi_k'$, and so by Corollary \ref{cor: inner}  we have $p_{\lambda,i}=p_{\lambda\psi_k',i} = x$ and  $p_{\lambda,i_1}=p_{\lambda\psi_k', i_1\psi_k'}= p_{\lambda\psi_k',i_k}$. By considering each $1 < k \leq n$, it follows from the fact that $p_{\lambda,i_1},\dots,p_{\lambda,i_n}$ are distinct that there are $n$ distinct elements $\lambda, \lambda\psi_2',\dots,\lambda\psi_n'$ of $\Lambda$. Hence $C(i)$ contains $n$ copies of $x$, for arbitrarily large $n$, and the claim follows.

We now claim that $G^P=G^{P'}$, for which it suffices to show that $\epsilon$ appears in $P'$. Fix some $\lambda\in \Lambda'$,  and let $a\in R(\lambda)$ with $a\neq \epsilon$. 
 Then $T=[\langle a \rangle;\{1_I\},\{1_\Lambda , \lambda\}; Q]$ is a f.g. Rees subsemigroup of $S$.  
 Let $\psi$ be the automorphism of $\Gamma_Q$ which swaps $1_\Lambda$ and $\lambda$, so that $[\text{Id}_{\langle a \rangle},\psi,\epsilon,\epsilon]$ is an automorphism of $T$ by Corollary \ref{iso orth}.  Extend the isomorphism to an automorphism $[\theta,\psi',u_i,v_{\lambda}]$ of $S$, noting that as $1_I$ is fixed, we have that $v_{\mu}=u_{1_I}^{-1}$ for all $\mu\in \Lambda$. By the previous claim, there exist infinitely many $i\in I$ such that $p_{\lambda,i}=a$. Fix $i\in I$ such that $p_{\lambda,i}=a$ and $i\psi'\neq 1_I$. 
 Then as $\lambda\psi'=1_\Lambda$ we have 
 \[ p_{\lambda,i}\theta=a\theta = u_{1_I}^{-1} p_{\lambda\psi',i\psi'} u_i = u_{1_I}^{-1} u_i
 \]
 and so for any $\gamma\neq \lambda$ such that $p_{\gamma,i}=a$ we have 
 \[ p_{\gamma,i}\theta = a\theta = u_{1_I}^{-1} p_{\gamma\psi',i\psi'} u_i \Rightarrow p_{\gamma\psi',i\psi'} = \epsilon. 
 \] 
Since both $i\psi'\neq 1_I$ and $\gamma\psi'\neq 1_\Lambda$ we have that $\epsilon\in G^{P'}$, thus proving the claim.

We now show that $\Gamma(S)$ is $G^P$-generic. Let $J=\{j_1,\dots,j_r\}$ be a finite subset of $I'$, and $\alpha:J\rightarrow G^P$ a map given by $i_t\alpha=x_t$. Then by the first claim there exists $\mu\in \Lambda$ and $k_1,\dots,k_r\in I$  such that $p_{\mu,k_t}=x_t$ for each $t$. Let $T=\mathcal{M}[\langle x_1,\dots,x_r \rangle;J\cup \{1_I, k_1,\dots,k_r\},\{1_{\Lambda}\};Q]$ be a Rees subsemigroup of $S$, noting that $Q$ contains only the identity element. Let $\psi$ be the automorphism of $\Gamma_Q$ which swaps $j_t$ and $k_t$ for each $1\leq t \leq r$, and fixes $1_I$ and $1_{\Lambda}$. Then by Corollary \ref{iso orth}, [Id$_{\langle x_1,\dots,x_r \rangle},\psi,\epsilon,\epsilon]$ is an automorphism of $T$, which we may extend to an automorphism $[\theta,\psi',\epsilon,\epsilon]$ of $S$ by Corollary \ref{cor: inner}. Then $(1_I,x_t,1_{\Lambda})$ is fixed, so that
\[ p_{\mu,k_t}\theta = p_{\mu\psi',j_t} = p_{\mu,k_t} = j_t \alpha. 
\] 
Hence $\mu\psi'\in \Lambda'$ is a witness for $J$. A dual argument holds for finite subsets of $\Lambda'$, and so $\Gamma(S)$ is $G^P$-generic.  
 \end{proof}
 
Given a group $G$ and characteristic subgroup $H$, we let $\mathcal{CS}(G;H)$ denote the class of all f.g. completely simple semigroup which are isomorphic to a normalised Rees matrix semigroup of the form $\mathcal{M}[K;J,M;Q]$ with $K\in \text{age}(G)$ and $\langle K^Q  \rangle \in \text{age}(H)$. 
 
 \begin{lemma} Let $S=\mathcal{M}[G;I,\Lambda;P]$ be homogeneous with $\Gamma(S)$ of generic type. Then age($S$)= $\mathcal{CS}(G;G^P)$.   
 \end{lemma}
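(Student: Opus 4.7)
The plan is to prove both inclusions separately.

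For the direction $\text{age}(S)\subseteq \mathcal{CS}(G;G^P)$, given $T$ in the age of $S$, I write $T$ as a normalised Rees matrix semigroup $\mathcal{M}[K;J,M;Q]$ and apply Theorem \ref{iso thm} to an embedding $\phi: T\hookrightarrow S$. The group component $\theta: K\to G$ is then injective, so $K\in \text{age}(G)$, and Corollary \ref{cor:iso E} gives that the restriction $\theta|_{\langle K^Q \rangle}$ is an injective group morphism into $\langle G^P \rangle = G^P$ (a subgroup by Proposition \ref{GP char}), so $\langle K^Q \rangle \in \text{age}(G^P)$. This direction is essentially formal.

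The harder inclusion is $\mathcal{CS}(G;G^P)\subseteq \text{age}(S)$. Given a normalised $T=\mathcal{M}[K;J,M;Q]\in \mathcal{CS}(G;G^P)$, the hypotheses provide embeddings $\eta: \langle K^Q \rangle \hookrightarrow G^P$ and $\iota: K \hookrightarrow G$, which are a priori unrelated. My plan is to build an embedding $T\hookrightarrow S$ in two stages. First, I construct the bipartite-graph part: pushing the induced edge-coloured graph of $T$ on $M' \times J'$ forward through $\eta$ yields a finite $G^P$-edge-coloured bipartite graph, which embeds into $\Gamma(S)$ (which is $G^P$-generic by Lemma \ref{lemma:generic}) via a colour-preserving injection $\psi_0: J' \hookrightarrow I'$, $M' \hookrightarrow \Lambda'$. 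Extending $\psi_0$ by $1_J \mapsto 1_I$ and $1_M \mapsto 1_\Lambda$ produces a bipartite graph morphism $\psi: \Gamma_Q \to \Gamma_P$ satisfying $q_{\mu,j}\eta = p_{\mu\psi,j\psi}$ for all $\mu\in M$ and $j\in J$ (the normalised row and column cases being trivial). Theorem \ref{iso thm} then yields an embedding $[\eta,\psi,\epsilon,\epsilon]: \langle E(T) \rangle \hookrightarrow \langle E(S) \rangle$.

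The second stage is to lift this to $T\hookrightarrow S$, and here is where I expect the main obstacle: to apply Theorem \ref{iso thm} I need a single group embedding $\theta: K \to G$ whose restriction to $\langle K^Q \rangle$ is exactly $\eta$, but $\iota$ need not satisfy this. My plan is to invoke the homogeneity of $G$ (Proposition \ref{prop:G, E hom}): both $\iota|_{\langle K^Q \rangle}$ and $\eta$ are embeddings of the f.g. group $\langle K^Q \rangle$ into $G$, so the partial isomorphism $(g)\iota \mapsto (g)\eta$ between their finitely generated images extends to some $\sigma\in \aut(G)$, and $\theta:=\iota\sigma$ is the required embedding. Then $\phi=[\theta,\psi,\epsilon,\epsilon]$ is an embedding $T\hookrightarrow S$, since each $q_{\mu,j}\in \langle K^Q \rangle$ forces $q_{\mu,j}\theta = q_{\mu,j}\eta = p_{\mu\psi,j\psi}$. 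This reconciliation between the ``group'' and ``idempotent'' data, mediated by the homogeneity of $G$, is the technical heart of the argument, with the genericity of $\Gamma(S)$ playing the analogous role for the bipartite side.
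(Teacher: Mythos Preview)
Your proof is correct and follows essentially the same route as the paper's. The paper handles $\mathcal{CS}(G;G^P)\subseteq\text{age}(S)$ by first proving, via induction on the number of rows using genericity, that every finite matrix over $G^P$ occurs as a submatrix of $P$, and then declaring ``without loss of generality'' that $K\leq G$ with $K^Q\subseteq G^P$, so that $T$ is literally a Rees subsemigroup of $S$. Your argument unpacks precisely what that WLOG conceals: the homogeneity of $G$ (Proposition~\ref{prop:G, E hom}) is needed to reconcile the a priori unrelated embeddings $\iota\colon K\hookrightarrow G$ and $\eta\colon\langle K^Q\rangle\hookrightarrow G^P$ into a single $\theta\colon K\hookrightarrow G$ with $\theta|_{\langle K^Q\rangle}=\eta$. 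Conversely, your appeal to the universality of the generic graph for finite $G^P$-coloured bipartite graphs is exactly what the paper's row-by-row induction establishes explicitly. One minor correction: the genericity of $\Gamma(S)$ is the \emph{hypothesis} of this lemma, not a consequence of Lemma~\ref{lemma:generic} (which assumes $G^P$ infinite).
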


 \begin{proof} We claim that if $Q$ is a finite matrix over $G^P$, then $Q$ appears as a submatrix of $P$. We proceed by induction on the number of rows of $Q$, noting that the base case is immediate from the previous lemma. For some $m\in \mathbb{N}$, assume the claim holds for all  matrices over $G^P$ with less than $m$ rows. Let $Q=(q_{k,\ell})_{1\leq \ell \leq n, 1\leq k \leq m}$ for some $n\in \mathbb{N}$. Then by the inductive hypothesis, the submatrix of $Q$ obtained by removing row $m$ appears as a submatrix $P^*$ of $P$, say $q_{k,\ell}=p_{\lambda_k,i_{\ell}}$. 
Since $\Gamma(S)$ is $G^P$-generic there exist infinity many $\lambda\in \Lambda'$ such that $p_{\lambda,i_{\ell}}=q_{m,\ell}$ for each $1\leq \ell \leq n$. The claim then follows by choosing $\lambda$ such that $\lambda \neq \lambda_k$ for each $1 \leq k \leq m$. 
 
% By the base case, there exists $\lambda \in \Lambda$ and $j_1,\dots,j_n\in I$ such that $q_{m,k} = p_{\lambda,j_k}$. Since $\Gamma(S)$ is $G(P)$-generic by the previous result there exists $\lambda'\in \Lambda$ such that $p_{\lambda',i_k}= q_{m,k}$. In fact we may use our usual technique to show that there exists infinitely many $\lambda'$ with this property. Indeed, let $p_{\lambda',j}=a$ and $p_{\lambda',k_1}=\cdots p_{\lambda',k_r}=b\neq a$. Then by fixing each $i_k$, and by swapping $j$ with either $k_r$ we obtain from Lemma 6.6 that there exists $r$ such $\lambda'$, for any $r>1$. We may thus assume that $\lambda'$ is not a row in $P^*$, and so $P^*$ with row $\lambda'$ added gives the matrix $Q$. 
% 
 Now let $T$ be a member of $\mathcal{CS}(G;G^P)$, so we may assume without loss of generality that $T=\mathcal{M}[K;J,M;Q]$, where $T$ is normalised, $K$ is a f.g. subgroup of $G$, and $K^Q$ is a subset of $G^P$. By the previous claim, $Q$ forms a submatrix of $P$, and so $T$ forms a Rees subsemigroup of $S$. Hence $\mathcal{CS}(G;G^P)$ is a subclass of age($S$). The converse is immediate. 
 \end{proof}

 \begin{proposition}  Let $G$ be a homogeneous group with characteristic subgroup $H$. Then $\mathcal{CS}(G;H)$ forms a Fra\"iss\'e class. 
 \end{proposition}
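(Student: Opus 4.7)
The plan is to verify the five Fra\"iss\'e conditions for $\mathcal{CS}(G;H)$, drawing on two auxiliary Fra\"iss\'e classes: age$(G)$ and age$(H)$. The latter is obtained from the hypothesis via the observation that $H$ is itself homogeneous: any isomorphism $\theta:H_1\to H_2$ between f.g.\ subgroups of $H$ is in particular an isomorphism between f.g.\ subgroups of $G$, so by homogeneity of $G$ extends to some $\theta'\in\aut(G)$, and since $H$ is characteristic in $G$, $\theta'$ restricts to an automorphism of $H$ extending $\theta$. Closure under isomorphism is immediate from the definition, and countability follows since each isomorphism type in $\mathcal{CS}(G;H)$ is determined by a f.g.\ subgroup of $G$, two finite index sets, and a normalised sandwich matrix with entries in a f.g.\ subgroup of $H$. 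For HP, given $T=\mathcal{M}[K;J,M;Q]\in\mathcal{CS}(G;H)$ and a f.g.\ completely simple subsemigroup $T'\cong\mathcal{M}[K';J',M';Q']$ in normalised form, the maximal subgroup $K'$ of $T'$ embeds in a maximal subgroup of $T$, so $K'\in$ age$(G)$; and since the idempotents of $T'$ lie in $T$, $\langle E(T')\rangle$ is a completely simple subsemigroup of $\langle E(T)\rangle=\mathcal{M}[\langle K^Q\rangle;J,M;Q]$, so its maximal subgroup $\langle K'^{Q'}\rangle$ embeds in $\langle K^Q\rangle\in$ age$(H)$, placing $T'\in\mathcal{CS}(G;H)$.

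For AP, let $[T_0;T_1,T_2]$ be an amalgam in $\mathcal{CS}(G;H)$. Following the normalisation of Theorem \ref{thm: E,G}, I would arrange each $T_k=\mathcal{M}[K_k;J_k,M_k;Q_k]$ to be normalised along a common row $1_\Lambda\in M_0$ and column $1_I\in J_0$, with $K_1\cap K_2=K_0$, $J_1\cap J_2=J_0$, $M_1\cap M_2=M_0$, and $H_1\cap H_2=H_0$, where $H_k=\langle K_k^{Q_k}\rangle$. Applying AP in age$(H)$ to $[H_0;H_1,H_2]$ produces $H_3\in$ age$(H)$ with embeddings $\sigma_k:H_k\hookrightarrow H_3$ that agree on $H_0$. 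I then perform a two-stage amalgamation in age$(G)$: first amalgamate $[H_1;K_1,H_3]$ to obtain $L_1\in$ age$(G)$ identifying $H_1\subseteq K_1$ with $\sigma_1(H_1)\subseteq H_3$; then amalgamate $[\langle K_0,H_2\rangle;L_1,K_2]$, where $\langle K_0,H_2\rangle$ sits in $L_1$ via $K_0\subseteq K_1\subseteq L_1$ together with $\sigma_2(H_2)\subseteq H_3\subseteq L_1$, and sits in $K_2$ as a subgroup, to obtain $K\in$ age$(G)$. Form the candidate amalgam $T=\mathcal{M}[K;J,M;Q]$ with $J=J_1\cup J_2$, $M=M_1\cup M_2$, and sandwich matrix $Q$ whose $M_k\times J_k$ block is $Q_k$, whose remaining cross entries are $\epsilon_K$, and whose normalised row and column are $\epsilon_K$. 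Then $\langle K^Q\rangle$ lies in the image of $H_3$ inside $K$, hence in age$(H)$, so $T\in\mathcal{CS}(G;H)$, and the natural inclusions give embeddings $g_k:T_k\hookrightarrow T$ by Theorem \ref{iso thm} that coincide on $T_0$. JEP follows by specialising AP to $T_0=\mathcal{M}[\{\epsilon\};\{1\},\{1\};(\epsilon)]$, which embeds into every member of $\mathcal{CS}(G;H)$ via the normalised coordinate.

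The principal obstacle is coherence of the nested amalgamation in age$(G)$: the subgroup $H_0$ reaches $K$ along four routes, namely $K_0\subseteq K_1\subseteq L_1\subseteq K$, $K_0\subseteq K_2\subseteq K$, $\sigma_1(H_0)\subseteq H_3\subseteq L_1\subseteq K$, and $\sigma_2(H_0)\subseteq H_3\subseteq L_1\subseteq K$, and all four must be identified. The first amalgamation forces the $K_1$- and $H_1$-routes to match on $H_1\supseteq H_0$; the second forces the $L_1$- and $K_2$-routes to match on $\langle K_0,H_2\rangle\supseteq H_0$; and $\sigma_1|_{H_0}=\sigma_2|_{H_0}$ by construction of $H_3$. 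Once this coherence is checked, verifying that each $g_k$ is a morphism via Theorem \ref{iso thm} (using that entries of $Q$ inside the $M_k\times J_k$ block agree with $Q_k$) and that $g_1$ and $g_2$ coincide on $T_0$ is a routine diagram chase, and $\mathcal{CS}(G;H)$ is a Fra\"iss\'e class.
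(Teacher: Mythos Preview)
Your verification of countability, closure under isomorphism, HP, and the observation that $H$ is homogeneous are all fine and essentially match the paper. The gap is in your AP argument, specifically in the second stage of the nested amalgamation. You assert that $\langle K_0,H_2\rangle$ ``sits in $L_1$ via $K_0\subseteq K_1\subseteq L_1$ together with $\sigma_2(H_2)\subseteq H_3\subseteq L_1$'', but this does not define an embedding of the group $\langle K_0,H_2\rangle\leq K_2$ into $L_1$. Having compatible maps on the generating sets $K_0$ and $H_2$ (even if they agree on $K_0\cap H_2$, which itself need not equal $H_0$) only gives a map out of a free product with amalgamation, not out of the specific subgroup of $K_2$ they generate: relations of the form $kh=h'k'$ holding in $K_2$ have no reason to be respected in $L_1$. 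So the amalgam $[\langle K_0,H_2\rangle;L_1,K_2]$ is not well-posed, and the argument collapses.

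The paper avoids this entirely by a much simpler route. It amalgamates only once, in $\text{age}(G)$, over $[K_0;K_1,K_2]$, obtaining $K\in\text{age}(G)$ with embeddings $\phi_k:K_k\hookrightarrow K$, and builds $T=\mathcal{M}[K;J_1\cup J_2,M_1\cup M_2;Q]$ with $Q$ assembled from the $Q_k\phi_k$ and $\epsilon$'s in the cross blocks. The only issue is then to check $\langle K^Q\rangle\in\text{age}(H)$. Here the key lemma you are missing is: \emph{any f.g.\ subgroup of $G$ that is isomorphic to a subgroup of $H$ is actually contained in $H$}. Indeed, if $A\leq G$ and $\alpha:A\to A'\leq H$ is an isomorphism, extend $\alpha$ to $\alpha'\in\aut(G)$ by homogeneity; since $H$ is characteristic, $A=(\alpha')^{-1}(A')\leq(\alpha')^{-1}(H)=H$. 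Embedding $K$ in $G$, both $\langle K_k^{Q_k}\rangle\phi_k$ land inside $H$, so the group they generate does too, and $T\in\mathcal{CS}(G;H)$. This single-stage argument makes the separate amalgamation in $\text{age}(H)$, and hence your nested construction, unnecessary.
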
 
 
 \begin{proof} Note that $H$, being a characteristic subgroup of $G$, is homogeneous, and so age($G$) and age($H$) form Fra\"iss\'e classes. By construction $\mathcal{K}=\mathcal{CS}(G;H)$ is closed under isomorphism. If $K$ is a f.g. group, $A$ is a finite subset of $A$ and $I$ and $\Lambda$ are finite index sets, then the number of Rees matrix semigroups $\mathcal{M}[K;I,\Lambda;Q]$ such that $Q^K=A$ is finite. Hence, as  age($G$) is countable, it follows that $\mathcal{CS}(G;H)$ is countable. Similarly, the hereditary property is inherited from  age($G$) and age($H$). We now show that $\mathcal{K}$ has the AP, from which the proof can be easily adapted to show the JEP. 
 
The proof of the AP follows closely to the argument given by Clarke in  \cite{Clarke} to show that the variety of completely simple semigroups whose subgroups lies in some variety of groups has the AP. Let $[M_0;M_1,M_2]$ be an amalgam in age($S$), where $M_k=\mathcal{M}[G_k;I_k,\Lambda_k;P_k]$ $(k=0,1,2)$. We may again assume that $H_1\cap H_2=H_0$, $I_1\cap I_2 = I_0$, $\Lambda_1\cap \Lambda_2=\Lambda_0$, and each $M_k$ is normalised with $1_{I_1}=1_{I_0}=1_{I_2}$ and $1_{\Lambda_1}=1_{\Lambda_0}=1_{\Lambda_2}$. We may also assume that $p_{\mu,j}^{(0)}=  p_{\mu,j}^{(1)} =p_{\mu,j}^{(2)}$ for each $j\in I_0$,  $\mu \in \Lambda_0$. 

The amalgam $[G_0;G_1,G_2]$ can be be embedded in a group $K\in \text{age}(G)$, by embeddings $\phi_1$ and $\phi_2$ of $G_1$ and $G_2$, respectively. Let $\bar{I}=I_1\cup I_2$ and $\bar{\Lambda}=\Lambda_1\cup \Lambda_2$.  Define the $\bar{\Lambda} \times \bar{I}$ matrix $Q=(q_{\lambda,i})$ by 
 \[ q_{\lambda,i} = \begin{cases} p_{\lambda,i}^{(1)}\phi_1, & \mbox{if } \lambda \in \Lambda_1 \mbox{ and } i\in I_1, \\  p_{\lambda,i}^{(2)}\phi_2, & \mbox{if } \lambda \in \Lambda_2 \mbox{ and } i\in I_2, \\
\epsilon & \mbox{otherwise}, \end{cases}
\] 
 and put $T=\mathcal{M}[K;\bar{I},\bar{\Lambda};Q]$. Note that $T$ is normalised along row $1_{I_0}$ and down column $1_{\Lambda_0}$. The map $\theta_k = [\phi_i,\iota_k,\epsilon,\epsilon]$ from $M_k$ to $T$ ($k=1,2$) is an embedding by Theorem \ref{iso thm}, where $\iota_k$ is the inclusion embedding. Moreover, $\theta_1$ and $\theta_2$ clearly agree on $M_0$, so $[M_0;M_1,M_2]$ can be embedded in $T$. It therefore suffices to show that $T$ is a member of $\mathcal{CS}(G;H)$. 
% as $p_{\lambda,1_{I_0}}\phi_k^{(k)} = e = p_{i,1_{\Lambda_0}}\phi_k^{(k)}$ for each $i\in I_k, \lambda\in \Lambda_k$ ($k=1,2$).

Note that if $\theta:K_1\rightarrow K_2$ is an embedding of members of age($G$), and if $K_1'\leq K_1$ is a member of age($H$), then so too is $K_1'\theta$ (simply extend the isomorphism between $K_1'$ and $K_1'\theta$ to an automorphism of $G$, noting that $H$ is a characteristic subgroup of $G$). Hence, as $\langle G_1^{P_1} \rangle$ and $\langle G_2^{P_2} \rangle$ are members of $\text{age}(H)$, so too are $\langle G_1^{P_1}\phi_1\rangle$ and $\langle G_2^{P_2}\phi_2 \rangle$, and thus $\langle K^Q \rangle$, being generated by these groups, is a member of age($H$). 
 \end{proof}
 
Every homogeneous Rees matrix semigroup of generic type can therefore be built from a group $G$, a characteristic subgroup $H$, and an $H$-generic bipartite graph.  As a consequence we obtain all homogeneous Rees matrix semigroups with infinite sandwich matrix, as either a direct product of a group and a rectangular band, or the Fra\"iss\'e limit of some $\mathcal{CS}(G;H)$. We summarise our findings: 
 
 \begin{theorem} \label{thm: classify} A completely simple semigroup $S$ is homogeneous if and only if there exists a homogeneous group $G$ with characteristic subgroup $H$ such that 
 \begin{enumerate}
 \item $S=G\times B$ for some rectangular band $B$;
 \item $S=\mathcal{M}[G;\underline{2},\underline{2};P_1]$ where $H=\{\epsilon,a\}\cong\mathbb{Z}_2$ and $P'_1=(a)$; 
 \item $S=\mathcal{M}[G;\underline{3},\underline{3};P_2]$ where $H=\{\epsilon,a,a^{-1}\}\cong\mathbb{Z}_3$ and $\Gamma(S)$ is $\{a,a^{-1}\}$-edge coloured, with edges having $a$ as colour forming a perfect matching;  
 \item $S=\mathcal{M}[G;\underline{4},\underline{4};P_3]$ where $H=\{\epsilon,a\}\cong\mathbb{Z}_2$ and $\Gamma(S)$  is $\{\epsilon,a\}$-edge coloured, with edges having $\epsilon$ as colour forming a perfect matching;  
 \item $S$ is the Fra\"iss\'e limit of $\mathcal{CS}(G;H)$, that is, the Rees matrix semigroup $\mathcal{M}[G;I,\Lambda;P]$ with $G^P=H$ and $\Gamma(S)$ being $H$-generic. 
 \end{enumerate}
 \end{theorem}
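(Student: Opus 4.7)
The theorem is essentially a compilation of results established in this section, so the plan is largely one of organisation and case analysis. Let $S = \mathcal{M}[G;I,\Lambda;P]$ be a completely simple semigroup, and set $H := G^P$.

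For the forward direction, assume $S$ is homogeneous. Proposition \ref{prop:G, E hom} gives that $G$ is homogeneous, and Proposition \ref{GP char} gives that $H$ is a characteristic subgroup of $G$. I would split according to whether $H$ is finite or infinite. If $H$ is infinite, then $S$ is necessarily non-orthodox, so Lemma \ref{lemma:generic} applies to give that $\Gamma(S)$ is $H$-generic, placing us in case (5); the preceding lemma identifying age$(S)$ with $\mathcal{CS}(G;H)$ and Fra\"iss\'e's Theorem then show that $S$ is the Fra\"iss\'e limit of $\mathcal{CS}(G;H)$. If $H$ is finite, then $\Gamma(S)$ is finitely coloured, hence homogeneous by Proposition \ref{prop: graph hom}. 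Applying Theorem \ref{thm:edge class}, either $\Gamma(S)$ is of generic type — in which case we again land in case (5) as above — or it is monochromatic (case (1) of Theorem \ref{thm:edge class}) or a perfect matching together with its complement (case (2) of Theorem \ref{thm:edge class}); the finite-colouring classification lemma then forces $S$ into one of cases (1)--(4).

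For the reverse direction, I would take each case in turn. Case (1) follows from Theorem \ref{orth}, while cases (2), (3) and (4) are immediate from the forward-and-back statement of the preceding finite-colouring lemma. For case (5), the preceding proposition establishes that $\mathcal{CS}(G;H)$ is a Fra\"iss\'e class, so its Fra\"iss\'e limit $S$ exists and is homogeneous by definition. It then remains to check that $S$ has the asserted form: namely that $S \cong \mathcal{M}[G;I,\Lambda;P]$ with $G^P = H$ and $\Gamma(S)$ being $H$-generic. For this, I would exploit the fact that age$(S) = \mathcal{CS}(G;H)$. The maximal subgroups of $S$, being a homogeneous group whose age is age$(G)$, must be isomorphic to $G$ by uniqueness of the Fra\"iss\'e limit of age$(G)$; similarly $\langle G^P \rangle \cong H$. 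Genericity of $\Gamma(S)$ follows by using Fra\"iss\'e's extension property: given any finite colouring function $\alpha$ on a finite subset of the left (or right) vertex set, one embeds a suitable f.g.\ Rees matrix semigroup containing the required witness into $S$, and applies homogeneity together with Corollary \ref{cor: inner}.

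The main obstacle is the identification in case (5) of the Fra\"iss\'e limit with a Rees matrix semigroup of the required form — in particular showing that the bipartite graph structure of the limit is genuinely $H$-generic and that the group parameter is the full group $G$ rather than a proper subgroup or quotient. This is not difficult but requires carefully combining the explicit realisation of the limit via the chain construction in Fra\"iss\'e's theorem with the characterisations given by Theorem \ref{iso thm} and Corollary \ref{cor: inner}. Once this is dispatched, the five listed families cover precisely the homogeneous completely simple semigroups, and the classification is complete.
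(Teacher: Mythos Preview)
Your proposal is correct and matches the paper's approach: the theorem is stated in the paper as a summary (``We summarise our findings'') with no separate proof, and your organisation of the forward and reverse directions into the cited lemmas and propositions is exactly how the paper assembles it. The one point you flag as an obstacle---identifying the Fra\"iss\'e limit of $\mathcal{CS}(G;H)$ with a concrete Rees matrix semigroup $\mathcal{M}[G;I,\Lambda;P]$ having $G^P=H$ and $\Gamma(S)$ $H$-generic---is indeed not spelled out in the paper either; it is asserted in the ``that is'' clause of item (5), and your sketch via uniqueness of Fra\"iss\'e limits and the extension property is the natural way to close that gap.
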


 \section{Homogeneous semigroups} 
 
A semigroup $S$ is \textit{inverse} if every element has a unique inverse, that is, for each $x\in S$ there exists a unique $y\in S$ with $x=xyx$ and $y=yxy$. Inverse semigroups may be naturally considered as a unary semigroup, with unary operation mapping elements to their inverses.  In \cite{Quinninv} the author showed that the condition that an inverse semigroup is homogeneous as a unary semigroup is stronger than the condition that it is homogeneous as a semigroup, and simple examples were  constructed to show that the two concepts of homogeneity differ. In this section we continue this line of work by investigating the homogeneity of  completely simple semigroups as semigroups. The key difference is that we shall be considering isomorphisms between \textit{all} subsemigroups, and as such it will be a stronger, albeit less natural, condition. This work is further motivated by Proposition 6.2 and Lemma 6.3, which state that regular homogeneous semigroups with  either elements of infinite order or finitely many idempotents are completely simple.  Given a completely simple semigroup with a subset $X$, to avoid notation clashes we shall let $\langle X \rangle_S$ denote the subsemigroup generated by $X$ (rather than the unary subsemigroup which we denoted by $\langle X \rangle$). 

A semigroup $S$ is called \textit{periodic} if every element is of finite order, that is, if the monogenic subsemigroup $\langle x \rangle_S$ is finite for each $x\in S$. On the class of periodic completely simple semigroups, every subsemigroup is necessarily completely simple (folklore, and remarked upon in \cite{Ant}). Hence our two notions of homogeneity for a completely simple semigroup intersect in this case: 

\begin{lemma}\label{periodic same} A periodic completely simple semigroup is a homogeneous semigroup if and only if it is a homogeneous completely simple semigroup. 
\end{lemma}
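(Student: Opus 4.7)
The plan is to show that, in the periodic case, the two signatures give rise to the same collection of finitely generated substructures of $S$ and to the same isomorphisms between them; the equivalence of the two notions of homogeneity will follow immediately.

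First I would exploit the folklore fact quoted just above the lemma: every subsemigroup $T$ of a periodic completely simple semigroup $S$ is itself completely simple. The key consequence I want to extract is that for each $x \in T$ there is an integer $n \geq 2$ with $x^n = x$, so the relative inverse $x^{-1}$ (the group inverse of $x$ in its maximal subgroup) coincides with the power $x^{n-1}$, and in particular lies in $T$. Thus every subsemigroup of $S$ is automatically closed under the unary operation, and for every finite $X \subseteq S$ one has $\langle X \rangle_S = \langle X \rangle$. Hence the finitely generated substructures of $S$ in the semigroup signature are precisely the finitely generated completely simple substructures in the unary signature.

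Next I would verify that any semigroup isomorphism $\phi \colon T_1 \to T_2$ between completely simple semigroups automatically preserves the unary operation. This uses the fact that the relative inverse of $x$ is uniquely characterised among the elements of $T_1$ by the purely multiplicative equations $xyx = x$, $yxy = y$ and $xy = yx$; since $\phi$ transports solutions of these equations to solutions for the image, one has $\phi(x^{-1}) = \phi(x)^{-1}$.

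Combining these two points, an isomorphism between finitely generated subsemigroups of $S$ in the semigroup signature is exactly the same data as an isomorphism between finitely generated completely simple subsemigroups in the unary signature, and a semigroup automorphism of $S$ is simultaneously a unary semigroup automorphism. The two homogeneity statements therefore assert precisely the same thing. The argument presents no substantive obstacle; the only step requiring a little care is the algebraic characterisation of the relative inverse by three multiplicative equations, which is a standard fact about completely regular semigroups.
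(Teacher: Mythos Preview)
Your approach is correct and matches the paper's: the paper states the lemma as an immediate consequence of the folklore fact (cited just before) that every subsemigroup of a periodic completely simple semigroup is completely simple, and gives no further argument, so you have simply spelled out what the paper leaves implicit. One small slip worth fixing: from $x^n = x$ with $n \geq 2$ you get that $x^{n-1}$ is the \emph{idempotent} of the maximal subgroup containing $x$, not its inverse; the relative inverse is $x^{n-2}$ when $n \geq 3$ (and $x$ itself when $n=2$), though of course this still lies in $\langle x \rangle_S$, so the conclusion is unaffected.
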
 

On the other hand, subsemigroups of non-periodic completely simple semigroups can be unwieldy, and are considered in \cite{Ant}. 
  Indeed, even  in the case of groups, it is not known if a homogeneous group is necessarily homogeneous as a semigroup (the abelian case is proved to hold in \cite{Quinninv}). \newline 
 
 \noindent {\bf Open Problem 1:} Characterise which homogeneous completely simple semigroups are homogeneous semigroups. \newline

In Corollary 6.3 of \cite{Quinninv}, a regular homogeneous semigroup with a non-periodic element contained in a subgroup is shown to be completely simple. We now generalize this to show that an answer to Open Problem 1 would in fact classify all non-periodic regular homogeneous semigroups. 

Each regular semigroup $S$ comes equipped with a quasi-order $\leq_{\mathcal{R}}$, known as \textit{Green's right quasi-order}, defined by $a  \leq_{\mathcal{R}}  b$ if and only if there exists $u\in S$ such that $a=bu$. Recall that the associated equivalence relation is Green's $\mathcal{R}$-relation. Note that $\leq_{\mathcal{R}}$ is preserved by morphisms, that is, if $\phi:S\rightarrow T$ is a morphism of semigroups and if $a \leq_{\mathcal{R}} b$ in $S$ then $a\phi \leq_{\mathcal{R}} b\phi$ in $T$. 

 Recall that the set of idempotents $E(S)$ of a semigroup comes equipped with a \textit{natural order} $\leq$, defined by $e\leq f$ if and only if $ef=fe=e$.
 Any $e\in E(S)$ is a left identity for its $\mathcal{R}$-class. Consequently, if $e,f\in E(S)$ then $e \leq_{\mathcal{R}} f$ if and only if $ef=e$, so that $e\leq f$ implies that $e \leq_{\mathcal{R}} f$.

\begin{proposition}\label{non per cr}  Let $S$ be a regular homogeneous semigroup. If $S$ is non-periodic then $S$ is completely simple.  
\end{proposition}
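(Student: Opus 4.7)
The plan is to reduce the proposition to \cite[Corollary~6.3]{Quinninv}, which proves $S$ completely simple under the stronger hypothesis that the non-periodic element lies in a maximal subgroup of $S$. It therefore suffices to show that the given non-periodic $a\in S$ itself lies in such a maximal subgroup.

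Fix a non-periodic $a\in S$, so that $\langle a\rangle_S\cong(\mathbb{N}^+,+)$. Recall that $a$ belongs to some maximal subgroup of $S$ if and only if its $\mathcal{H}$-class $H_a$ is a subgroup, which in turn (by a standard criterion, since $a\cdot a=a^2$) is equivalent to $a\,\mathcal{H}\,a^2$; that is, both $a\,\mathcal{R}\,a^2$ and $a\,\mathcal{L}\,a^2$ must hold. The entire task reduces to verifying these two Green's relations; I treat the $\mathcal{R}$-case, the $\mathcal{L}$-case being symmetric.

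Homogeneity provides the following automorphisms of $S$. The isomorphism $\langle a\rangle_S\to\langle a^2\rangle_S$, $a^n\mapsto a^{2n}$, extends to $\Phi\in\aut(S)$ with $\Phi(a)=a^2$; its inverse isomorphism extends to $\Psi\in\aut(S)$, and then $b:=\Psi(a)$ is non-periodic and satisfies $b^2=a$. Iterating $\Psi$ produces non-periodic elements $b_0=a,\,b_1=b,\,b_2,\ldots$ with $b_n^{\,2}=b_{n-1}$ for each $n\geq 1$, so each $b_n$ is a $2^n$-th root of $a$ inside $S$. Since automorphisms preserve the quasi-order $\leq_{\mathcal{R}}$, a putative strict descent $a>_{\mathcal{R}}a^2$ would propagate under iterated $\Phi^{\pm 1}$ to a strictly ascending chain $a<_{\mathcal{R}}b_1<_{\mathcal{R}}b_2<_{\mathcal{R}}\cdots$ of pairwise distinct non-periodic elements, and dually for descending chains of powers of $a$.

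The plan is then to exploit this rigid bi-infinite chain, together with further homogeneity-produced automorphisms arising from the isomorphisms $\langle a\rangle_S\to\langle b_k\rangle_S$, $a\mapsto b_k$, for varying $k$, in order to exhibit a partial isomorphism of finitely generated subsemigroups of $S$ whose only possible extension would be forced to preserve one $\leq_{\mathcal{R}}$-ordering of a finite tuple while the image tuple's $\leq_{\mathcal{R}}$-ordering is forced to be different. This contradicts homogeneity, so $a\,\mathcal{R}\,a^2$, and dually $a\,\mathcal{L}\,a^2$. Then $H_a$ is a maximal subgroup containing the non-periodic element $a$, and \cite[Corollary~6.3]{Quinninv} concludes that $S$ is completely simple. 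The principal obstacle is the explicit construction of the non-extendable partial isomorphism in the preceding step — turning the rigidity of the dyadic root-tree under $\Phi, \Psi$ and the further automorphisms $\Theta_k$ into an actual inconsistency — which is the technically delicate heart of the argument.
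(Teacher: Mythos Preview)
Your proposal has a genuine gap: the crucial contradiction is never produced. You correctly observe that if $a>_{\mathcal R}a^{2}$ strictly, then iterating the automorphisms $\Phi,\Psi$ yields a bi-infinite strict $\leq_{\mathcal R}$-chain $\cdots<_{\mathcal R}a^{2}<_{\mathcal R}a<_{\mathcal R}b_{1}<_{\mathcal R}b_{2}<_{\mathcal R}\cdots$. But the mere existence of such a chain is perfectly compatible with homogeneity; you need a \emph{finite} configuration witnessing an inconsistency, and you give no candidate. All the monogenic subsemigroups $\langle b_{k}\rangle_S$ are isomorphic copies of $(\mathbb N^{+},+)$, every isomorphism among them is determined by where the generator goes, and every such partial isomorphism \emph{does} extend --- indeed, this is precisely how you obtained $\Phi$, $\Psi$ and the $\Theta_{k}$. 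To manufacture a non-extendable partial isomorphism you would have to control the structure of two-generator subsemigroups such as $\langle a,b_{1}\rangle_S$ well enough to compare them, and nothing in your set-up provides this. What you label ``the technically delicate heart of the argument'' is in fact the entire argument; as written this is a strategy, not a proof.

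The paper proceeds quite differently and sidesteps this difficulty altogether. Rather than trying to force the non-periodic element $x$ into a subgroup, it uses regularity to pick an idempotent $e$ with $x\,\mathcal R\,e$, computes explicitly that $A=\langle x,xe\rangle_S=\{x^{n},x^{n}e:n\ge1\}$, and checks that swapping $x^{n}\leftrightarrow x^{n}e$ is an automorphism of $A$. Extending this to $\theta\in\aut(S)$ gives $x=(xe)\theta=(x\theta)(e\theta)=xe\cdot(e\theta)$, whence $x\,\mathcal R\,xe$. A second, equally concrete, partial isomorphism (sending $e$ to any $f\ge e$ while fixing powers of $xe$) then forces $e\,\mathcal R\,f$ and hence $e=f$, so $e$ is maximal; homogeneity transports maximality to every idempotent, and $S$ is completely simple. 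The point is that both partial isomorphisms used are explicit, small, and verifiable --- no open-ended search for a contradiction is required.
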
 

\begin{proof} By \cite[Theorem 3.3.3]{Howie94} it suffices to show that each idempotent of $S$ is primitive. Let $x$ be an element of $S$ of infinite order. If $x$ is contained in a subgroup of $S$ then $S$ is completely simple by \cite{Quinninv}, so we assume the contrary. Since $S$ is regular we may pick some idempotent $e$ such that $x \, \mathcal{R} \, e$. Consider the subsemigroup of $S$ given by $A=\langle x,xe \rangle_S$. Since $e$ is a left identity for $x$ we have for any $n,m\in \mathbb{N}$, 
\[ (x^ne)(x^me)=x^{n+m}e, \quad x^n(x^me)=x^{n+m}e, \quad (x^ne)x^m = x^{n+m},
\] 
and so $A=\{x^n,x^ne:n\in \mathbb{N}\}$. Notice that $xe$ has infinite order, since if $x^ne=x^me$ then $x^{n+1}=(x^ne)x=(x^me)x=x^{m+1}$, a contradiction. It follows from the multiplication in $A$ that the map swapping $x^n$ with $x^ne$ ($n\in \mathbb{N}$) is an automorphism of $A$. 
By the homogeneity of $S$ we may extend the map to an automorphism $\theta$ of $S$. Then $(xe)\theta=xe (e\theta)=x$, and so $x \, \mathcal{R} \, xe$. Since $e$ is an identity of $xe$ we have, for any idempotent $f\geq e$,
\[ f(xe)=f(exe)=(fe)xe=e(xe)=xe = xef. 
\] 
Hence every idempotent $f\geq e$ is an identity of $xe$, and so the map $\phi$ from $\langle xe,e \rangle_S$ to $\langle xe, f \rangle_S$  mapping $e$ to $f$ and fixing all other elements is an isomorphism. Extending $\phi$ to an automorphism of $S$, then  $xe \, \mathcal{R} \, f$, so that $e \, \mathcal{R} \, f$. Hence $e=f$, and so $e$ is a maximal idempotent under the natural ordering. Now let $g\in E(S)$. Since idempotents generate trivial semigroups, it follows by the homogeneity of $S$ that there exists an automorphism $\theta$ of $S$ such that $e\theta =g$, and so $g$ is also maximal. Hence all idempotents of $S$ are primitive, and so $S$ is completely simple. 
\end{proof}

%Note: the proof above does not work in the periodic case. If $x^n=x^{n+r}$ and $e=xx^{-1}$ then $xe$ has order $n-1$ and period $r$. Indeed, $x^{n-1}e = x^{n-1}xx^{-1}=x^{n+r}x^{-1} = x^{n+r-1}e$. 

\begin{lemma} A regular homogeneous semigroup with finite set of idempotents is a homogeneous completely simple semigroup. 
\end{lemma}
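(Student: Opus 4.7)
The plan is to establish two things: first that $S$ is completely simple, and second that the homogeneity of $S$ as a semigroup lifts to homogeneity in the completely simple (unary) signature.

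For complete simplicity, I would first note that any two singleton subsemigroups $\{e\}, \{f\}$ (with $e, f \in E(S)$) are trivially isomorphic, so semigroup homogeneity guarantees that $\operatorname{Aut}(S)$ acts transitively on $E(S)$. Since automorphisms preserve the natural order $\leq$ on $E(S)$, this transitive action on a finite set forces $E(S)$ to be an antichain; in particular every idempotent is primitive. For simplicity, regularity ensures each $\mathcal{J}$-class contains an idempotent, so there are only finitely many $\mathcal{J}$-classes and $\operatorname{Aut}(S)$ still acts transitively on them; preservation of Green's quasi-order $\leq_{\mathcal{J}}$ then forces the $\mathcal{J}$-classes to form an antichain as well. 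Consequently, for each $a \in S$, the principal two-sided ideal $S^1 a S^1$ reduces to the $\mathcal{J}$-class $J_a$, exhibiting $J_a$ as a two-sided ideal of $S$. Two disjoint nonempty ideals cannot coexist in a semigroup—any product $ab$ with $a$ in one and $b$ in the other lies in their intersection—so $S$ has a unique $\mathcal{J}$-class and is simple. Combined with primitivity, $S$ is completely simple. (Alternatively, one may split off the non-periodic case and quote Proposition~\ref{non per cr} directly, handling only the periodic subcase by the antichain argument above.)

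To transfer homogeneity, let $\phi : T_1 \to T_2$ be a unary isomorphism between finitely generated completely simple subsemigroups of $S$, with a finite unary generating set $X$ of $T_1$. Form $T_0 = \langle X \rangle_S$, the semigroup subsemigroup generated by $X$. Then $T_0$ is finitely generated as a semigroup, and $\phi|_{T_0}$ is a semigroup isomorphism onto $\langle \phi(X) \rangle_S$, which by the semigroup homogeneity of $S$ extends to an automorphism $\theta \in \operatorname{Aut}(S)$. Since Green's relations and maximal subgroup structure are defined algebraically, $\theta$ automatically preserves the relative-inverse operation and is thus a unary automorphism of $S$. Both $\theta|_{T_1}$ and $\phi$ are unary morphisms on $T_1 = \langle X \rangle$ agreeing on the generating set $X$, so they coincide on all of $T_1$. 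The main obstacle I expect is the simplicity step, specifically the passage from ``the $\mathcal{J}$-classes form an antichain'' to ``each $\mathcal{J}$-class is a two-sided ideal'' and the subsequent disjoint-ideals contradiction; once this is secured, both complete simplicity and the unary-to-semigroup transfer follow by routine bookkeeping with generators.
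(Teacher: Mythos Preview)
Your proof is correct. The two parts differ from the paper's in instructive ways.

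For complete simplicity, the paper proceeds exactly as you do to show that every idempotent is primitive (finiteness of $E(S)$ gives a primitive idempotent, and transitivity of $\operatorname{Aut}(S)$ on $E(S)$ transports primitivity to all of $E(S)$), but then simply invokes the standard fact that a regular semigroup without zero in which every idempotent is primitive is completely simple (this is \cite[Theorem~3.3.3]{Howie94}, already cited in Proposition~\ref{non per cr}). Your $\mathcal{J}$-class argument is correct---the antichain step, the identification $S^1aS^1=J_a$, and the disjoint-ideals contradiction all go through---but it is reproving that theorem in situ.

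For the transfer of homogeneity your approach is genuinely different and in fact more robust than the paper's. The paper appeals to Lemma~\ref{periodic same}, which only asserts the equivalence of the two notions of homogeneity for \emph{periodic} completely simple semigroups; yet nothing in the hypotheses forces $S$ to be periodic (a non-torsion group with its single idempotent is a candidate). Your generating-set argument sidesteps this: restricting the unary isomorphism $\phi$ to the semigroup-generated $T_0=\langle X\rangle_S$, extending by semigroup homogeneity, and then using that two unary morphisms agreeing on a unary generating set coincide, establishes the general implication ``homogeneous as a semigroup $\Rightarrow$ homogeneous as a completely simple semigroup'' for arbitrary completely simple $S$, periodic or not. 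This is precisely the assertion the paper makes informally at the start of the section (that semigroup homogeneity is the stronger condition), and your argument supplies the missing justification.
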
 

\begin{proof}  Let $e\in E=E(S)$. Since $E$ is finite, there exists a primitive idempotent $f$ under the partial order $\leq$ on $E$. Then by the homogeneity of $S$  there exists an automorphism $\theta$ of $S$ such that $e\theta =f$. If there exists $g\in E$ such that $g\leq f$, then $g\theta\leq e \theta=f$, and so $g=f$ as $f$ is primitive. Hence all idempotents are primitive, and so $S$ is completely simple. The result then follows from Lemma \ref{periodic same}. 
\end{proof} 
 
 A consequence of the lemma above together with Theorem \ref{thm: classify} is that we now have a full classification of all finite regular homogeneous semigroups (where in Theorem \ref{thm: classify} the group $G$ is forced to be finite, and case (5) cannot hold).

 Note that if we drop the condition that $S$ is regular then the lemma no longer holds. Indeed, it is a simple exercise  to check that the monogenic semigroup $\langle a: a^4=a^2 \rangle_S$ is homogeneous, but not completely simple.

\section{Acknowledgements} 

The author would like to thank  James Mitchell for creating the vital GAP code with astonishing speed. Thanks also to Brennen Fagan  for tirelessly helped me  with the programming, and to Victoria Gould for her helpful comments. 
%I thank Victoria Gould for carefully reading the paper
%and correcting mistakes.

\end{document}